\pdfoutput=1
\RequirePackage{ifpdf}
\ifpdf 
\documentclass[pdftex]{sigma}
\else
\documentclass{sigma}
\fi

\numberwithin{equation}{section}

\newtheorem{Theorem}{Theorem}[section]
\newtheorem{Corollary}[Theorem]{Corollary}
\newtheorem{Conjecture}[Theorem]{Conjecture}
\newtheorem{Lemma}[Theorem]{Lemma}
\newtheorem{Proposition}[Theorem]{Proposition}
 { \theoremstyle{definition}
\newtheorem{Definition}[Theorem]{Definition}

\newtheorem{Example}[Theorem]{Example}
\newtheorem{Remark}[Theorem]{Remark} }

\usepackage{tikzit}

\tikzstyle{label}=[inner sep=0.1mm, font={\footnotesize}]
\tikzstyle{circ}=[fill=black, draw=black, shape=circle, line width=0.25mm, tikzit shape=circle, inner sep=0.35mm]
\tikzstyle{nodes}=[fill=white, draw=none, shape=circle, inner sep=0.7pt, font={\scriptsize}, minimum size=11pt]
\tikzstyle{label-2}=[fill=none, draw=none, shape=circle, font={\footnotesize}]

\tikzstyle{hyper}=[-, fill={rgb,255: red,48; green,255; blue,214}, fill opacity=0.6, draw={rgb,255: red,18; green,229; blue,85}, tikzit fill={rgb,255: red,48; green,255; blue,214}]
\tikzstyle{hyper1}=[-, fill={rgb,255: red,230; green,138; blue,9}, draw={rgb,255: red,255; green,128; blue,0}, fill opacity=0.5]
\tikzstyle{hyper2}=[-, fill={rgb,255: red,128; green,179; blue,255}, draw={rgb,255: red,46; green,87; blue,115}, fill opacity=0.55]
\tikzstyle{new edge style 0}=[-, fill={rgb,255: red,245; green,255; blue,39}, draw={rgb,255: red,168; green,170; blue,22}, fill opacity=0.6]
\tikzstyle{blue}=[-, draw={rgb,255: red,49; green,149; blue,255}, line width=1.5pt]
\tikzstyle{red}=[-, draw=red, line width=1.5pt]
\tikzstyle{blue thick}=[-, tikzit draw=blue, draw=blue, line width=1.7pt]

\usepackage{breqn}
\usetikzlibrary{shapes.geometric,intersections,decorations.markings}
\usetikzlibrary{calc,intersections,through,backgrounds}
\usetikzlibrary{patterns}

\newlength\friezelen
\settowidth{\friezelen}{$\xi_{m}$} 

\usepackage{array}
\newcolumntype{Q}{>{\centering}p{\friezelen}<{}}

\usepackage{mathtools}
\usetikzlibrary{arrows}
\tikzset {->-/.style={decoration={markings, mark=at position .5 with {\arrow{latex}}}, postaction={decorate}}}

\usepackage{etoolbox}

\DeclareMathOperator{\wt}{wt}

\newcommand{\new}[1]{{#1}}

\newcommand{\calS}{\mathcal{S}}
\newcommand{\calC}{\mathcal{C}}
\newcommand{\Z}{\mathbb{Z}}

\setcounter{MaxMatrixCols}{11}

\begin{document}
\allowdisplaybreaks

\newcommand{\arXivNumber}{2107.14785}

\renewcommand{\PaperNumber}{089}

\FirstPageHeading

\ShortArticleName{Rooted Clusters for Graph LP Algebras}

\ArticleName{Rooted Clusters for Graph LP Algebras}

\Author{Esther BANAIAN~$^{\rm a}$, Sunita CHEPURI~$^{\rm b}$, Elizabeth KELLEY~$^{\rm c}$ and Sylvester W.~ZHANG~$^{\rm d}$}

\AuthorNameForHeading{E.~Banaian, S.~Chepuri, E.~Kelley and S.W.~Zhang}

\Address{$^{\rm a)}$~Department of Mathematics, Aarhus University, 8000 Aarhus, Denmark}
\EmailD{\href{mailto:banaian@math.au.dk}{banaian@math.au.dk}}

\Address{$^{\rm b)}$~Department of Mathematics, Lafayette College, Easton, PA 18042, USA}
\EmailD{\href{mailto:chepuris@lafayette.edu}{chepuris@lafayette.edu}}
\Address{$^{\rm c)}$~Department of Mathematics, University of Illinois Urbana-Champaign,\\
\hphantom{$^{\rm c)}$}~Urbana, IL 61801, USA}
\EmailD{\href{mailto:kelleye@illinois.edu}{kelleye@illinois.edu}}
\Address{$^{\rm d)}$~School of Mathematics, University of Minnesota, Minneapolis, MN 55455, USA}
\EmailD{\href{mailto:swzhang@umn.edu}{swzhang@umn.edu}}

\ArticleDates{Received October 13, 2021, in final form November 17, 2022; Published online November 24, 2022}

\Abstract{LP algebras, introduced by Lam and Pylyavskyy, are a generalization of cluster algebras. These algebras are known to have the Laurent phenomenon, but positivity remains conjectural. Graph LP algebras are finite LP algebras encoded by a graph. For the graph LP algebra defined by a tree, we define a family of clusters called \emph{rooted clusters}. We prove positivity for these clusters by giving explicit formulas for each cluster variable. We also give a combinatorial interpretation for these expansions using a generalization of $T$-paths.}

\Keywords{Laurent phenomenon algebra; cluster algebra; graph LP algebra; $T$-path}

\Classification{05E15; 05C70}

\section{Introduction}\label{sec:intro}

Cluster algebras were introduced by Fomin and Zelevinsky~\cite{FZ-02}. Though the original motivation for these objects was the study of total positivity, they have since been found to have connections to a wide variety of mathematical areas, including the representation theory of quivers \cite{BMRRT-06, K-10, R-10}, algebraic geometry and mirror symmetry \cite{B-17,GHK-15}, discrete integrable systems \cite{FH-13, K-10-II}, Poisson geometry \cite{GSV-05,GSV-10}, Teichm\"uller theory \cite{FG-09, FG-10, FT-18, GSV-05}, other areas of combinatorics \cite{BMDKTY-18,C-04, CFZ-02,IT-09}, and mathematical physics \cite{C-04,F-12}.

Cluster algebras are commutative rings with a family of distinguished generators called \emph{cluster variables}. The cluster variables occur in overlapping subsets of fixed size called \emph{clusters}. Given a cluster $\calC$, we can obtain a unique distinct cluster $\calC'$ by a process called \emph{mutation} where one cluster variable in $\calC$ is replaced with a different cluster variable. The two cluster variables involved in this process are related by a \emph{binomial exchange relation}; that is, their product can be expressed as a binomial in terms of the other variables in $\calC$ (or, equivalently, in $\calC'$).

Cluster algebras have several important features, including that:
\begin{enumerate}\itemsep=0pt
 \item[(1)] \emph{Laurent phenomenon.} Given a fixed choice of cluster $\calC=(x_1,\dots,x_n)$, every cluster vari\-able can be written as a Laurent polynomial in $x_1,\dots,x_n$.
 \item[(2)] \emph{Positivity.} The Laurent polynomial in (1) has positive coefficients.
\end{enumerate}

Lam and Pylyavskyy introduced Laurent phenomenon (LP) algebras as a generalization of cluster algebras~\cite{LP-12}. In an LP algebra, the restriction that exchange relations be binomial is relaxed to allow arbitrary irreducible polynomials. The precise definition of these algebras is reviewed in Section~\ref{sec:LP}. Lam and Pylyavskyy proved that the Laurent phenomenon holds for LP algebras and conjectured that positivity holds as well.

Graph LP algebras are a particularly nice class of LP algebras whose exchange relations can be encoded in a graph. Lam and Pylyavskyy explored graph LP algebras in depth in~\cite{LP-16} and gave simple descriptions of all of the clusters along with several formulas for computing the cluster variables. However, positivity for graph LP algebras remains conjectural. In this paper, we describe progress towards that conjecture.

We begin by introducing \emph{rooted clusters} for graph LP algebras. Our first main result, which we prove by giving explicit formulas for every cluster variable in terms of each rooted cluster, is positivity for such clusters.

\begin{Theorem}\label{thm:main1}
If $\Gamma$ is a tree and $\calC$ is a rooted cluster for $\Gamma$, then every cluster variable in the graph LP algebra associated to $\Gamma$ can be expressed as a Laurent polynomial with positive coefficients in the elements of $\calC$.
\end{Theorem}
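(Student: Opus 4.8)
The plan is to prove the theorem constructively: for a tree $\Gamma$ and a fixed rooted cluster $\calC$, write down an explicit candidate Laurent expression for every cluster variable and verify by induction that it is correct. First I would recall from \cite{LP-16} the combinatorial description of the clusters of the graph LP algebra of $\Gamma$ and, in particular, make precise which clusters are the \emph{rooted} ones; the natural guess is that a rooted cluster corresponds to a choice of root vertex (or a choice of orientation of $\Gamma$ turning it into a rooted tree), so that each cluster variable is indexed by a vertex $v$ of $\Gamma$ and the exchange relations are organized along edges directed away from (or toward) the root. Having fixed this, the exchange relations of a graph LP algebra have a uniform shape: the relation producing the variable at $v$ involves only the variables at the neighbors of $v$ and the ground-ring coefficients attached to $v$, so on a tree the dependency graph is itself a tree and one can propagate outward from the root.

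The key steps, in order, would be: (1) set up notation for the rooted tree, the ground ring, and the normalized exchange polynomials $F_v$ of \cite{LP-16}, recording the precise binomial/irreducible form each $F_v$ takes for a graph LP algebra; (2) for each vertex $v$, define the proposed Laurent polynomial $P_v$ in the elements of $\calC$ — I expect this to be a sum over certain subsets of the path from the root to $v$ (or over ``compatible'' subforests), with all coefficients manifestly in $\{0,1\}$ or at least positive, which immediately gives positivity once correctness is established; (3) prove by induction on the distance from the root that $P_v$ equals the cluster variable $x_v$, the base case being the cluster $\calC$ itself and the inductive step amounting to checking that the $P_v$ satisfy the defining exchange relation — i.e., substituting the $P$'s into the binomial exchange relation and simplifying yields an identity of Laurent polynomials; (4) deduce Theorem~\ref{thm:main1} by noting that every cluster variable of a finite graph LP algebra arises this way, since the clusters are exhausted by the mutation moves described in \cite{LP-16}. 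The later sections' combinatorial model of generalized $T$-paths would then be introduced precisely to repackage the subsets appearing in $P_v$, but that is not needed for the bare positivity statement.

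The main obstacle I anticipate is step (3), specifically the algebraic identity verifying that the proposed formulas are compatible with \emph{every} exchange relation, not just those along the rooted orientation. In an LP algebra the exchange polynomials are not plain binomials in the naive sense — they come with the ``$\hat{F}$'' normalization in which one divides out common factors and multiplies by compensating monomials in the other cluster variables (this is the subtlety that makes LP mutation well-defined). So when I mutate at a vertex $v$ the relation actually reads $x_v x_v' = \hat F_v$ where $\hat F_v$ is obtained from $F_v$ by a Laurent-monomial twist depending on how the other $F_w$ factor; controlling these twists on a tree — showing the relevant common factors are exactly accounted for by the structure of $\calC$ and that no unexpected denominators appear — is the delicate bookkeeping at the heart of the argument. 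A secondary difficulty is confirming that the family of rooted clusters is closed under the mutations needed to reach all cluster variables, or else that the non-rooted clusters still have their variables covered by the formulas; I would handle this by showing any cluster variable lies in some rooted cluster's ``reachable set,'' using the tree structure to reduce to the one-step case. Once the identity in step (3) is in hand, positivity is automatic because the coefficients of each $P_v$ were chosen nonnegative from the outset, and the $T$-path interpretation in the subsequent section simply gives a bijective reading of those coefficients.
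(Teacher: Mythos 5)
There is a genuine gap, and it starts with the indexing of the cluster variables. In a graph LP algebra the cluster variables are not in bijection with the vertices: by Lam--Pylyavskyy's classification (Theorem~\ref{thm:graph-LP-defn}) they are the $X_i$ for vertices $i$ together with $Y_S$ for every \emph{connected subset} $S$, and the rooted cluster $\calC_v$ consists entirely of the $Y$-variables $Y_{I_x}$ for the down-sets $I_x=\Gamma^v_{\leq x}$. Your plan of ``one candidate Laurent polynomial $P_v$ per vertex, propagated outward from the root'' therefore only accounts for the $n$ variables $X_i$ (and for these it is essentially what the paper does: Lemma~\ref{lem:X-formula-1} applies the exchange relation of Proposition~\ref{prop:exchange-rels}(a) with $S=\Gamma^v_{<i}$, and Proposition~\ref{prop:X-formula-2} propagates it by induction on distance from the root). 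It says nothing about the infinitely-many-flavored family $Y_S$ with $|S|\geq 2$, which is where all the work is. Your closing worry about whether every cluster variable is ``reachable'' is a non-issue -- the classification already tells you exactly what the cluster variables are -- but your proposed fix (showing each variable lies in \emph{some} rooted cluster's reachable set) would not even yield the statement, which demands an expansion in one \emph{fixed} rooted cluster $\calC$.

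The deeper problem is that for the $Y_S$ your verification strategy -- substitute the candidate formulas into the exchange relations and simplify -- cannot deliver positivity. The relevant relation, Proposition~\ref{prop:exchange-rels}(b), reads $Y_{S\oplus i}Y_{S\oplus j}=\big(Y_{Sij}Y_S+P_S^{ij}P_S^{ji}\big)/\big(Y_{S\ominus i}Y_{S\ominus j}\big)$; solving it for the new variable $Y_{Sij}$ introduces a subtraction, so subtraction-freeness is not preserved by iterating exchange relations, and ``the coefficients were chosen nonnegative from the outset'' begs the question of what the candidate formula for $Y_S$ actually is. The paper avoids this entirely: it uses the determinantal description $Y_S=\det\mathfrak{N}|_S$, expands the determinant over the tree (so permutations decompose into fixed points and transpositions along edges, Lemma~\ref{lem:Y-sum}), and then carries out a sign-cancellation argument (Lemma~\ref{lem:Y-term-containment} and the inclusion--exclusion count in the proof of Theorem~\ref{thm:Y-formula}) showing the signed sum collapses onto a manifestly positive expression. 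That cancellation is the technical heart of the proof of Theorem~\ref{thm:main1}, and nothing in your outline supplies a substitute for it.
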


We then introduce a generalization of Schiffler's $T$-paths for type $A$ cluster algebras~\cite{S-08} for our setting.

\begin{Theorem}
\label{thm:main2}
Let $\Gamma$ be a tree and $\calC$ be a rooted cluster for $\Gamma$. If $S$ is a connected subset of vertices of $\Gamma$, then the cluster variable $Y_S$ has the combinatorial expansion formula \[Y_S=\sum_{\substack{\text{complete hyper}\\T\text{-paths }\alpha\text{ for }S}}\wt(\alpha).\]
\end{Theorem}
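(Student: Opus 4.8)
The plan is to prove Theorem~\ref{thm:main2} by induction, comparing the combinatorial sum $\sum_{\alpha}\wt(\alpha)$ against the explicit Laurent expansion for $Y_S$ established in the proof of Theorem~\ref{thm:main1}. First I would set up the base case: when $S$ is a single vertex $\{v\}$, a complete hyper $T$-path for $S$ should reduce to a trivial path, and the right-hand side should collapse to the closed formula for $Y_{\{v\}}$ in terms of $\calC$ (either $Y_v$ itself, if $v$ is a root, or the one-step exchange expression otherwise). I would check the weight function is normalized so that this matches on the nose.

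For the inductive step I would fix a connected $S$ with $|S|\geq 2$ and pick a leaf vertex $w$ of the subtree induced on $S$ (using that $\Gamma$ is a tree, so the induced subgraph on any connected $S$ is again a tree), with $w'$ its unique neighbor in $S$. The key is to find a combinatorial recursion on the set of complete hyper $T$-paths for $S$ that mirrors the algebraic exchange relation relating $Y_S$, $Y_{S\setminus\{w\}}$, and the smaller clusters/variables attached at $w$. Concretely, I expect to classify hyper $T$-paths for $S$ according to how they ``enter'' the vertex $w$ — which hyperedge incident to $w$ the path uses near its end, or whether it backtracks — and show that summing $\wt(\alpha)$ over each class factors as (local weight at $w$) $\times$ $\sum_{\beta}\wt(\beta)$ where $\beta$ ranges over complete hyper $T$-paths for a strictly smaller connected set. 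Summing over the classes then reproduces exactly the exchange relation, and the inductive hypothesis finishes the equality with $Y_S$.

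The main obstacle I anticipate is the bookkeeping in the inductive step: unlike Schiffler's original $T$-paths, where paths in a triangulated polygon break cleanly at a diagonal, here the ``hyperedges'' carry more structure, and the decomposition of a hyper $T$-path for $S$ at the leaf $w$ may not be a clean disjoint union — a single hyper $T$-path might restrict to several pieces, or a piece that is legal for $S$ might fail to be a \emph{complete} hyper $T$-path for the smaller set without a correction term. I would need the precise definition of ``complete hyper $T$-path'' (given earlier in the paper) to guarantee that the restriction map is well-defined and that the fibers are in weight-preserving bijection with products of path sets over the smaller sets, possibly after introducing a sign-free telescoping or grouping trick to absorb would-be overcounting. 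A secondary technical point is verifying that the weight $\wt(\alpha)$ is multiplicative under this decomposition — i.e., that the local contributions at $w$ multiply correctly with the global weight of $\beta$ — which should follow from how $\wt$ is defined as a product over the steps of the path, but must be checked against the denominators appearing in the Laurent expansion from Theorem~\ref{thm:main1}. Once the recursion is matched term-by-term with the algebraic exchange relation, the theorem follows.
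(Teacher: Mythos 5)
Your overall skeleton---induct on $|S|$, peel off a leaf $w$ of the induced subtree, decompose each complete hyper $T$-path at $w$, and match weights against an algebraic identity---is close in spirit to the paper's argument (Theorem~\ref{thm:CanPeelSingleton} peels off a minimal element of $S$ and pastes). But there are two concrete gaps. First, the base case is not trivial, and you have the dichotomy backwards: $Y_{\{x\}}$ equals the cluster variable $Y_{I_x}$ precisely when $x$ is a \emph{minimal} element of the rooted tree (so that $I_x=\{x\}$), not when $x$ is the root; for every other $x$, including the root, there are $|\Gamma_{\lessdot x}^v|+1$ distinct complete hyper $T$-paths ($T_x^+$ and one $T_x^{c_i}$ per child $c_i$), and establishing that these are \emph{all} of them (Theorem~\ref{thm:TPathOneVertex}) requires the structural Lemmas~\ref{lem:internalnodes}, \ref{lem:evenconnections} and~\ref{lem:NoExtendedVertexEvenStep}. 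None of that is ``a trivial path,'' and the same lemmas are what make the decomposition at $w$ in the inductive step ``clean'' in the sense you worry about.

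Second, and more seriously, the algebraic identity you propose to match against does not exist in the positive form your induction needs. The natural recursion at a leaf $w$ with unique neighbor $w'\in S$ is the cofactor expansion $Y_S=Y_{\{w\}}Y_{S\setminus\{w\}}-Y_{S\setminus\{w,w'\}}$, which carries a minus sign; a term-by-term match with a manifestly positive sum over hyper $T$-paths therefore requires a cancellation argument, and that argument is exactly the content of Lemmas~\ref{lem:Y-sum} and~\ref{lem:Y-term-containment} and Theorem~\ref{thm:Y-formula}. The paper avoids your step-by-step matching entirely: it first proves the closed positive formula of Theorem~\ref{thm:Y-formula} (a sum over pairs $(O,u)$), then shows that complete hyper $T$-paths for $S$ are exactly the compatible pastings of singleton hyper $T$-paths, choosing $T_x^{+}$ for $x\in O$ and $T_x^{u(x)}$ for $x\in S\setminus O$, with the pasting-compatibility condition $u(x)\notin O$ reproducing the index set of that formula and with weights multiplying under pasting. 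Your ``sign-free telescoping or grouping trick'' is precisely the missing ingredient, and supplying it would amount to reproving Theorem~\ref{thm:Y-formula}; as written, the inductive step does not close.
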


We will begin in Section~\ref{sec:LP} by giving more background on LP algebras and then specifically graph LP algebras. In Section~\ref{sec:rooted-clusters}, we introduce rooted clusters. Section~\ref{sec:formulas} gives formulas for the cluster variables in terms of a rooted cluster $\calC$ and contains the proof of Theorem~\ref{thm:main1}. We begin Section~\ref{sec:T-path} with background on $T$-paths for type $A$ cluster algebras and then define hyper $T$-paths. This section culminates with the proof of Theorem~\ref{thm:main2}. We conclude with a few thoughts about future work.

\section{Preliminaries}\label{sec:prelim}

\subsection{LP algebras}\label{sec:LP}

LP algebras were defined by Lam and Pylyavskyy in~\cite{LP-12}. We state the full definition of LP algebras in this section for the sake of completeness, but this paper will focus on a particular subset: graph LP algebras. Our initial definition of graph LP algebras, in Definition~\ref{defn:graph-LP}, uses the following definition of an LP algebra, but we will later give an equivalent and simpler definition in Theorem~\ref{thm:graph-LP-defn}.

\begin{Definition}[{cf.\ \cite[Section~2.1]{LP-12}}]
\label{defn:LP-alg}
Let $R$ be a coefficient ring over $\Z$ that is a unique factorization domain. Let $\mathcal{F}$ be the field of rational functions in $X_1,\dots,X_n$ over $\text{Frac}(R)$ where $X_1,\dots,X_n$ are indeterminates. A \emph{seed} is a collection $\{(x_i,F_i)\}_{1\leq i\leq n}$ where
\begin{itemize}\itemsep=0pt
 \item $x_1,\dots,x_n$ is a transcendence basis for $\mathcal{F}$ over $\text{Frac}(R)$.
 \item $F_1,\dots, F_n$ are polynomials in $R[x_1,\dots,x_n]$ such that
 \begin{enumerate}\itemsep=0pt
 \item[(LP1)] $F_i$ is irreducible in $R[x_1,\dots,x_n]$ and is not divisible by any variable $x_j$, and
 \item[(LP2)] $F_i$ does not involve $x_i$.
 \end{enumerate}
\end{itemize}
The individual elements $x_1, \dots, x_n$ are known as \emph{cluster variables}, the entire set $\{ x_1, \dots, x_n \}$ as a \emph{cluster}. The functions $F_1,\dots,F_n$ are the \emph{exchange polynomials}. The \emph{rank} of the seed $\{ (x_i, F_i) \}_{1 \leq i \leq n}$ is $n$.
\end{Definition}
Given a seed $t = \{(x_i,F_i)\}_{1\leq i\leq n}$, let $ \mathcal{L} = \mathcal{L}(t) := R\big[x_1^{\pm 1}, \dots, x_n^{\pm 1}\big]$. The LP algebra associated to the seed $t$ lives within this Laurent polynomial ring. Precisely defining this LP algebra, however, requires some additional framework.

For each seed, the collection of exchange polynomials defines a collection of \emph{exchange Laurent polynomials} $\{ \hat{F_1}, \dots, \hat{F_n} \}$ such that
\begin{itemize}\itemsep=0pt
 \item $\hat{F_i} := x_1^{a_1} \cdots x_{i-1}^{a_{i-1}}x_{i+1}^{a_{i+1}}\cdots x_n^{a_n}F_i$ for some $a_1, \dots, a_{i-1},a_{i+1},\dots,a_n \in \mathbb{Z}_{\leq 0}$, and
 \item for $i \neq j$, $\hat{F_i}\big|_{x_j \leftarrow F_j/x}$ lies in $R\big[x_1^{\pm 1}, \dots, x_{j-1}^{\pm 1},x^{\pm 1}, x_{j+1}^{\pm 1}, \dots, x_n^{\pm 1}\big]$ and is not divisible by $F_j$ (as an element of this ring).
\end{itemize}
The collection of exchange Laurent polynomials $\big\{ \hat{F_1}, \dots, \hat{F_n} \big\}$ is well-defined and is uniquely determined by the original collection of exchange polynomials $\{ F_1,\dots,F_n \}$. The exchange Laurent polynomials allow us to state a definition of \emph{mutation}.

\begin{Definition}
For a seed $t = \{ (x_i,F_i) \}_{1 \leq i \leq n}$, \emph{mutation in direction $k$} produces a new seed $t' = \mu_k(\{ (x_i,F_i) \}_{1 \leq i \leq n}) = \{ (x_i',F_i') \}_{1 \leq i \leq n}$ where the new cluster variables are given by the exchange relation
 \[
 x_i' := \begin{cases} \hat{F_k}/x_k, & i = k, \\ x_i, & i \neq k. \end{cases}
 \]
The new exchange polynomials are determined according to the following cases:
 \begin{itemize}\itemsep=0pt
 \item $F_k' := F_k$.
 \item If $i \neq k$ and $F_i$ does not depend on $x_k$, then define $F_i'$ as any polynomial which satisfies $F_i' \ltimes F_i$ (i.e., $F_i'$ and $F_i$ differ multiplicatively by a unit in $R$), where $F_i'$ is now considered as an element of $\mathcal{L}' = \mathcal{L}(t')$.
 \item If $i \neq k$ and $F_i$ does depend on $x_k$, then define
 \[ G_i := F_i\big|_{x_k \leftarrow (\hat{F_k}|_{x_i \leftarrow 0})/x_k'} \]
 and define $H_i$ as $G_i$ with all common factors with $ \hat{F_k}\big|_{x_i \leftarrow 0}$ removed. Note that $H_i$ is only defined up to multiplication by a unit in $R$. Now, define
 \[ F_i' := MH_i, \]
 where $M$ is a Laurent monomial in $x_1', \dots, x_{i-1}', x_{i+1}', \dots, x_n'$ whose coefficient is a unit in $R$, such that $F_i'$ satisfies (LP2) and is not divisible by any variable in $R[x_1',\dots,x_n']$. Such a monomial always exists, but there may be many choices for the coefficient of $M$. Therefore, $F_i'$ is defined only up to multiplication by a unit in~$R$.
 \end{itemize}
\end{Definition}
One can verify that mutation produces a collection which meets the definition of a seed; for details, see \cite[Section~2.2]{LP-12}. Note that this definition of mutation is not exactly involutive, since mutation is non-deterministic. If we obtain the seed $t'$ by mutating the seed $t$ in direction~$k$, however, it is always possible to recover $t$ by mutating~$t'$ in direction~$k$.

\begin{Definition}
A \emph{Laurent phenomenon algebra} $(\mathcal{A}, \mathcal{S})$ consists of a collection of seeds $\mathcal{S}$ and a~subring $\mathcal{A} \subset \mathcal{F}$ generated by all the cluster variables which appear in seeds in~$\mathcal{S}$. The collection~$\mathcal{S}$ must satisfy the following conditions.
\begin{itemize}\itemsep=0pt
 \item Any pair of seeds in $\mathcal{S}$ can be obtained from each other via a sequence of mutations.
 \item For any seed $(x_i,F_i) \in \mathcal{S}$ and direction $k \in [n]\new{:=\{1,2,\dots,n\}}$, there exists another seed $(x_i',F_i') \in \mathcal{S}$ which can be obtained by mutating $(x_i,F_i)$ in direction $k$.
\end{itemize}
\end{Definition}

\subsection{Graph LP algebras}\label{sec:graph-LP}

For every undirected graph $\Gamma$, we obtain a graph LP algebra $\mathcal{A}_\Gamma$. The initial seed for $\mathcal{A}_\Gamma$ is encoded by the edges of the graph.

\begin{Definition}\label{defn:graph-LP}
Let $\Gamma$ be an undirected graph on $[n]$ and $R=\Z[A_1,\dots,A_n]$. Then the \emph{graph LP algebra} $\mathcal{A}_\Gamma$ is the LP algebra generated by initial seed $ \big\{\big(X_i,A_i+\sum_{i\text{ adjacent to }j}X_j\big)\big\}_{1\leq i\leq n}$.
\end{Definition}

\begin{Remark}There is a similar definition of LP algebras from directed graphs, described in~\cite{LP-16}, for which the results of Lam and Pylyavskyy discussed later in this section still hold.
\end{Remark}

Lam and Pylyavskyy prove that these LP algebras have a particularly nice structure using nested collections.

\begin{Definition}\label{defn:nested-coll}
Let $\Gamma$ be an undirected graph on $[n]$. A family of subsets of $[n]$, $\calS=\{S_1,\dots,S_k\}$, is a \emph{nested collection} if
\begin{itemize}\itemsep=0pt
 \item for any $i,j\leq k$, either $S_i\subseteq S_j$, $S_j\subseteq S_i$, or $S_i\cap S_j=\varnothing$, and
 \item if $S_{i_1},\dots S_{i_\ell}$ are pairwise disjoint, then $S_{i_1},\dots S_{i_\ell}$ are exactly the connected components of $\bigcup_{j=1}^\ell S_{i_j}$.
\end{itemize}
We say $\calS$ is a \emph{maximal nested collection on $S$} if $\bigcup_{i=1}^k S_i=S$ and there is no $S'\subseteq S$ such that $\{S_1,\dots,S_k,S'\}$ is a nested collection.
\end{Definition}

If $\Gamma$ is a graph on $[n]$ and $\calS$ is a maximal nested collection on $S=[n]$, we will generally say that $\calS$ is a maximal nested collection without specifying $S$.

\begin{Example}\label{ex:nested-coll}
Let $\Gamma$ be the following graph:
\begin{center}
\begin{tikzpicture}
\node at (-1,0.3) {1};
\node at (0,0.3) {2};
\node at (1,0.3) {3};
\node at (2,0.8) {4};
\node at (2,-0.8) {5};
\draw [line width=0.25mm, fill=black] (-1,0) circle (0.75mm);
\draw [line width=0.25mm, fill=black] (0,0) circle (0.75mm);
\draw [line width=0.25mm, fill=black] (1,0) circle (0.75mm);
\draw [line width=0.25mm, fill=black] (2,0.5) circle (0.75mm);
\draw [line width=0.25mm, fill=black] (2,-0.5) circle (0.75mm);
\draw (-1,0) -- (0,0)
(0,0) -- (1,0)
(1,0) -- (2,0.5)
(1,0) -- (2,-0.5);
\end{tikzpicture}
\end{center}

Then $\calS=\{\{1\},\{3\},\{1,2,3,4\}\}$ is a nested collection on $S=\{1,2,3,4\}$. However, it is not maximal because adding the set $S'=\{1,2,3\}$ still yields a nested collection.

As a nonexample, consider $\calS=\{\{1\},\{1,2\},\{3\},\{1,2,3,4\}\}$. We can see that this is not a nested collection by looking at the disjoint sets $\{1,2\}$ and $\{3\}$. The union of these sets is $\{1,2,3\}$, which has only one connected component.
\end{Example}

\begin{Theorem}[{\cite[Theorem 1.1]{LP-16}}]\label{thm:graph-LP-defn}
Let $\Gamma$ be an undirected graph on $[n]$. Define the matrix $\mathfrak{N}=(\mathfrak{n}_{ij})$ by \[\mathfrak{n}_{ij}=\begin{cases}
{\displaystyle \new{\frac{ A_i+\sum_{i\text{ adjacent to }k}X_k}{X_i}}}, & i=j, \\
-1, & i\text{ adjacent to }j, \\
0, & \text{otherwise}.
\end{cases}\]
Then the graph LP algebra $\mathcal{A}_\Gamma$ has cluster variables $\{X_1,\dots,X_n\} \cup \{Y_S\, |\, S\subset[n]\text{ is connected}\}$ where $Y_S$ is the determinant of the submatrix of $\mathfrak{N}$ obtained by taking only rows and columns indexed by~$S$. The clusters for $\new{\mathcal{A}_\Gamma}$ are of the form $\{X_{i_1},\dots,X_{i_k}\}\cup\{Y_S\, |\, S\in \calS\}$ where $\calS$ is a~maximal nested collection on $[n]\setminus\{i_1,\dots,i_k\}$.
\end{Theorem}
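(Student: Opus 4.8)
The plan is to build the seed set of $\mathcal A_\Gamma$ outward from its initial seed, show that the family of seeds of the asserted shape is closed under mutation and contains the initial seed, and then use the fact (built into the definition of an LP algebra) that the seed set is exactly the mutation-closure of any one seed. The real work is carrying the exchange polynomials along precisely enough that the induction closes and forces the cluster variables to be the stated minors of $\mathfrak N$.

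I would begin with the initial seed $\{(X_i,F_i)\}$, $F_i=A_i+\sum_{i\sim j}X_j$, and show its exchange Laurent polynomials require no correction monomial, i.e.\ $\hat{F_i}=F_i$. This is immediate from the compatibility condition of Section~\ref{sec:LP}: substituting $x_j\leftarrow F_j/x$ into $F_i$, clearing the single power of $x$, and using that each $F_i$ is affine-linear in every variable and that the $A_k$ are algebraically independent over $\Z$, one checks the result is not divisible by $F_j$. Hence mutating at $k$ replaces $X_k$ by $F_k/X_k=\mathfrak n_{kk}=Y_{\{k\}}$, and running the mutation rule for exchange polynomials shows the new ones are again of ``minor type'': $F_j'=F_j$ for $j$ not adjacent to $k$, and $F_j'=X_jY_{\{j,k\}}$ (up to a unit of $R$; this is a polynomial, since $Y_{\{j,k\}}$ carries $X_j$ in its denominator) for $j\sim k$. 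This is the behaviour to be propagated.

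The core is an induction on the number of mutations from the initial seed. The inductive claim: every reachable seed has the form $\{(X_i,F_i^{I})\}_{i\in I}\cup\{(Y_S,G_S)\}_{S\in\calS}$ with $I\subseteq[n]$ and $\calS$ a maximal nested collection on $[n]\setminus I$, and the exchange polynomials $F_i^{I},G_S$ are prescribed by a fixed formula in the minors of $\mathfrak N$ (each is, up to a unit of $R$, the numerator of an explicit product/ratio of $Y_T$'s determined by how $i$, resp.\ $S$, sits relative to $\calS$, where for disconnected $T$ one sets $Y_T=\prod_C Y_C$ over the components $C$ — valid because off-diagonal entries of $\mathfrak N$ vanish between distinct components, so $\mathfrak N|_T$ is block-diagonal). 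For the inductive step I mutate in an arbitrary direction and verify two things. \emph{Combinatorics}: removing one set from a maximal nested collection leaves a collection extendable to a maximal one in exactly two ways, one being the original, giving a well-defined ``flip''; one must also handle the mutations toggling $Y_{\{i\}}\leftrightarrow X_i$, which occur precisely when $\{i\}$ is an isolated component of $[n]\setminus I$ (equivalently every neighbour of $i$ lies in $I$) via the relation $X_iY_{\{i\}}=A_i+\sum_{i\sim j}X_j$, together with the corresponding change to $I$ — all of this being a direct consequence of Definition~\ref{defn:nested-coll} (these collections are the tubings of $\Gamma$). \emph{Algebra}: the exchange polynomial delivered by the LP mutation rule must match the prescribed formula, which reduces to a short list of determinant identities for $\mathfrak N$ — Desnanot--Jacobi/Dodgson-condensation-type relations of the form $Y_SY_{S'}=Y_{S\cup S'}Y_{S\cap S'}+\prod_C Y_C$ (with the product over connected components of the appropriate set) together with cofactor-expansion identities at cut vertices, e.g.\ the continuant identity $Y_S=Y_{\{v\}}Y_{S\setminus v}-Y_{S\setminus\{v,w\}}$ when $v$ is a leaf of the induced subgraph on $S$ with neighbour $w$ — all proved by cofactor expansion and the block structure above. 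Once closure is established, the family of asserted seeds is the full seed set, so the cluster variables are exactly the $X_i$ together with the $Y_S$ for connected $S$, the identification $Y_S=\det\mathfrak N|_S$ is forced by the minor-type formulas, and the clusters are as stated; a final check that distinct pairs $(I,\calS)$ give distinct subsets of $\mathcal F$ (equivalently, that the $X_i$ and $Y_S$ are pairwise distinct rational functions) follows by specializing the $A_k$ or comparing leading terms.

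The principal obstacle is the algebraic half of the inductive step: tracking the exact exchange polynomials through LP mutation, whose non-determinism introduces unit and Laurent-monomial ambiguities, and confirming that the identities the mutation rule demands are exactly the determinant identities above — in particular in the correct ``product over components'' form when the relevant sets are disconnected. The combinatorial half — uniqueness of flips, and the claim that flips together with the $Y_{\{i\}}\leftrightarrow X_i$ moves realize every mutation — is a secondary but genuine point that must be pinned down in tandem with the algebra.
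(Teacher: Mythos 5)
This theorem is not proved in the paper: it is imported verbatim from Lam and Pylyavskyy \cite[Theorem~1.1]{LP-16}, so there is no in-paper argument to compare yours against. Measured instead against the actual proof in \cite{LP-16}, your outline follows essentially the same route: identify the cluster variables with minors of $\mathfrak{N}$, show that the family of ``minor-type'' seeds indexed by pairs $(I,\calS)$ is closed under LP mutation via Dodgson/Desnanot--Jacobi-type identities among the $Y_S$ (these are precisely the relations recorded here as Proposition~\ref{prop:exchange-rels}), and control the combinatorics through the flip structure on maximal nested collections (tubings). Two points in your sketch deserve explicit attention if you were to write it out: (i) your induction as stated only gives one containment --- that every reachable seed has the asserted form --- so you must also argue that every pair $(I,\calS)$ is actually realized, which requires connectivity of the flip-and-toggle graph on such pairs (connectivity of the graph associahedron plus the $X_i\leftrightarrow Y_{\{i\}}$ moves); and (ii) the ``product over connected components'' convention for disconnected $Y_T$ must be verified to interact correctly with the normalization of $\hat{F}_i$ at every step, which is where most of the delicacy in \cite{LP-16} lives. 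Neither is a wrong turn, but both are load-bearing and currently deferred.
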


In a slight abuse of notation, we will generally write $Y_{s_1\dots s_r}$ as shorthand for $Y_{\{s_1,\dots,s_r\}}$.

\begin{Example}\label{ex:graph-LP}
Let $\Gamma$ be the graph from Example~\ref{ex:nested-coll}. One example of a valid cluster for $\mathcal{A}_\Gamma$ is $X_2$, $Y_1$, $Y_5$, $Y_{35}$, $Y_{345}$. This is because $\{\{1\}, \{5\}, \{3,5\}, \{3,4,5\}\}$ is a maximal nested collection on $\{1,3,4,5\}$. In this case, the $\mathfrak{N}$ matrix is
\[
\mathfrak{N}=
\begin{bmatrix}
\dfrac{A_1+X_2}{X_1} & -1 & 0 & 0 & 0 \\
-1 & \dfrac{A_2+X_1+X_3}{X_2} & -1 & 0 & 0 \\
0 & -1 & \dfrac{A_3+X_2+X_4+X_5}{X_3} & -1 & -1 \\
0 & 0 & -1 & \dfrac{A_4+X_3}{X_4} & 0 \\
0 & 0 & -1 & 0 & \dfrac{A_5+X_3}{X_5} \\
\end{bmatrix}.
\]
We can use this to rewrite the $Y$-variables in our cluster. For example,
\begin{align*}
Y_{35}& =\left|
\begin{matrix}
\dfrac{A_3+X_2+X_4+X_5}{X_3} & -1 \\
-1 & \dfrac{A_5+X_3}{X_5} \\
\end{matrix}
\right|\\
& =\frac{A_3A_5+A_5X_2+A_5X_4+A_5X_5+A_3X_3+X_2X_3+X_3X_4}{X_3X_5}.
\end{align*}
\end{Example}

Lam and Pylyavskyy also completely describe the exchange relations for $\mathcal{A}_\Gamma$. Before stating these relations, we must first introduce some notation. If $S\subseteq[n]$ and $i\in[n]$, then we write
\begin{itemize}\itemsep=0pt
 \item $Si$ for $S\cup\{i\}$,
 \item $S\oplus i$ for the connected component of $Si$ that includes $i$, and
 \item $S\ominus i$ for $Si\setminus(S\oplus i)$.
\end{itemize}
For any $S\subseteq[n]$ and $i,j\in[n]$, we let $P_S^{ij} :=\sum_{p:i\to_S j}Y_{S\setminus p}$, where the summation runs over paths from~$i$ to~$j$ that contain only $i$,~$j$, and vertices in~$S$, and $S\setminus p$ denotes $S$ without the vertices used in the path~$p$.

\begin{Example}\label{ex:notation}
Consider the following graph $\Gamma$:
\begin{center}
\begin{tikzpicture}
\node at (-1,1.3) {1};
\node at (0,1.3) {3};
\node at (1,1.3) {5};
\node at (-1,-0.3) {2};
\node at (0,-0.3) {4};
\node at (1,-0.3) {6};
\draw [line width=0.25mm, fill=black] (-1,0) circle (0.75mm);
\draw [line width=0.25mm, fill=black] (0,0) circle (0.75mm);
\draw [line width=0.25mm, fill=black] (1,0) circle (0.75mm);
\draw [line width=0.25mm, fill=black] (-1,1) circle (0.75mm);
\draw [line width=0.25mm, fill=black] (0,1) circle (0.75mm);
\draw [line width=0.25mm, fill=black] (1,1) circle (0.75mm);
\draw (-1,0) -- (0,0)
(0,0) -- (1,0)
(1,0) -- (1,1)
(-1,1) -- (0,1)
(0,1) -- (1,1)
(0,0) -- (0,1)
(-1,0) -- (-1,1);
\end{tikzpicture}
\end{center}
Let $S=\{1,2,3,4\}$, $i=6$, $j=3$. Then there are three paths from $i$ to $j$:
\begin{center}
\begin{tikzpicture}
\node at (-1,1.3) {1};
\node at (0,1.3) {3};
\node at (1,1.3) {5};
\node at (-1,-0.3) {2};
\node at (0,-0.3) {4};
\node at (1,-0.3) {6};
\draw [line width=0.25mm, fill=black] (-1,0) circle (0.75mm);
\draw [line width=0.25mm, fill=black] (0,0) circle (0.75mm);
\draw [line width=0.25mm, fill=black] (1,0) circle (0.75mm);
\draw [line width=0.25mm, fill=black] (-1,1) circle (0.75mm);
\draw [line width=0.25mm, fill=black] (0,1) circle (0.75mm);
\draw [line width=0.25mm, fill=black] (1,1) circle (0.75mm);
\draw (-1,0) -- (0,0)
(0,0) -- (1,0)
(1,0) -- (1,1)
(-1,1) -- (0,1)
(0,1) -- (1,1)
(0,0) -- (0,1)
(-1,0) -- (-1,1);
\draw[line width=0.9mm] (1,0) -- (1,1)
(0,1) -- (1,1);
\end{tikzpicture}
\hspace{0.2in}
\begin{tikzpicture}
\node at (-1,1.3) {1};
\node at (0,1.3) {3};
\node at (1,1.3) {5};
\node at (-1,-0.3) {2};
\node at (0,-0.3) {4};
\node at (1,-0.3) {6};
\draw [line width=0.25mm, fill=black] (-1,0) circle (0.75mm);
\draw [line width=0.25mm, fill=black] (0,0) circle (0.75mm);
\draw [line width=0.25mm, fill=black] (1,0) circle (0.75mm);
\draw [line width=0.25mm, fill=black] (-1,1) circle (0.75mm);
\draw [line width=0.25mm, fill=black] (0,1) circle (0.75mm);
\draw [line width=0.25mm, fill=black] (1,1) circle (0.75mm);
\draw (-1,0) -- (0,0)
(0,0) -- (1,0)
(1,0) -- (1,1)
(-1,1) -- (0,1)
(0,1) -- (1,1)
(0,0) -- (0,1)
(-1,0) -- (-1,1);
\draw[line width=0.9mm] (1,0) -- (0,0)
(0,1) -- (0,0);
\end{tikzpicture}
\hspace{0.2in}
\begin{tikzpicture}
\node at (-1,1.3) {1};
\node at (0,1.3) {3};
\node at (1,1.3) {5};
\node at (-1,-0.3) {2};
\node at (0,-0.3) {4};
\node at (1,-0.3) {6};
\draw [line width=0.25mm, fill=black] (-1,0) circle (0.75mm);
\draw [line width=0.25mm, fill=black] (0,0) circle (0.75mm);
\draw [line width=0.25mm, fill=black] (1,0) circle (0.75mm);
\draw [line width=0.25mm, fill=black] (-1,1) circle (0.75mm);
\draw [line width=0.25mm, fill=black] (0,1) circle (0.75mm);
\draw [line width=0.25mm, fill=black] (1,1) circle (0.75mm);
\draw (-1,0) -- (0,0)
(0,0) -- (1,0)
(1,0) -- (1,1)
(-1,1) -- (0,1)
(0,1) -- (1,1)
(0,0) -- (0,1)
(-1,0) -- (-1,1);
\draw[line width=0.9mm] (-1,0) -- (0,0)
(0,0) -- (1,0)
(-1,1) -- (0,1)
(-1,0) -- (-1,1);
\end{tikzpicture}
\end{center}

The first of these paths goes through 5, which is not in $S$. Therefore, it will not contribute to $P_S^{ij}$. The second path only goes through 4, which is in $S$. For this path we have $S\setminus p$ is $\{1,2,3,4\}\setminus\{3,4,6\}=\{1,2\}$, so it will contribute $Y_{12}$. The third path goes through 1, 2, and 4, all of which are in $S$. For this path $S\setminus p$ is $\{1,2,3,4\}\setminus\{3,1,2,4,6\}=\varnothing$, so it will contribute $Y_{\varnothing}=1$. Thus, we find that $P_S^{ij}=Y_{12}+1$.
\end{Example}

\begin{Proposition}[{\cite[Lemmas 4.7 and 4.11]{LP-16}}]\label{prop:exchange-rels}\quad
\begin{enumerate}\itemsep=0pt
\item[$(a)$] For $i\not\in S$, \[X_iY_{S\oplus i}=\frac{\sum_{j\not\in Si}P_S^{ij}X_j+\sum_{j\in Si}P_S^{ij}A_j}{Y_{S\ominus i}}.\]
\item[$(b)$] For $i,j\not\in S$ and $i\neq j$, \[Y_{S\oplus i}Y_{S\oplus j}=\frac{Y_{Sij}Y_S+P_S^{ij}P_S^{ji}}{Y_{S\ominus i}Y_{S\ominus j}}.\]
\end{enumerate}
\end{Proposition}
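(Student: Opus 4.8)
The plan is to deduce both relations from classical determinantal identities for the matrix $\mathfrak{N}$ of Theorem~\ref{thm:graph-LP-defn}. Throughout I would write $Y_T:=\det\mathfrak{N}_T$ for \emph{every} $T\subseteq[n]$, not only connected ones; since $\mathfrak{N}$ is block-diagonal along connected components, $\det\mathfrak{N}_T$ is the product of the cluster variables $Y_C$ over the connected components $C$ of $T$ (with $Y_\varnothing:=1$), so in particular $\det\mathfrak{N}_{Si}=Y_{S\oplus i}Y_{S\ominus i}$ and every submatrix determinant of $\mathfrak{N}$ occurring below is a monomial in genuine cluster variables. The combinatorial input is a \emph{path expansion of cofactors}: writing $C^T_{ij}$ for the $(i,j)$-cofactor of $\mathfrak{N}_T$ and $a\sim b$ for adjacency in $\Gamma$, one has $C^T_{ij}=\sum_p\det\mathfrak{N}_{T\setminus p}$, where $p$ runs over all simple paths from $i$ to $j$ contained in $T$ (for $i=j$, over the single one-vertex path $\{i\}$). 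Matching this against the definition of $P_S^{ij}$ gives exactly $P_S^{ij}=C^{Sij}_{ij}$ for $i\neq j$, and $P_S^{ij}=C^{Si}_{ij}$ whenever $j\in Si$; in particular $P_S^{ii}=Y_S$.

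To prove the path expansion I would expand the relevant minor of $\mathfrak{N}_T$ as a signed sum over permutations $\pi$ of $T$ with $\pi(i)=j$ and isolate the cycle of $\pi$ through $i$: for the term to be nonzero each vertex of that cycle must map to an adjacent vertex, so the cycle traces a simple path $p$ from $i$ to $j$ inside $T$; the entries $\mathfrak{n}_{ab}=-1$ along $p$ contribute a factor $(-1)^{|p|}$ that cancels the sign of this cycle, the product over the remaining cycles sums to $\det\mathfrak{N}_{T\setminus p}$, and the cofactor sign is absorbed by the standard relation between the sign of a permutation and of its restriction. (This is the ``all paths'' case of the all-minors matrix--tree theorem; all edge weights are $+1$, so no sign survives and each path enters with coefficient $+1$.) This sign bookkeeping is the only delicate point of the whole argument; small cases such as a single edge or a path on three vertices exhibit the cancellation clearly.

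For part~$(b)$ I would apply the Desnanot--Jacobi identity to $M=\mathfrak{N}_{Sij}$ with distinguished indices $i$ and $j$, namely $\det(M)\det(M^{ij}_{ij})=\det(M^i_i)\det(M^j_j)-\det(M^i_j)\det(M^j_i)$, where $M^a_b$ denotes $M$ with row $a$ and column $b$ deleted and $M^{ab}_{ab}$ the analogous double deletion. Here $M^{ij}_{ij}=\mathfrak{N}_S$, $M^i_i=\mathfrak{N}_{Sj}$, $M^j_j=\mathfrak{N}_{Si}$, while $\det(M^i_j)\det(M^j_i)=C^{Sij}_{ij}C^{Sij}_{ji}$ because the two position signs cancel; since $\mathfrak{N}$ is symmetric we have $C^{Sij}_{ij}=C^{Sij}_{ji}$, and since reversing a path gives $P_S^{ij}=P_S^{ji}$, the path expansion identifies this product with $P_S^{ij}P_S^{ji}$. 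Substituting, using $\det\mathfrak{N}_{Si}=Y_{S\oplus i}Y_{S\ominus i}$ (and likewise for $j$), and dividing through by $Y_{S\ominus i}Y_{S\ominus j}$ gives precisely the formula in~$(b)$.

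For part~$(a)$ I would expand $\det\mathfrak{N}_{Si}$ along row $i$: its only nonzero entries are $\mathfrak{n}_{ii}$ in column $i$ and $-1$ in column $k$ for each $k\in S$ with $k\sim i$, so by the path expansion $\det\mathfrak{N}_{Si}=\mathfrak{n}_{ii}Y_S-\sum_{k\in S,\,k\sim i}P_S^{ik}$. To finish I would run the bookkeeping in reverse starting from the right-hand side of~$(a)$: the path expansion applied to $\mathfrak{N}_{Si\cup\{j\}}$, splitting off the last edge of each path, gives $P_S^{ij}=\sum_{k\in Si,\,k\sim j}C^{Si}_{ik}$ for $j\notin Si$, so the numerator $\sum_{j\notin Si}P_S^{ij}X_j+\sum_{j\in Si}P_S^{ij}A_j$ of~$(a)$ equals $\sum_{k\in Si}C^{Si}_{ik}\big(A_k+\sum_{j\sim k}X_j-\sum_{j\in Si,\,j\sim k}X_j\big)$; substituting $A_k+\sum_{j\sim k}X_j=X_k\mathfrak{n}_{kk}$ and reindexing the last double sum turns this into $\sum_{k\in Si}X_k\sum_{j\in Si}\mathfrak{n}_{kj}C^{Si}_{ij}$, which by the ``alien cofactor'' identity $\sum_{j}\mathfrak{n}_{kj}C^{Si}_{ij}=\delta_{ik}\det\mathfrak{N}_{Si}$ collapses to $X_i\det\mathfrak{N}_{Si}=X_iY_{S\oplus i}Y_{S\ominus i}$, which is $Y_{S\ominus i}$ times the left-hand side of~$(a)$. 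I expect the path-expansion lemma, together with its sign accounting, to be the only real obstacle; granting it, both parts reduce to short manipulations with Desnanot--Jacobi and the standard cofactor and adjugate identities.
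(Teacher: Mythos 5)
The paper does not actually prove this proposition --- it is imported verbatim from Lam and Pylyavskyy \cite{LP-16}, Lemmas 4.7 and 4.11 --- so there is no in-paper argument to compare yours against; what you have written is a self-contained proof of the cited result, and it is correct. The one genuinely nontrivial ingredient, the cofactor--path expansion $C^T_{ij}=\sum_p\det\mathfrak{N}_{T\setminus p}$, checks out: writing $C^T_{ij}=\sum_{\pi(i)=j}\operatorname{sgn}(\pi)\prod_{k\neq i}\mathfrak{n}_{k\pi(k)}$, the cycle of $\pi$ through $i$ has some length $\ell$ and sign $(-1)^{\ell-1}$, while the $\ell-1$ off-diagonal entries it traverses (the entry $\mathfrak{n}_{ij}$ being omitted from the product) contribute $(-1)^{\ell-1}$, so every simple path enters with coefficient $+1$ and the remaining cycles assemble into $\det\mathfrak{N}_{T\setminus p}$; note this uses no tree hypothesis, consistent with the proposition being stated for arbitrary graphs. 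Granting that lemma, the identifications $P_S^{ij}=C^{Sij}_{ij}$ and $\det\mathfrak{N}_{Si}=Y_{S\oplus i}Y_{S\ominus i}$, the Desnanot--Jacobi step for $(b)$ (with the position signs in $C_{ij}C_{ji}$ cancelling as you say), and the alien-cofactor collapse for $(a)$ all verify --- in particular I checked that the numerator of $(a)$ does reduce to $\sum_{k\in Si}X_k\sum_{j\in Si}\mathfrak{n}_{kj}C^{Si}_{ij}=X_i\det\mathfrak{N}_{Si}$. This determinantal route (path expansion of cofactors plus Dodgson condensation) is essentially the standard one and is in the same spirit as the source; the only cosmetic caveat is that you should state explicitly, as the paper implicitly does, that $Y_T$ for disconnected $T$ denotes the product of the $Y$-variables of its components.
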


\subsection{Rooted clusters}\label{sec:rooted-clusters}

For the rest of this paper, we will be focusing on the case when $\Gamma$ is a tree. In this setting, we can define a special type of cluster we call a \emph{rooted cluster} that has desirable properties. There is one rooted cluster $\calC_v$ for each vertex $v$ of $\Gamma$. In order to define this cluster, we think of $\Gamma$ as being rooted at $v$. We then think of $\Gamma$ as a poset with the root $v$ being the maximal element and cover relations given by edges of $\Gamma$. This leads us to establish the following notation:
\begin{itemize}\itemsep=0pt
 \item Notice that if $i\neq v$, then $i$ is covered by exactly one vertex. We call this vertex $i^+$.
 \item The set of elements covered by $i$ is denoted $\Gamma_{\lessdot i}^v$. Similarly we have the sets $\Gamma_{\gtrdot i}^v$, $\Gamma_{< i}^v$, $\Gamma_{> i}^v$, $\Gamma_{\leq i}^v$, and $\Gamma_{\geq i}^v$ (note that $\Gamma_{\gtrdot i}^v=\{i^+\}$ if $i\neq v$).
\end{itemize}

\begin{Definition}\label{defn:rooted-cluster}
Let $\Gamma$ be a tree on $[n]$. Make $\Gamma$ into a rooted tree by choosing a vertex $v$ to be the root. Then for each vertex $x$ in $\Gamma$, let $I_x=\Gamma_{\leq x}^v$. The \emph{rooted cluster} $\calC_v$ is $\{I_x\}_{x\in[n]}$.
\end{Definition}

We verify that a rooted cluster is a maximal nested collection.

\begin{Lemma}
Given a tree, $\Gamma$, on $[n]$ and any vertex $v \in [n]$, the rooted cluster $\calC_v$ is a maximal nested collection on $[n]$.
\end{Lemma}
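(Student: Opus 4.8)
The plan is to verify directly that $\calC_v = \{I_x\}_{x \in [n]}$ satisfies the two bullet conditions in Definition~\ref{defn:nested-coll}, and then that it is maximal. First I would record the key structural fact: since $\Gamma$ is a tree rooted at $v$, each set $I_x = \Gamma_{\leq x}^v$ is the vertex set of the subtree hanging below $x$ (including $x$), and in particular each $I_x$ is connected. Moreover, for two vertices $x, y$, exactly one of the following holds in the poset: $x \leq y$, $y \leq x$, or $x$ and $y$ are incomparable. In the first case $I_x \subseteq I_y$ (the subtree below $x$ sits inside the subtree below $y$), in the second case $I_y \subseteq I_x$, and in the third case $I_x \cap I_y = \varnothing$, because in a rooted tree the down-sets of two incomparable elements are disjoint (a common element $z$ would force both $x$ and $y$ to lie on the unique path from $z$ to $v$, hence be comparable). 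This establishes the first bullet.

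For the second bullet, suppose $I_{x_1}, \dots, I_{x_\ell}$ are pairwise disjoint; by the previous paragraph the $x_i$ are pairwise incomparable. I claim these sets are exactly the connected components of $U := \bigcup_{j=1}^\ell I_{x_j}$. Each $I_{x_j}$ is connected, so it lies in a single component of $U$; conversely I must show no edge of $\Gamma$ joins $I_{x_a}$ to $I_{x_b}$ for $a \neq b$. An edge of $\Gamma$ is a cover relation $u \lessdot u^+$; if $u \in I_{x_a}$ and $u^+ \in I_{x_b}$, then $u^+ \leq x_b$, and since $u \leq x_a$ while $u \leq u^+ \leq x_b$, the element $u$ lies below both $x_a$ and $x_b$, contradicting disjointness of $I_{x_a}$ and $I_{x_b}$. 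The remaining edge direction ($u \in I_{x_b}$, $u^+ \in I_{x_a}$) is symmetric. Hence the components of $U$ are precisely $I_{x_1}, \dots, I_{x_\ell}$, and $\calC_v$ is a nested collection. Since $I_v = \Gamma_{\leq v}^v = [n]$, we have $\bigcup_{x} I_x = [n]$.

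It remains to check maximality: there is no $S' \subseteq [n]$ with $\calC_v \cup \{S'\}$ a nested collection, unless $S' \in \calC_v$ already. The cleanest argument is a counting one: by Theorem~\ref{thm:graph-LP-defn}, every maximal nested collection on $[n]$ has exactly $n$ elements (a cluster of $\mathcal{A}_\Gamma$ consisting only of $Y$-variables has $n$ elements, being a transcendence basis of rank $n$), and $\calC_v$ already has $n$ elements $I_{x_1}, \dots, I_{x_n}$ — these are distinct because the map $x \mapsto I_x$ is injective ($x$ is the unique maximal element of $I_x$). A nested collection strictly containing a nested collection of size $n$ would have more than $n$ elements, which is impossible since any nested collection extends to a maximal one. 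Alternatively, if one prefers not to invoke the size count, one can argue directly: given any connected $S' \subseteq [n]$, let $x$ be its (unique) top element in the poset; then $S' \subseteq I_x$, and one checks that unless $S' = I_x$ the pair $\{S', I_y\}$ fails nestedness for some child $y \in \Gamma_{\lessdot x}^v$ with $S' \cap I_y \neq \varnothing$ and $S' \not\supseteq I_y$.

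I expect the main obstacle to be the maximality step, specifically making the direct argument fully rigorous without leaning on the cardinality count from Theorem~\ref{thm:graph-LP-defn}; the nested-collection axioms themselves are a routine unwinding of the tree/poset structure, but pinning down exactly why no new connected set can be adjoined requires care about which child subtrees $S'$ meets. Using the size count sidesteps this and is likely the approach I would actually write up.
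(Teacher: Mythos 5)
Your verification of the two nested-collection axioms is essentially the paper's argument, and in places you are more careful than the paper: you prove the trichotomy (comparable versus incomparable, with the ``common element forces comparability'' argument) rather than phrasing it in terms of whether the $i$--$j$ path passes through the root, and you actually verify the assertion that pairwise disjoint $I_{x_j}$'s are the connected components of their union by checking that no cover relation $u \lessdot u^+$ crosses between two of them, whereas the paper simply states this. The genuine divergence is in the maximality step. The paper argues directly: for any connected $S$ compatible with all $I_i$, take the element $x$ of $S$ closest to $v$, observe $S \subseteq I_x$, and if the containment were proper, find $y < x$ with $y \notin S$ but $y^+ \in S$; then $I_y$ and $S$ are either non-nested with nonempty intersection or disjoint with connected union, violating one of the two axioms. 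You instead invoke a cardinality argument: all maximal nested collections on $[n]$ have exactly $n$ elements (via Theorem~\ref{thm:graph-LP-defn} and the fact that clusters have size $n$), and $\calC_v$ already has $n$ distinct members. This works and is shorter, but it imports the full strength of the Lam--Pylyavskyy classification (equivalently, purity of the nested set complex) to prove an elementary combinatorial lemma, which is a heavier and less self-contained dependency than the paper's three-line direct argument. One concrete caution about your sketched ``alternative'' direct route: the witness you propose --- a child $y \in \Gamma_{\lessdot x}^v$ with $S' \cap I_y \neq \varnothing$ and $S' \not\supseteq I_y$ --- need not exist when $S' \subsetneq I_x$ (take $S' = \{x\}$ with $x$ non-minimal: every child subtree is disjoint from $S'$). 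The correct witness is the paper's $y \notin S'$ with $y^+ \in S'$, and the resulting incompatibility may come from the connected-components axiom (disjoint sets with connected union) rather than from failed containment, so a complete direct proof must treat both failure modes.
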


\begin{proof}
Because $I_v = [n]$, the collection of subsets in $\mathcal{C}_{v}$ clearly covers all vertices of $\Gamma$. Moreover, we have the containment $I_{i} \subset I_{v}$ for all vertices $i \neq v$. Thus, $I_v$ is compatible with all other subsets in $\mathcal{C}_{v}$.

Let $i$, $j$ be distinct vertices of $\Gamma$ such that $i,j \neq v$. Because $\Gamma$ is a tree, there is a unique path in $\Gamma$ between $i$ and $j$. If this path does not pass through the root~$v$, then either $i < j$ or vice versa. Without loss of generality, assume that $i < j$. Then, $I_i \subseteq I_j$. If the unique path between $i$ and $j$ does pass through~$v$, then $I_i \cap I_j = \varnothing$. Because the root $v$ is not in either $I_i$ or $I_j$, the union $I_{i} \cup I_{j}$ has exactly two disjoint connected components: $I_i$ and $I_j$. In general, for any collection of pairwise disjoint sets $I_{i_1}, \dots, I_{i_\ell}$, the union $\bigcup_{j=1}^\ell I_{i_j}$ is a disconnected graph whose connected components are exactly the subgraphs with vertices $I_{i_1}, \dots, I_{i_\ell}$.

Finally, suppose that $S \subseteq [n]$ is a subset that is compatible with all $I_i, i \in [n]$. We may assume $S$ is a connected subset of vertices in $\Gamma$ because otherwise we could simply consider its connected components. Let $x \in [n]$ be the element of $S$ which is closest to $v$; this element is unique since $S$ is connected. Since $S \cap I_x \neq \varnothing$, we must have either $S \subseteq I_x$ or $I_x \subseteq S$ in order for $S$ and $I_x$ to be compatible. Because $x$ is the largest element of $S$ and $I_x$ contains $x$ and everything smaller, $I_x\not\subseteq S$. So, $S \subseteq I_x$. If $S$ is properly contained in $I_x$, then there is at least \new{one} element $y$ less than $x$ such that $y \notin S$ but $y^+ \in S$. It follows that $I_y$ and $S$ are not compatible. Therefore, $S$ must contain everything less than $x$. Because $S$ does not contain any vertices of $\Gamma$ closer to the root than $x$, it follows that $S = I_x$.

Therefore, $\mathcal{C}_{v}$ satisfies the definition of a maximal nested collection, as stated in Defini\-tion~\ref{defn:nested-coll}.
\end{proof}

\begin{Example}\label{ex:rooted-cluster}
Consider the tree in Figure~\ref{fig:exampletree}. The cluster rooted at 1, $\calC_1$, consists of $I_1 = \{1,2,3,4,5,6,7,8\}$, $I_2 = \{2,3,4,5,6,7,8\}$, $I_3 = \{3\}$, $I_4 = \{4,6,7\}$, $I_5 = \{5,8\}$, $I_6 = \{6\}$, $I_7 = \{7\}$, and $I_8 = \{8\}$. One can check that these sets form a nested collection, and that it is not possible to add another set compatible with all others.

As a further example, given the same tree if we rooted at $4$ instead, then $I_2 = \{1,2,3,5,8\}$.
\end{Example}

\begin{figure}
\begin{center}
\begin{tikzpicture}
\node at (0.3,2) {1};
\node at (0.3,1) {2};
\node at (-1.3,0) {3};
\node at (0.3,0) {4};
\node at (1.3,0) {5};
\node at (-0.35,-1.3) {6};
\node at (0.35,-1.3) {7};
\node at (1,-1.3) {8};
\draw [line width=0.25mm, fill=black] (0,2) circle (0.75mm);
\draw [line width=0.25mm, fill=black] (0,1) circle (0.75mm);
\draw [line width=0.25mm, fill=black] (-1,0) circle (0.75mm);
\draw [line width=0.25mm, fill=black] (0,0) circle (0.75mm);
\draw [line width=0.25mm, fill=black] (1,0) circle (0.75mm);
\draw [line width=0.25mm, fill=black] (0.35,-1) circle (0.75mm);
\draw [line width=0.25mm, fill=black] (-0.35,-1) circle (0.75mm);
\draw [line width=0.25mm, fill=black] (1,-1) circle (0.75mm);
\draw (0,2) -- (0,1)
(0,1) -- (-1,0)
(0,1) -- (0,0)
(0,1) -- (1,0)
(0,0) -- (-0.35,-1)
(0,0) -- (0.35,-1)
(1,0) -- (1,-1);
\end{tikzpicture}
\end{center}
\caption{We can picture the cluster rooted at vertex 1 by taking subsets which are closed going down. For example, $I_2 = \{2,3,4,5,6,7,8\}$ and $I_5 = \{5,8\}$.}\label{fig:exampletree}
\end{figure}
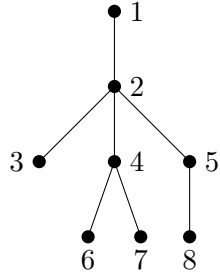

\section{Formulas}\label{sec:formulas}

Though Theorem~\ref{thm:graph-LP-defn} and Proposition~\ref{prop:exchange-rels} give formulas for all cluster variables, these formulas are not guaranteed to be in terms of the variables in any given cluster. In this section, we will prove formulas for each cluster variable in terms of the rooted cluster $\calC_v$. These formulas will allow us to prove positivity for this case.

We first state a fact that will be useful throughout this section.

\begin{Lemma}
\label{lem:in-cluster}
Let $\Gamma$ be a tree rooted at $v$ and $i$ be a vertex of $\Gamma$. Then \[Y_{\Gamma_{< i}^v}=\prod_{u\in\Gamma_{\lessdot i}^v}Y_{I_u}\]
and for $u\in\Gamma_{< i}^v$,
\[ Y_{\Gamma_{< i}^v \backslash \Gamma_{\geq u}^v} = \prod_{w \in \Gamma_{\leq i}^v \cap \Gamma_{\geq u}^{v}} ~\prod_{x \in \Gamma_{\lessdot w}^{v} \backslash \Gamma_{\geq u}^{v}} Y_{I_{x}}. \]
\end{Lemma}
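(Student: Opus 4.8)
The plan is to recognize that both identities are instances of the same structural fact: when $\Gamma$ is a tree rooted at $v$ and $S$ is a connected "downward-closed relative" subset, the set $S$ decomposes as a disjoint union of sets of the form $I_x$, and the $Y$-variable of a disjoint union factors as the product of the $Y$-variables of the components. The factorization $Y_{S_1 \sqcup S_2} = Y_{S_1} Y_{S_2}$ when $S_1, S_2$ lie in different connected components of $\Gamma$ follows directly from the determinantal description in Theorem~\ref{thm:graph-LP-defn}: the matrix $\mathfrak{N}$ restricted to rows and columns indexed by $S_1 \sqcup S_2$ is block diagonal (there are no edges between $S_1$ and $S_2$, hence no $-1$ entries off the blocks), so its determinant is the product of the determinants of the two blocks. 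Iterating, $Y$ of any disjoint union of sets sitting in distinct components is the product of the individual $Y$'s.

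With that reduction in hand, the first identity is immediate: since $i$ covers exactly the vertices in $\Gamma_{\lessdot i}^v$, and for distinct $u, u' \in \Gamma_{\lessdot i}^v$ the subtrees rooted at $u$ and $u'$ are in different connected components of $\Gamma \setminus \{i\}$ (equivalently, of the induced subgraph on $\Gamma_{<i}^v$), we get the partition $\Gamma_{<i}^v = \bigsqcup_{u \in \Gamma_{\lessdot i}^v} I_u$ into distinct-component pieces, and the factorization applies.

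For the second identity, I would first identify the vertex set $\Gamma_{<i}^v \setminus \Gamma_{\geq u}^v$ combinatorially. A vertex $y < i$ lies in this set iff $y$ is not $\geq u$; since $u < i$, the vertices in $\Gamma_{<i}^v$ that are $\geq u$ are exactly those on the path from $u$ up to $i$ (excluding $i$), together with nothing below $u$ — wait, more carefully, $\Gamma_{\geq u}^v \cap \Gamma_{\le i}^v$ is the chain from $u$ to $i$. Removing $\Gamma_{\geq u}^v$ from $\Gamma_{<i}^v$ therefore removes this chain and everything weakly below $u$; what remains is precisely the union, over each vertex $w$ on the chain from $u$ to $i$, of the subtrees $I_x$ hanging off $w$ via children $x$ that are not themselves on the chain (i.e.\ $x \in \Gamma_{\lessdot w}^v \setminus \Gamma_{\geq u}^v$). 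This gives the vertex partition $\Gamma_{<i}^v \setminus \Gamma_{\geq u}^v = \bigsqcup_{w \in \Gamma_{\le i}^v \cap \Gamma_{\geq u}^v} \ \bigsqcup_{x \in \Gamma_{\lessdot w}^v \setminus \Gamma_{\geq u}^v} I_x$, and again all these $I_x$ live in pairwise distinct connected components (each is a subtree attached to the removed chain at a single point, then detached), so the distinct-component factorization yields the claimed product.

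The main obstacle is the careful bookkeeping in the second identity: verifying that the index set $\{(w,x) : w \in \Gamma_{\le i}^v \cap \Gamma_{\geq u}^v,\ x \in \Gamma_{\lessdot w}^v \setminus \Gamma_{\geq u}^v\}$ really enumerates a partition of $\Gamma_{<i}^v \setminus \Gamma_{\geq u}^v$ with no omissions and no overlaps, and that the pieces are genuinely in distinct components of the induced subgraph (which is what licenses the determinant to split). This is a routine but slightly fiddly argument about paths in a tree; I would handle it by noting that every vertex $y \in \Gamma_{<i}^v \setminus \Gamma_{\geq u}^v$ has a unique highest ancestor $w$ still satisfying $w \le i$ and $w \ge u$ (such $w$ exists because $y < i$ and $y \not\ge u$ forces the path from $y$ to $i$ to meet the $u$-to-$i$ chain), and its child on the $y$-side, call it $x$, satisfies $x \in \Gamma_{\lessdot w}^v \setminus \Gamma_{\geq u}^v$ with $y \in I_x$; uniqueness of the pair $(w,x)$ is then clear from the tree structure. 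I would keep this step concise, citing only the block-diagonal observation about $\mathfrak{N}$ and Theorem~\ref{thm:graph-LP-defn}.
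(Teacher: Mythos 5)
Your proof is correct and follows essentially the same route as the paper's: both identities reduce to decomposing the relevant vertex set as a disjoint union of sets $I_x$ lying in distinct connected components and invoking multiplicativity of $Y$ over components (which the paper treats as the definition of $Y$ for disconnected index sets, and which you justify directly via block-diagonality of $\mathfrak{N}$), with the second identity handled by assigning each surviving vertex to the unique chain vertex $w$ between $u$ and $i$ off which its subtree hangs. One small slip worth noting: removing $\Gamma_{\geq u}^v$ deletes only the chain from $u$ up to (but not including) $i$, \emph{not} ``everything weakly below $u$'' --- the vertices strictly below $u$ survive and are exactly the $I_x$ for $x\in\Gamma_{\lessdot u}^v$, which your displayed partition and your final paragraph do account for correctly (there the ancestor $w$ you want is the \emph{lowest}, not highest, chain element above $y$).
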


\begin{proof}
The formula for $Y_{\Gamma_{< i}^v}$ follows from the definition for $Y$-variables indexed by disconnected sets.

Given $i$ and $u \in \Gamma_{< i}^{v}$, there is a chain $i = i_0 \gtrdot i_1 \gtrdot \cdots \gtrdot i_k = u$ in the vertex poset. This means $\Gamma_{< i}^v \backslash \Gamma_{\geq u}^v = \Gamma_{< i}^v \backslash \{ i_1, \dots, i_k \}.$ Consider some $y \in \Gamma_{< i}^v \backslash \Gamma_{\geq u}^v$ and let $0 \leq j \leq k$ be the largest index such that $y \in I_{i_j}$ (note that $y\in I_i=I_{i_0}$ so $j$ exists). Because $y \not\in \Gamma_{\geq u}^v$, we must have $y \in I_{j'}$ for some $j' \lessdot i_j$ where, if $j < k$, then $j' \neq i_{j+1}$.
Therefore, $\Gamma_{< i}^v \backslash \Gamma_{\geq u}^v = \bigsqcup_{j = 0}^k \bigsqcup_{x \in \Gamma_{\lessdot i_j} \backslash \{ i_{j+1} \}} I_{x}$, where we set $\{i_{k+1}\} = \varnothing$. The desired $Y$-variable identity follows.
\end{proof}

\subsection[X-variables]{$\boldsymbol{X}$-variables}\label{sec:X-formulas}

\begin{Lemma}\label{lem:X-formula-1}
Let $\Gamma$ be a tree rooted at $v$. For any vertex $i$ of $\Gamma$,
\begin{align*}
 X_i &= \begin{cases}
 \dfrac{Y_{\Gamma_{< i}^v}(X_{i^+}+A_i)+\sum_{u\in \Gamma_{< i}^v}Y_{\Gamma_{< i}^v\setminus\Gamma_{\geq u}^v}A_u}{Y_{I_i}} & \textrm{if } i \neq v, \\
 \dfrac{\sum_{u\in [n]}Y_{[n]\setminus\Gamma_{\geq u}^v}A_u}{Y_{[n]}} & \textrm{if } i = v.
 \end{cases}
\end{align*}
\end{Lemma}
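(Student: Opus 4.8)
\textbf{Proof proposal for Lemma~\ref{lem:X-formula-1}.}

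The plan is to obtain both cases from the exchange relation in Proposition~\ref{prop:exchange-rels}(a) with a judicious choice of $S$, and then use Lemma~\ref{lem:in-cluster} to rewrite the $Y$-variables appearing on the right-hand side in terms of the rooted cluster $\calC_v$. For $i \neq v$, I would take $S = \Gamma_{< i}^v$; since $\Gamma$ is a tree rooted at $v$, the connected component of $S \cup \{i\}$ containing $i$ is exactly $\Gamma_{\leq i}^v = I_i$, so $S \oplus i = I_i$ and $S \ominus i = \varnothing$, giving $Y_{S\ominus i} = 1$. For $i = v$ one takes $S = [n] \setminus \{v\}$, whose union with $v$ is all of $[n]$ and is already connected, so again $S \oplus v = [n]$ and $S\ominus v = \varnothing$. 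With these choices the denominators $Y_{S\ominus i}$ disappear, matching the stated formulas.

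The next step is to identify the numerator. In Proposition~\ref{prop:exchange-rels}(a) the numerator is $\sum_{j \notin Si} P_S^{ij} X_j + \sum_{j \in Si} P_S^{ij} A_j$, where $P_S^{ij} = \sum_{p\colon i \to_S j} Y_{S\setminus p}$ ranges over paths from $i$ to $j$ using only $i$, $j$, and vertices of $S$. Here the key combinatorial observation is that, because $\Gamma$ is a tree, between any two vertices there is a \emph{unique} path; so each $P_S^{ij}$ is either a single term $Y_{S\setminus p}$ (if that unique path stays inside $S \cup \{i,j\}$) or is empty. For $i \neq v$ with $S = \Gamma_{<i}^v$: the only vertex $j \notin Si$ reachable by a path through $S$ is $j = i^+$ (the unique path is the single edge $i\,i^+$, using no vertices of $S$), contributing $Y_S \cdot X_{i^+} = Y_{\Gamma_{<i}^v} X_{i^+}$. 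The vertices $j \in Si$ are $i$ itself (the empty path, contributing $Y_S A_i = Y_{\Gamma_{<i}^v}A_i$) and each $u \in \Gamma_{<i}^v$; for such $u$ the unique path from $i$ down to $u$ uses exactly the vertices $\Gamma_{\geq u}^v \cap \Gamma_{\le i}^v$ strictly between, so $S \setminus p = \Gamma_{<i}^v \setminus \Gamma_{\geq u}^v$, contributing $Y_{\Gamma_{<i}^v\setminus\Gamma_{\geq u}^v}A_u$. Summing these reproduces exactly the claimed numerator. For $i = v$ with $S = [n]\setminus\{v\}$, there are no vertices outside $Sv = [n]$, so there is no $X_j$ term; and for each $u \in [n]$ (including $u = v$, where $\Gamma_{\geq v}^v = \{v\}$ and the path is empty so $[n]\setminus p = [n]\setminus\Gamma_{\geq v}^v$) the unique path from $v$ to $u$ contributes $Y_{[n]\setminus\Gamma_{\geq u}^v}A_u$, giving the second formula.

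I anticipate the main obstacle is purely bookkeeping rather than conceptual: one must check carefully that, for the chosen $S$, the vertex set $S \setminus p$ of the unique $i$-to-$u$ path really equals $\Gamma_{<i}^v \setminus \Gamma_{\geq u}^v$ (equivalently, that the interior vertices of the path are exactly $\Gamma_{\le i}^v \cap \Gamma_{\ge u}^v \setminus \{i, u\}$), and that no other $j$ contributes to the sum — i.e., that for $j \neq i^+$ outside $S\cup\{i\}$ the unique path to $j$ leaves $S$, and for $j \in Si$ not of the above form $P_S^{ij}$ vanishes. This is where rootedness of the tree does all the work, and I would make it precise by invoking the chain $i = i_0 \gtrdot i_1 \gtrdot \cdots \gtrdot i_k = u$ in the vertex poset exactly as in the proof of Lemma~\ref{lem:in-cluster}. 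Once these matchings are verified, substituting and clearing the (trivial) denominator $Y_{S\ominus i} = Y_\varnothing = 1$ gives the result directly; no further simplification via Lemma~\ref{lem:in-cluster} is even needed for this particular statement, though it will be for the downstream formulas.
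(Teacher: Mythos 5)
Your proposal is correct and follows exactly the paper's route: the paper's entire proof is the single sentence that the formulas follow from Proposition~\ref{prop:exchange-rels}(a) with $S=\Gamma_{< i}^v$ (which for $i=v$ is precisely $[n]\setminus\{v\}$, your choice). Your careful identification of the surviving terms $P_S^{ij}$ via uniqueness of paths in a tree is the bookkeeping the paper leaves implicit, and it checks out.
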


\begin{proof}
These formulas follow directly from Proposition~\ref{prop:exchange-rels}(a) with $S=\Gamma_{< i}^v$.
\end{proof}

We can use this to find formulas for the $X$-variables in terms of the elements of a rooted cluster.

\begin{Proposition}\label{prop:X-formula-2}
Let $\Gamma$ be a tree rooted at $v$ and $i$ be a vertex of $\Gamma$. Then \[X_i=\frac{\sum_{u\in\Gamma_{\geq i}^v}\Big(\prod_{w\in\Gamma_{\geq i}^v\setminus\Gamma_{\geq u}^v} Y_{\Gamma_{< w}^v}\Big)\Big(\prod_{w\in\Gamma_{> u}^v} Y_{I_w}\Big)\Big(\sum_{w\in I_u}Y_{\Gamma_{< u}^v\setminus\Gamma_{\geq w}^v}A_w\Big)}{\prod_{u\in\Gamma_{\geq i}^v}Y_{I_u}}.\]
\end{Proposition}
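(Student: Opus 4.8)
The plan is to prove this formula by induction on the height of $i$ in the vertex poset (i.e.\ on the length of the longest chain in $\Gamma_{\leq i}^v$), starting from the leaves and working upward toward the root, using Lemma~\ref{lem:X-formula-1} as the recursive step. Concretely, Lemma~\ref{lem:X-formula-1} expresses $X_i$ in terms of $X_{i^+}$ together with $Y$-variables that, by Lemma~\ref{lem:in-cluster}, are already products of $Y_{I_u}$'s for $u$ in the rooted cluster. So the only non-cluster quantity on the right-hand side is $X_{i^+}$, which has strictly larger height; substituting the inductive formula for $X_{i^+}$ and simplifying should collapse to the stated closed form.

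The base case is $i = v$: here the second branch of Lemma~\ref{lem:X-formula-1} gives $X_v = \big(\sum_{u\in[n]} Y_{[n]\setminus\Gamma_{\geq u}^v}A_u\big)/Y_{[n]}$. One checks this agrees with the Proposition's formula when $i = v$, noting that $\Gamma_{\geq v}^v = \{v\}$, so the outer sum has only the term $u = v$, the product over $w \in \Gamma_{\geq v}^v\setminus\Gamma_{\geq v}^v$ is empty, the product over $w \in \Gamma_{>v}^v$ is empty, and $\Gamma_{<v}^v = [n]\setminus\{v\}$ while $Y_{\Gamma_{<v}^v\setminus\Gamma_{\geq w}^v}$ needs to be reconciled with $Y_{[n]\setminus\Gamma_{\geq w}^v}$ via Lemma~\ref{lem:in-cluster} and the observation that removing $v$ from an index set containing all of $\Gamma_{\geq v}^v=\{v\}$ is harmless — actually one must be slightly careful here and it is cleanest to treat $i=v$ and $i\neq v$ uniformly by noting $[n] = \Gamma_{<v}^v \cup \{v\}$ and that $Y_{[n]} = Y_{I_v}$.

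For the inductive step with $i \neq v$, I would write $X_{i^+}$ using the Proposition's formula (valid by induction since $\mathrm{ht}(i^+) > \mathrm{ht}(i)$), substitute into the first branch of Lemma~\ref{lem:X-formula-1}, and then verify the resulting expression matches the claimed formula for $X_i$. The key combinatorial identities to exploit are: $\Gamma_{\geq i}^v = \{i\} \sqcup \Gamma_{\geq i^+}^v$; the outer summation index $u$ in the $X_i$ formula ranges over $\Gamma_{\geq i}^v$, splitting into the single term $u = i$ (which should produce the $Y_{\Gamma_{<i}^v}(X_{i^+}+A_i)/Y_{I_i}$ and $A_i$-type contributions, after using $\Gamma_{<i}^v = \Gamma_{<i}^v\setminus\Gamma_{\geq i}^v$ when $w$ ranges appropriately and $\sum_{w\in I_i}Y_{\Gamma_{<i}^v\setminus\Gamma_{\geq w}^v}A_w$ splitting off the $w=i$ term) and terms with $u \in \Gamma_{\geq i^+}^v$ (which should match the substituted $X_{i^+}$ expression after multiplying through by $Y_{\Gamma_{<i}^v}/Y_{I_i}$ and using $\Gamma_{> u}^v = \{i^+,\dots\}$ bookkeeping together with Lemma~\ref{lem:in-cluster} to absorb the extra $Y_{\Gamma_{<i}^v}$ factor into the product $\prod_{w\in\Gamma_{\geq i}^v\setminus\Gamma_{\geq u}^v}Y_{\Gamma_{<w}^v}$, which gains exactly the $w = i$ factor). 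One also needs $\sum_{u\in\Gamma_{<i}^v}Y_{\Gamma_{<i}^v\setminus\Gamma_{\geq u}^v}A_u$ from Lemma~\ref{lem:X-formula-1} to be reconciled with the $u=i$ portion of the target sum via the same product-of-$Y_{I_x}$ decomposition.

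The main obstacle I anticipate is the bookkeeping in matching the two nested products $\prod_{w\in\Gamma_{\geq i}^v\setminus\Gamma_{\geq u}^v}Y_{\Gamma_{<w}^v}$ and $\prod_{w\in\Gamma_{>u}^v}Y_{I_w}$ across the substitution: when passing from $i^+$ to $i$, one extra $Y_{\Gamma_{<i}^v}$ appears (from the exchange relation) and must be shown to be exactly the new $w=i$ factor of the first product, while simultaneously the denominators $\prod_{u\in\Gamma_{\geq i^+}^v}Y_{I_u}$ versus $\prod_{u\in\Gamma_{\geq i}^v}Y_{I_u}$ differ by the single factor $Y_{I_i}$, which is precisely the denominator introduced by Lemma~\ref{lem:X-formula-1}. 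Making these cancellations rigorous — i.e.\ checking that no spurious $Y_{I_i}$ or $Y_{\Gamma_{<i}^v}$ factors survive and that the $A_w$ coefficients line up term-by-term — is the delicate part; everything else is routine once the index-set decompositions from Lemma~\ref{lem:in-cluster} are in hand. Positivity of the resulting expression is then immediate since every $Y$-variable and every $A_w$ is, and the formula is a ratio of a nonnegative sum over a product of $Y_{I_u}$'s, all of which lie in the cluster $\calC_v$.
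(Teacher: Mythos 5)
Your proposal is correct and takes essentially the same approach as the paper's proof: induction with base case $i=v$ (where the claimed formula collapses to the second branch of Lemma~\ref{lem:X-formula-1}) and inductive step obtained by substituting the formula for $X_{i^+}$ into the first branch of Lemma~\ref{lem:X-formula-1}, with exactly the bookkeeping you describe (the extra $Y_{\Gamma_{<i}^v}$ becomes the new $w=i$ factor of the first product, and the new denominator factor $Y_{I_i}$ is the one introduced by the exchange relation). The only slip is expository: since the recursion goes through $i^+$, the induction proceeds from the root \emph{downward} toward the leaves rather than ``from the leaves upward'' as you first state, but the base case and inductive step you actually set up are the correct ones.
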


\begin{proof}
We proceed by induction. The base case is $i=v$. Since $I_v=[n]$, in this case, the formula from the proposition reduces to the formula in the second part of Lemma~\ref{lem:X-formula-1}.

Now we will assume the proposition holds for $i^+$ and prove that it holds for $i$. From Lemma~\ref{lem:X-formula-1}, we know
\begin{align*}
X_i={} & \frac{Y_{\Gamma_{< i}^v}(X_{i^+}+A_i)+\sum_{u\in \Gamma_{< i}^v}Y_{\Gamma_{< i}^v\setminus\Gamma_{\geq u}^v}A_u}{Y_{I_i}}\\
={} & \frac{Y_{\Gamma_{< i}^v}\sum_{u\in\Gamma_{\geq i^+}^v}\Big(\prod_{w\in\Gamma_{\geq i^+}^v\setminus\Gamma_{\geq u}^v} Y_{\Gamma_{< w}^v}\Big)\Big(\prod_{w\in\Gamma_{> u}^v} Y_{I_w}\Big)\Big(\sum_{w\in I_u}Y_{\Gamma_{< u}^v\setminus\Gamma_{\geq w}^v}A_w\Big)}{Y_{I_i}\prod_{u\in\Gamma_{\geq i^+}^v}Y_{I_u}} \\
&{}+ \frac{\Big(\prod_{u\in\Gamma_{\geq i^+}^v}Y_{I_u}\Big)\Big(Y_{\Gamma_{< i}^v}A_i+\sum_{u\in \Gamma_{< i}^v}Y_{\Gamma_{< i}^v\setminus\Gamma_{\geq u}^v}A_u\Big)}{Y_{I_i}\prod_{u\in\Gamma_{\geq i^+}^v}Y_{I_u}}\\
={} & \frac{\sum_{u\in\Gamma_{\geq i^+}^v}\Big(\prod_{w\in\Gamma_{\geq i}^v\setminus\Gamma_{\geq u}^v} Y_{\Gamma_{< w}^v}\Big)\Big(\prod_{w\in\Gamma_{> u}^v} Y_{I_w}\Big)\Big(\sum_{w\in I_u}Y_{\Gamma_{< u}^v\setminus\Gamma_{\geq w}^v}A_w\Big)}{\prod_{u\in\Gamma_{\geq i}^v}Y_{I_u}} \\
&{}+ \frac{\Big(\prod_{u\in\Gamma_{> i}^v}Y_{I_u}\Big)\Big(\sum_{w\in I_i}Y_{\Gamma_{< i}^v\setminus\Gamma_{\geq w}^v}A_w\Big)}{\prod_{u\in\Gamma_{\geq i}^v}Y_{I_u}}\\
={} & \frac{\sum_{u\in\Gamma_{\geq i}^v}\Big(\prod_{w\in\Gamma_{\geq i}^v\setminus\Gamma_{\geq u}^v} Y_{\Gamma_{< w}^v}\Big)\Big(\prod_{w\in\Gamma_{> u}^v} Y_{I_w}\Big)\Big(\sum_{w\in I_u}Y_{\Gamma_{< u}^v\setminus\Gamma_{\geq w}^v}A_w\Big)}{\prod_{u\in\Gamma_{\geq i}^v}Y_{I_u}},
\end{align*}
where the second equality is from the inductive hypothesis.
\end{proof}

For example, in the tree in Figure~\ref{fig:exampletree},
\begin{align*}
X_4={} & \frac{1}{Y_{I_1}Y_{I_2}Y_{I_4}}\big( Y_{\Gamma_{< 2}^1}Y_{\Gamma_{< 4}^1}\big(A_1+Y_{\Gamma_{< 1}^1\setminus\Gamma_{\geq 2}^1}A_2+Y_{\Gamma_{< 1}^1\setminus\Gamma_{\geq 3}^1}A_3+Y_{\Gamma_{< 1}^1\setminus\Gamma_{\geq 4}^1}A_4 \\
&{}+Y_{\Gamma_{< 1}^1\setminus\Gamma_{\geq 5}^1}A_5+Y_{\Gamma_{< 1}^1\setminus\Gamma_{\geq 6}^1}A_6+Y_{\Gamma_{< 1}^1\setminus\Gamma_{\geq 7}^1}A_7+Y_{\Gamma_{< 1}^1\setminus\Gamma_{\geq 8}^1}A_8\big) \\
&{}+ Y_{\Gamma_{< 4}^1}Y_{I_1}\big(Y_{\Gamma_{< 2}^1\setminus\Gamma_{\geq 2}^1}A_2+Y_{\Gamma_{< 2}^1\setminus\Gamma_{\geq 4}^1}A_4+Y_{\Gamma_{< 2}^1\setminus\Gamma_{\geq 6}^1}A_6+Y_{\Gamma_{< 2}^1\setminus\Gamma_{\geq 7}^1}A_7\big) \\
&{}+ Y_{I_1}Y_{I_2}\big(Y_{\Gamma_{< 4}^1\setminus\Gamma_{\geq 4}^1}A_4+Y_{\Gamma_{< 4}^1\setminus\Gamma_{\geq 6}^1}A_6+Y_{\Gamma_{< 4}^1\setminus\Gamma_{\geq 7}^1}A_7\big)\big)\\
={} & \frac{1}{Y_{I_1}Y_{I_2}Y_{I_4}}\big( Y_{I_3}Y_{I_4}Y_{I_5}Y_{I_6}Y_{I_7}\big(A_1+Y_{I_3}Y_{I_4}Y_{I_5}A_2+Y_{I_4}Y_{I_5}A_3+Y_{I_3}Y_{I_5}Y_{I_6}Y_{I_7}A_4 \\
&{}+ Y_{I_3}Y_{I_4}Y_{I_8}A_5+Y_{I_3}Y_{I_5}Y_{I_7}A_6+Y_{I_3}Y_{I_5}Y_{I_6}A_7+Y_{I_3}Y_{I_4}A_8\big)\\
&{} +Y_{I_6}Y_{I_7}Y_{I_1}\big(Y_{I_3}Y_{I_4}Y_{I_5}A_2 + Y_{I_3}Y_{I_5}Y_{I_6}Y_{I_7}A_4+Y_{I_3}Y_{I_5}Y_{I_7}A_6+Y_{I_3}Y_{I_5}Y_{I_6}A_7\big)\\
&{} +Y_{I_1}Y_{I_2}\big(Y_{I_6}Y_{I_7}A_4+Y_{I_7}A_6+Y_{I_6}A_7\big)\big).
\end{align*}

By Lemma~\ref{lem:in-cluster}, we obtain the following immediate corollary:

\begin{Corollary}\label{cor:X-pos}
Let $\Gamma$ be a tree and $\mathcal{C}$ a rooted cluster for $\Gamma$. For any vertex $i$ of $\Gamma$, $X_{i}$ can be expressed as a Laurent polynomial in the elements of $\mathcal{C}$ with positive coefficients.
\end{Corollary}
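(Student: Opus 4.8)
The plan is to combine the explicit formula for $X_i$ from Proposition~\ref{prop:X-formula-2} with the $Y$-variable identities recorded in Lemma~\ref{lem:in-cluster}. Observe first that Proposition~\ref{prop:X-formula-2} already exhibits $X_i$ as a ratio: the denominator is $\prod_{u\in\Gamma_{\geq i}^v}Y_{I_u}$, which is a product of elements of the rooted cluster $\calC=\calC_v$, and the numerator is a polynomial in the $A_w$'s whose coefficients are products of $Y$-variables of the form $Y_{\Gamma_{<w}^v}$ and $Y_{I_w}$ and $Y_{\Gamma_{<u}^v\setminus\Gamma_{\geq w}^v}$. The $Y_{I_w}$ are already cluster elements. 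The content of Lemma~\ref{lem:in-cluster} is precisely that the other two types, $Y_{\Gamma_{<w}^v}$ and $Y_{\Gamma_{<u}^v\setminus\Gamma_{\geq w}^v}$, factor as \emph{products} (not just Laurent polynomials) of $Y_{I_x}$'s for various $x$. Since each $A_w$ is one of the generators in $R=\Z[A_1,\dots,A_n]$ and hence appears with coefficient $1$, substituting these factorizations turns the entire numerator into a polynomial with nonnegative integer coefficients in the variables $\{A_w\}_{w\in[n]}\cup\{Y_{I_x}\}_{x\in[n]}$.

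So the key steps, in order, are: (1) invoke Proposition~\ref{prop:X-formula-2} to write $X_i$ as the displayed ratio; (2) apply the first identity of Lemma~\ref{lem:in-cluster} to rewrite each factor $Y_{\Gamma_{<w}^v}$ as $\prod_{x\in\Gamma_{\lessdot w}^v}Y_{I_x}$; (3) apply the second identity of Lemma~\ref{lem:in-cluster} to rewrite each factor $Y_{\Gamma_{<u}^v\setminus\Gamma_{\geq w}^v}$ (which occurs for $w\in I_u$, i.e.\ with $w\in\Gamma_{\leq u}^v$, matching the hypothesis $u\in\Gamma_{<i}^v$ of that lemma after relabeling) as the stated double product of $Y_{I_x}$'s; (4) note that the denominator $\prod_{u\in\Gamma_{\geq i}^v}Y_{I_u}$ is a monomial in cluster elements, so the whole expression is now manifestly a Laurent polynomial in $\{A_w\}\cup\{Y_{I_x}\}$; and (5) observe that every coefficient arising is a product of nonnegative integers (in fact each is $1$ times a monomial in the $Y_{I_x}$), hence positive. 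Since $\{A_w\}_{w\in[n]}\subset R$ and $\{Y_{I_x}\}_{x\in[n]}=\calC$, this is exactly a positive Laurent expansion in the elements of $\calC$ (with coefficients in the coefficient ring $R$).

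There is essentially no obstacle here beyond bookkeeping, which is why this is stated as a corollary: the work was already done in establishing Proposition~\ref{prop:X-formula-2} and Lemma~\ref{lem:in-cluster}. The only point requiring a moment's care is checking that the index ranges match when applying the second identity of Lemma~\ref{lem:in-cluster} — namely that for each $u\in\Gamma_{\geq i}^v$ appearing in the outer sum and each $w\in I_u=\Gamma_{\leq u}^v$, the term $Y_{\Gamma_{<u}^v\setminus\Gamma_{\geq w}^v}$ is of the form covered by the lemma (with the lemma's ``$i$'' being our $u$ and the lemma's ``$u$'' being our $w$, and noting $w\in\Gamma_{<u}^v$ exactly when $w\neq u$, while for $w=u$ the factor $Y_{\Gamma_{<u}^v\setminus\Gamma_{\geq u}^v}=Y_{\Gamma_{<u}^v}$ is handled by the first identity). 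One should also note the convention $Y_\varnothing=1$ so that edge cases (leaves, $w=u$, etc.) cause no trouble. Assembling these substitutions and reading off the sign of each coefficient completes the proof.
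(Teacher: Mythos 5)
Your proposal is correct and follows exactly the paper's route: the paper derives this corollary immediately from Proposition~\ref{prop:X-formula-2} together with Lemma~\ref{lem:in-cluster}, which is precisely the substitution argument you carry out. You have merely filled in the bookkeeping (index matching and the $w=u$ edge case) that the paper leaves implicit in calling the corollary ``immediate.''
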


\subsection[Y-variables]{$\boldsymbol{Y}$-variables}\label{sec:Y-formulas}

For the $Y$-variables, we begin by looking at sets of size~1.

\begin{Proposition}\label{prop:Y-single-formula}
Let $\Gamma$ be a tree rooted at $v$. For every vertex $i$ of $\Gamma$,
\[ Y_{\{i\}}=\frac{Y_{I_i}+\sum_{u\in\Gamma_{\lessdot i}^v}Y_{\Gamma_{< i}^v\setminus\{u\}}}{Y_{\Gamma_{< i}^v}}.\]
\end{Proposition}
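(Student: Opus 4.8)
The plan is to apply Proposition~\ref{prop:exchange-rels}(b) in the rooted tree with a judicious choice of $S$, $i$, and $j$, and then use Lemma~\ref{lem:in-cluster} to recognize the resulting $Y$-variables as elements of the rooted cluster $\calC_v$. Concretely, I would take $S = \Gamma_{< i}^v$ together with the two ``new'' vertices being $i$ and $i$ itself via a degenerate use of part (b), or — more cleanly — set up part (a) of Proposition~\ref{prop:exchange-rels} with $S = \Gamma_{<i}^v \setminus \{i\}$-type data to isolate $Y_{\{i\}}$. Since $\{i\}$ is a single vertex, the cleanest route is part (b) with $S$ chosen so that $S\oplus i$ and $S \oplus j$ both collapse nicely: in fact, taking $S = \varnothing$ or $S = \Gamma_{<i}^v$ and $i,j$ to be $i$ and its parent (or two children) should produce an exchange relation in which the left-hand side involves $Y_{\{i\}}$ multiplied by an already-known cluster variable.

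The key steps, in order, would be: (1) Identify which instance of Proposition~\ref{prop:exchange-rels} to invoke. Because $Y_{\{i\}}$ is indexed by a single vertex and the denominator on the right-hand side of the claimed formula is $Y_{\Gamma_{<i}^v}$, this strongly suggests an exchange relation of the form $Y_{\{i\}} \cdot Y_{\Gamma_{<i}^v} = (\text{numerator})$, which is exactly what part (b) gives when $S$, $i$, $j$ are chosen so that $Y_{S\oplus i} = Y_{\{i\}}$, $Y_{S\oplus j} = Y_{\Gamma_{<i}^v}$ (or a product thereof), and the $Y_{S\ominus \cdot}$ factors are trivial. (2) Compute the relevant $P_S^{ij}$ and $P_S^{ji}$ terms: since $\Gamma$ is a tree, the path sums degenerate to single paths, so $P_S^{ij}$ is a single $Y$-variable (or $1$). (3) Identify $Y_{Sij}Y_S$ with $Y_{I_i}$ using $I_i = \Gamma_{\leq i}^v = \{i\} \cup \Gamma_{<i}^v$ and, when $\Gamma_{<i}^v$ is disconnected, the product formula from Lemma~\ref{lem:in-cluster} (namely $Y_{\Gamma_{<i}^v} = \prod_{u \in \Gamma_{\lessdot i}^v} Y_{I_u}$). (4) Identify the cross terms $P_S^{ij}P_S^{ji}$: these should become $\sum_{u \in \Gamma_{\lessdot i}^v} Y_{\Gamma_{<i}^v \setminus \{u\}}$, where each summand corresponds to the path from $i$ down into the subtree below the child $u$ and back, with $Y_{\Gamma_{<i}^v \setminus \{u\}}$ being the relevant ``$S$ minus the path'' variable. (5) Assemble and divide through by $Y_{\Gamma_{<i}^v}$ (and any $Y_{S\ominus\cdot}$ factors, which I expect to be $1$ because in a tree $S \ominus i$ is typically empty for the chosen $S$).

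The main obstacle I anticipate is choosing $S$, $i$, $j$ correctly so that the generic exchange relation of Proposition~\ref{prop:exchange-rels} specializes to precisely the claimed identity — in particular, making sure the $S \oplus$ and $S \ominus$ operations behave as intended in the rooted tree, and correctly bookkeeping which paths $p : i \to_S j$ contribute to each $P_S^{ij}$. A secondary subtlety is the case $i = v$ (where there is no parent $i^+$), and the degenerate case where $i$ is a leaf so that $\Gamma_{<i}^v = \varnothing$ and $Y_{\Gamma_{<i}^v} = Y_\varnothing = 1$; I would check that the formula still reads correctly there, reducing to $Y_{\{i\}} = Y_{I_i} = Y_{\{i\}}$ plus an empty sum. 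Once the correct specialization is pinned down, the remaining manipulations are routine given Lemma~\ref{lem:in-cluster}.
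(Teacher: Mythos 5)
There is a genuine gap, and it is visible inside your own outline: your step (2) and your step (4) contradict each other. As you correctly observe in step (2), when $\Gamma$ is a tree there is a unique simple path between any two vertices, so each $P_S^{ij}$ is a single $Y$-variable (or $0$ or $1$), and hence $P_S^{ij}P_S^{ji}$ is a single monomial. Consequently the numerator $Y_{Sij}Y_S+P_S^{ij}P_S^{ji}$ produced by Proposition~\ref{prop:exchange-rels}(b) is always a \emph{binomial} in the $Y$-variables, for every admissible choice of $S$, $i$, $j$. But the numerator you need, $Y_{I_i}+\sum_{u\in\Gamma_{\lessdot i}^v}Y_{\Gamma_{< i}^v\setminus\{u\}}$, has $1+|\Gamma_{\lessdot i}^v|$ terms, which exceeds two as soon as $i$ has at least two children. (Concretely, for the star with center $1$ and leaves $2,3$ rooted at $1$, the claim is $Y_1Y_2Y_3=Y_{123}+Y_2+Y_3$, a trinomial that no single instance of part~(b) can produce.) So step (4), where you ask $P_S^{ij}P_S^{ji}$ to become the full sum over children, cannot be realized, and the proposed route through part~(b) fails except when $i$ has a unique child. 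A further structural obstruction: the left-hand side you want, $Y_{\{i\}}\cdot Y_{\Gamma_{<i}^v}$, is a product of $1+|\Gamma_{\lessdot i}^v|$ cluster variables, whereas part~(b) only ever multiplies two $Y_{S\oplus\cdot}$ factors against the $Y_{S\ominus\cdot}$ corrections, and $S\oplus j$ is by definition connected, so it cannot account for the disconnected index set $\Gamma_{<i}^v$.

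The fallback you gesture at, ``part (a) with $S=\Gamma_{<i}^v\setminus\{i\}$-type data to isolate $Y_{\{i\}}$,'' is also not viable as stated: part~(a) with $S=\Gamma_{<i}^v$ yields a relation for $X_iY_{I_i}$ (this is exactly Lemma~\ref{lem:X-formula-1}), not for $Y_{\{i\}}$, and it necessarily involves the $X$- and $A$-variables. The missing idea is that one should not try to realize the identity as a single exchange relation at all. The paper instead starts from the determinantal definition in Theorem~\ref{thm:graph-LP-defn}, which gives $Y_{\{i\}}=\big(A_i+\sum_{u\text{ adjacent to }i}X_u\big)/X_i$, substitutes the expression of Lemma~\ref{lem:X-formula-1} for $X_u$ for each child $u\in\Gamma_{\lessdot i}^v$, clears denominators, and then recognizes the block $Y_{\Gamma_{<i}^v}(A_i+X_{i^+})+\sum_{u\in\Gamma_{<i}^v}Y_{\Gamma_{<i}^v\setminus\Gamma_{\geq u}^v}A_u$ as $X_iY_{I_i}$ via Proposition~\ref{prop:exchange-rels}(a) with $S=\Gamma_{<i}^v$, after which the spare factor of $X_i$ cancels and the $1+|\Gamma_{\lessdot i}^v|$ terms emerge, one from the $X_iY_{I_i}$ block and one $Y_{\Gamma_{<i}^v\setminus\{u\}}X_i$ from each child $u$. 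Your edge-case checks (the leaf case reducing to $Y_{\{i\}}=Y_{I_i}$, and the separate treatment of $i=v$) are correct and match the paper, but the core mechanism of the proof is absent from the proposal.
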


\begin{proof}
If $\Gamma_{< i}^v=\varnothing$, then the formula in the proposition reduces to $Y_{\{i\}}=Y_{\{i\}}$.

If $\Gamma_{< i}^v\neq\varnothing$, recall from Theorem~\ref{thm:graph-LP-defn} that $Y_{\{i\}}=\frac{A_i+\sum_{u\text{ adjacent to }i}X_u}{X_i}$. If $i\neq v$, we can rewrite this as $Y_{\{i\}}=\frac{A_i+X_{i^+}+\sum_{u\in\Gamma_{\lessdot i}^v}X_u}{X_i}$. Using Proposition~\ref{lem:X-formula-1}, we can replace all the the $X$'s in the expression that come from elements of $\Gamma_{\lessdot i}^v$ to get
\[ Y_{\{i\}}=\frac{A_i+X_{i^+}+\sum_{u\in\Gamma_{\lessdot i}^v}\frac{Y_{\Gamma_{< u}^v}(X_i+A_u)+\sum_{w\in \Gamma_{< u}^v}Y_{\Gamma_{< u}^v\setminus\Gamma_{\geq w}^v}A_w}{Y_{I_u}}}{X_i}.\]
Giving everything a common denominator, we get
\begin{gather*}
 Y_{\{i\}}=\frac{Y_{\Gamma_{< i}^v}(A_i+X_{i^+})+\sum\limits_{u\in\Gamma_{\lessdot i}^v}\Big(\prod\limits_{w\in\Gamma_{\lessdot i}^v\setminus\{u\}}Y_{I_w}\Big)\Big(Y_{\Gamma_{< u}^v}(X_i+A_u)+\sum\limits_{w\in \Gamma_{< u}^v}Y_{\Gamma_{< u}^v\setminus\Gamma_{\geq w}^v}A_w\Big)}{Y_{\Gamma_{< i}^v}X_i},
 \end{gather*}
which simplifies to
\[ Y_{\{i\}}=\frac{Y_{\Gamma_{< i}^v}(A_i+X_{i^+})+\sum_{u\in\Gamma_{\lessdot i}^v}\Big(Y_{\Gamma_{< i}^v\setminus\{u\}}(X_i+A_u)+\sum_{w\in \Gamma_{< u}^v}Y_{\Gamma_{< i}^v\setminus\Gamma_{\geq w}^v}A_w\Big)}{Y_{\Gamma_{< i}^v}X_i}.\]
Pulling out all the $X_i$ terms, we get
\[ Y_{\{i\}}=\frac{Y_{\Gamma_{< i}^v}(A_i+X_{i^+})+\sum_{u\in\Gamma_{< i}^v}Y_{\Gamma_{< i}^v\setminus\Gamma_{\geq u}^v}A_u+X_i\sum_{u\in\Gamma_{\lessdot i}^v}Y_{\Gamma_{< i}^v\setminus\{u\}}}{Y_{\Gamma_{< i}^v}X_i}.\]
Applying Proposition~\ref{prop:exchange-rels}(a) with $S=\Gamma_{< i}^v$ proves the \new{proposition} in this case.

If $i=v$, we instead get that $Y_{\{v\}}=\frac{A_i+\sum_{u\in\Gamma_{\lessdot v}^v}X_u}{X_v}$. A similar computation to the above proves the \new{proposition} in this case.
\end{proof}

For example, in the tree in Figure \ref{fig:exampletree}, we calculate
\[
Y_{\{2\}} = \frac{Y_{I_2} + Y_{I_4}Y_{I_5} + Y_{I_3}Y_{I_6}Y_{I_7}Y_{I_5} + Y_{I_3}Y_{I_4}Y_{I_8}}{Y_{I_3}Y_{I_4}Y_{I_5}} = \frac{Y_{I_2}}{Y_{I_3}Y_{I_4}Y_{I_5}} + \frac{1}{Y_{I_3}} + \frac{Y_{I_6}Y_{I_7}}{Y_{I_4}} + \frac{Y_{I_8}}{Y_{I_5}}.
\]

Using this formula, we can prove a formula for general sets. It will be useful to define $f(i)=Y_{I_i}+\sum_{u\in\Gamma_{\lessdot i}^v}Y_{\Gamma_{< i}^v\setminus\{u\}}$, the numerator of the fraction from the above proposition. We begin with a few lemmas.

\begin{Lemma}\label{lem:Y-sum}
Let $\Gamma$ be a tree, $S$ be a connected subset of vertices, and $T=\{(a,b)\in S\times S\, |\, a=b^+\}$. Then
\[ Y_S=\sum_{n=0}^{\big\lfloor\frac{|S|}{2}\big\rfloor}(-1)^n\sum_{A\in A(n)}\bigg(\prod_{x\in (S\setminus A')}Y_{\{x\}}\bigg),\]
where $A(n)=\{A\subseteq 2^T\, |\, |A|=n\text{ and }\{a,b\}\cap\{c,d\}=\varnothing\text{ for all }(a,b),(c,d)\in A\text{ with }(a,b)\neq(c,d)\}$ and for any $A\in A(n)$, $A'=\{s\, |\, s\text{ is part of a pair in }A\}$.
\end{Lemma}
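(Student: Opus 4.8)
The plan is to express $Y_S$ recursively in terms of the $Y_{\{x\}}$'s using the determinant formula from Theorem~\ref{thm:graph-LP-defn}, and then solve the recursion by an inclusion-exclusion argument over matchings of the tree $T$ of cover relations on $S$. First I would recall that $Y_S = \det \mathfrak{N}_S$, where $\mathfrak{N}_S$ is the principal submatrix of $\mathfrak{N}$ on rows and columns indexed by $S$. Because $\Gamma$ is a tree and $S$ is connected, $\mathfrak{N}_S$ is (after reordering rows and columns to respect the rooted-tree poset structure) a matrix whose off-diagonal $-1$'s occur exactly at positions $(a,b)$ and $(b,a)$ with $a = b^+$, i.e., at the edges of the induced tree. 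Crucially, the diagonal entry $\mathfrak{n}_{ii}$ is \emph{not} $Y_{\{i\}}$ on the nose: $Y_{\{i\}} = \mathfrak{n}_{ii}$ by definition, so in fact $\mathfrak{n}_{ii} = Y_{\{i\}}$ exactly. So $\mathfrak{N}_S$ is a symmetric matrix with diagonal $(Y_{\{x\}})_{x \in S}$ and $-1$ along the edges of the tree on $S$.

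The key step is then a standard fact: for a symmetric matrix whose off-diagonal support is a forest (here a tree), the determinant expands as a signed sum over partial matchings of that forest. Concretely, expanding $\det \mathfrak{N}_S$ via the permutation/Leibniz formula, the only permutations contributing are products of fixed points and transpositions $(a\,b)$ with $ab$ an edge of the tree, since any longer cycle would require an off-diagonal entry outside the forest. A transposition $(a\,b)$ along an edge contributes a factor $(\operatorname{sgn})\cdot \mathfrak{n}_{ab}\mathfrak{n}_{ba} = (-1)\cdot(-1)(-1) = -1$ — wait, more carefully: $\operatorname{sgn}$ of a transposition is $-1$ and $\mathfrak{n}_{ab}\mathfrak{n}_{ba} = (-1)^2 = 1$, giving $-1$ per matched edge; each unmatched vertex $x$ contributes its diagonal $Y_{\{x\}}$. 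Collecting these, $\det\mathfrak{N}_S = \sum_{M}(-1)^{|M|}\prod_{x \text{ unmatched by }M}Y_{\{x\}}$, where $M$ ranges over matchings (sets of pairwise vertex-disjoint edges) of the tree $T$ on $S$. Identifying a matching $M$ of size $n$ with an element $A \in A(n)$ and its vertex set $A'$ with the set of matched vertices gives exactly the claimed formula, with the outer sum over $n$ from $0$ to $\lfloor |S|/2 \rfloor$ since a matching in a graph on $|S|$ vertices has at most $\lfloor |S|/2\rfloor$ edges.

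I expect the main obstacle to be the careful bookkeeping in the Leibniz-expansion step: one must verify that no permutation other than an involution supported on tree edges contributes (this uses acyclicity of the tree — any cycle in the permutation of length $\geq 3$ would force a directed cycle in the edge set, impossible in a forest), and one must track signs correctly so that a matched edge contributes precisely $-1$. An alternative, perhaps cleaner, route avoids the explicit Leibniz expansion: induct on $|S|$, pick a leaf $\ell$ of the tree $T$ with neighbor $\ell^+ =: p$, and do cofactor expansion of $\det\mathfrak{N}_S$ along the row of $\ell$. This gives $Y_S = Y_{\{\ell\}}\,\det\mathfrak{N}_{S\setminus\ell} - \det\mathfrak{N}_{S\setminus\{\ell,p\}}$ (the cross term picks up $\mathfrak{n}_{\ell p}\mathfrak{n}_{p\ell} = 1$ with a sign $-1$ from the cofactor structure, and deleting both $\ell$ and $p$ leaves the appropriate principal minor since $\ell$ was a leaf). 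Applying the induction hypothesis to the two smaller connected-or-forest index sets and grouping matchings according to whether they use the edge $\ell p$ then yields the stated identity directly; the forest (rather than connected) case for $S\setminus\{\ell,p\}$ is handled by the product-over-components convention for $Y$-variables, exactly as in the proof of Lemma~\ref{lem:in-cluster}. I would present this inductive version, as it sidesteps the sign-chasing while still making the matching combinatorics transparent.
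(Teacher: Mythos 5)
Your main argument --- expanding $\det \mathfrak{N}_S$ by the Leibniz formula, observing that acyclicity of $\Gamma$ forces every contributing permutation to be a product of disjoint transpositions along edges of the induced tree on $S$, and then reading off the sign $(-1)^n$ from the $n$ transpositions, the factor $\mathfrak{n}_{ab}\mathfrak{n}_{ba}=1$ from each matched edge, and the diagonal factor $Y_{\{x\}}=\mathfrak{n}_{xx}$ from each fixed point --- is exactly the proof given in the paper. The alternative leaf-cofactor induction you sketch at the end is also sound (provided one extends the statement to forests via the product-over-components convention for $Y$-variables), but it is a reformulation rather than a necessary repair, since the sign-chasing in the direct expansion is already trivial: every transposition contributes the same factor $-1$.
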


\begin{proof}
Recall from Theorem~\ref{thm:graph-LP-defn} that $Y_S$ is the determinant of matrix $\mathfrak{N}_S^S$. Consider a nonzero term in this determinant that comes from a permutation $\sigma$ where $\sigma(i)=j$. Writing $\sigma$ in cycle notation, there must be a cycle $(ijv_1\dots v_r)$. In order for this term to be nonzero, there must be edges in $\Gamma$ from $i$ to $j$, $j$ to $v-1$, and so on. Since $\Gamma$ is a tree, the only way this can happen is for $j=i$ or for the cycle in $\sigma$ to be $(ij)$. This tells us that any time there is a nonzero term in this determinant that uses entry $\mathfrak{n}_{ij}$ for $i\neq j$, it must also use entry $\mathfrak{n}_{ji}$. Since $\mathfrak{n}_{ij}=\mathfrak{n}_{ji}=-1$, this term is the product of all the other matrix entries given by the permutation.

Thus, we can break down the formula in the lemma as follows. We sum over $n$ the number of possible cycles in a permutation that gives a nonzero term in the determinant. Since all of the cycles are transpositions, the length of each permutation has the same parity as the number of cycles, giving us the $(-1)^n$. Once we have fixed $n$, we sum over all possible $n$-tuples of cycles, each given by a set $A$. If the cycles of a permutation are given by $A$, then the fixed points of the permutation are given by $S\setminus A'$. Each $x\in S\setminus A'$ contributes $Y_{\{x\}}$ to the term in the determinant we get from this permutation. The cycles each contribute 1.
\end{proof}

\begin{Lemma}\label{lem:Y-term-containment}
Let $Y_S^{(n)}=\sum_{A\in A
(n)}\big(\prod_{x\in (S\setminus A')}Y_{\{x\}}\big)$. If $t$ is a monomial that appears as a term in $Y_S^{(m+1)}$, then $t$ also appears in $Y_S^{(m)}$.
\end{Lemma}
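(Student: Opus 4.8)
The plan is to reduce the claim to a purely combinatorial statement about the sets $A(m)$ and $A(m+1)$ indexing the two sums, using the product structure of the terms. Recall from Lemma~\ref{lem:Y-term-containment}'s setup that $Y_S^{(n)}=\sum_{A\in A(n)}\prod_{x\in S\setminus A'}Y_{\{x\}}$, where $A$ ranges over $n$-element sets of pairwise vertex-disjoint edges of $T$ (i.e.\ partial matchings of size $n$ in the tree restricted to $S$), and $A'$ is the set of vertices saturated by $A$. So a monomial $t$ appearing in $Y_S^{(m+1)}$ is, after collecting, of the form $\prod_{x\in S\setminus B'}Y_{\{x\}}$ for some partial matching $B$ of size $m+1$ (possibly with several matchings $B$ giving the same monomial, but we only need one). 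The goal is to produce a partial matching $A$ of size $m$ with $S\setminus A'$ giving the \emph{same} monomial after expansion, i.e.\ such that $\prod_{x\in S\setminus A'}Y_{\{x\}}$ equals $t$ as a monomial in the $Y_{\{x\}}$'s.

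First I would observe that if we simply delete one edge from $B$ to get a matching $A$ of size $m$, then $A'=B'\setminus\{a,b\}$ where $\{a,b\}$ is the deleted edge, so $S\setminus A' = (S\setminus B')\sqcup\{a,b\}$; this changes the monomial by an extra factor of $Y_{\{a\}}Y_{\{b\}}$, so that is not directly what we want. The key point must therefore be that $Y_{\{a\}}Y_{\{b\}}$ is not actually a \emph{monomial} — each $Y_{\{x\}}$ is itself a polynomial (in the $Y_{I_x}$'s, by Proposition~\ref{prop:Y-single-formula}) — so "monomial that appears as a term" should be interpreted after fully expanding everything in terms of a fixed set of variables (the $Y_{I_x}$ or the $X_i$). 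Hence the right reading of the statement is: every monomial (in the underlying variables) occurring with nonzero coefficient in the \emph{expansion} of $Y_S^{(m+1)}$ also occurs in the expansion of $Y_S^{(m)}$. Under that reading the natural approach is: take a monomial $t$ of $Y_S^{(m+1)}$, trace it back to a matching $B$ of size $m+1$ and a choice, for each $x\in S\setminus B'$, of a monomial $t_x$ from the expansion of $Y_{\{x\}}$, so that $t=\prod_{x\in S\setminus B'}t_x$. Now pick any edge $(a,b)\in B$ and set $A=B\setminus\{(a,b)\}$; I claim one can choose monomials $t_a'$ from $Y_{\{a\}}$ and $t_b'$ from $Y_{\{b\}}$ with $t_a't_b' = $ (something already absorbed), more precisely that $t$ itself arises from $A$ by choosing $t_x$ for $x\in S\setminus B'$ as before and suitable monomials for the two newly-freed vertices $a,b$ whose product is $1$ — which is impossible unless $Y_{\{a\}}Y_{\{b\}}$ has $1$ as a monomial.

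So the genuine content, and the step I expect to be the main obstacle, is showing that for an edge $(a,b)$ of the tree with $a=b^+$, the product $Y_{\{a\}}Y_{\{b\}}$ (expanded in the rooted-cluster variables, or in the $X$'s) shares a monomial with $Y_{\{a\}}\cdot 1$ in a way compatible with the bookkeeping — equivalently that $Y_{\{b\}}$, expanded, has $1$ occurring as a monomial, OR, more likely what actually happens, that the \emph{leading/trailing} behaviour works out via Proposition~\ref{prop:exchange-rels}(b). Indeed I suspect the intended proof uses the size-two exchange relation $Y_{\{a\}}Y_{I_b}\,(\text{or similar}) = Y_{ab}Y_{\emptyset}+P^{ab}P^{ba}/(\cdots)$ specialized to adjacent $a,b$: then $Y_S^{(m+1)}$'s terms, which come from using the edge $(a,b)$, get rewritten so that the "$Y_{ab}Y_\emptyset$" piece reproduces exactly the corresponding term of $Y_S^{(m)}$. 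Concretely, I would: (1) fix $t\in Y_S^{(m+1)}$ and the matching $B$ realizing it; (2) choose an edge $e=(a,b)\in B$; (3) apply Proposition~\ref{prop:Y-single-formula} to expand $Y_{\{a\}}$ and $Y_{\{b\}}$ and show the monomial $t_a t_b$ of $Y_{\{a\}}Y_{\{b\}}$ contributing to $t$ also appears in the expansion of a single $Y_{\{x\}}$-product coming from $B\setminus\{e\}$, by matching up the explicit numerators $f(a)$, $f(b)$; (4) conclude $t$ appears in $Y_S^{(m)}$. The crux is step (3): verifying that $f(a)f(b)$ — up to the denominators that are common to all terms of $Y_S^{(n)}$ — has all its monomials among those of $f(x)$-products with one fewer factor of the "edge" type; this should follow from $Y_{\{a\}}$ always contributing the monomial $Y_{I_a}/Y_{\Gamma_{<a}^v}$ (from the leading term $Y_{I_a}$ of $f(a)$) together with $Y_{\{b\}}$ contributing its $1$-term when $a^+=$ ... — in short, a direct but slightly delicate comparison of the explicit formulas in Proposition~\ref{prop:Y-single-formula}, which is why it is the main obstacle rather than the setup.
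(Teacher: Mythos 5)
Your reduction is exactly the one the paper uses: interpret ``monomial'' after fully expanding everything in the $Y_{I_x}$'s, pass from a size-$(m+1)$ matching to the size-$m$ matching obtained by deleting one pair $(p,q)$ with $p=q^+$, and observe that it suffices to exhibit monomials $t_p'$ of $Y_{\{p\}}$ and $t_q'$ of $Y_{\{q\}}$ with $t_p't_q'=1$ (positivity of all coefficients rules out cancellation, so a monomial of one summand survives in the full sum $Y_S^{(m)}$). But the step you yourself flag as ``the main obstacle'' is the entire content of the lemma, and you leave it open while offering two candidate mechanisms, one of which is false and one of which is a detour. The false one: it is \emph{not} equivalent to, and is not in general true, that $Y_{\{q\}}$ alone contains $1$ as a monomial --- its numerator $f(q)=Y_{I_q}+\sum_{u\in\Gamma_{\lessdot q}^v}Y_{\Gamma_{<q}^v\setminus\{u\}}$ need not contain the term $Y_{\Gamma_{<q}^v}$. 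The detour: Proposition~\ref{prop:exchange-rels}(b) plays no role here.

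What closes the gap is a short pairing inside $f(p)f(q)$ that uses precisely the hypothesis that $(p,q)$ is a pair of the matching, i.e., $q\in\Gamma_{\lessdot p}^v$: the sum defining $f(p)$ therefore has a $u=q$ term $Y_{\Gamma_{<p}^v\setminus\{q\}}$, and since $\Gamma_{<p}^v\setminus\{q\}$ decomposes into the components $I_u$ for $u\in\Gamma_{\lessdot p}^v$ with $u\neq q$ together with the components of $\Gamma_{<q}^v$, one gets $Y_{\Gamma_{<p}^v\setminus\{q\}}=Y_{\Gamma_{<p}^v}Y_{\Gamma_{<q}^v}/Y_{I_q}$. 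Hence the monomial $Y_{\Gamma_{<p}^v\setminus\{q\}}/Y_{\Gamma_{<p}^v}$ of $Y_{\{p\}}$ times the monomial $Y_{I_q}/Y_{\Gamma_{<q}^v}$ of $Y_{\{q\}}$ equals $1$; equivalently, $f(p)f(q)$ contains the term $Y_{\Gamma_{<p}^v}Y_{\Gamma_{<q}^v}$, which is exactly the factor by which the numerators of the two matchings' contributions differ over the common denominator $\prod_{x\in S}Y_{\Gamma_{<x}^v}$. This identity is the whole proof in the paper; without it your argument is a correct outline with its only nontrivial step unverified, and with one of your two proposed ways of verifying it leading to a dead end.
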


\begin{proof}
Let $A=\{(s_1,s_2),\dots,(s_{2m-1},s_{2m}),(p,q)\}\in A(m+1)$. Then $B=\{(s_1,s_2),\dots,(s_{2m-1},\allowbreak s_{2m})\}$ is in $A(m)$. We have the following equalities:
\begin{gather*}
\prod_{x\in (S\setminus A')}Y_{\{x\}} =\frac{\prod_{x\in (S\setminus A')}f(x)}{\prod_{x\in (S\setminus A')}Y_{\Gamma_{< x}^v}}=\frac{Y_{\Gamma_{< p}^v}Y_{\Gamma_{< q}^v}\prod_{x\in (S\setminus A')}f(x)}{\prod_{x\in (S\setminus B')}Y_{\Gamma_{< x}^v}}, \\
\prod_{x\in (S\setminus B')}Y_{\{x\}} =\frac{\prod_{x\in (S\setminus B')}f(x)}{\prod_{x\in (S\setminus B')}Y_{\Gamma_{< x}^v}}=\frac{f(p)f(q)\prod_{x\in (S\setminus A')}f(x)}{\prod_{x\in (S\setminus B')}Y_{\Gamma_{< x}^v}},\\
f(p)f(q)= \bigg(Y_{I_{p}}+\sum_{u\in\Gamma_{\lessdot p}^v}\big(Y_{\Gamma_{< p}^v\setminus\{u\}}\big)\bigg)\bigg(Y_{I_{q}}+\sum_{u\in\Gamma_{\lessdot q}^v}Y_{\Gamma_{< q}^v\setminus\{u\}}\bigg)\\
\hphantom{f(p)f(q)}{}
= \bigg(Y_{I_{p}}+\sum_{\substack{u\in\Gamma_{\lessdot p}^v\\u\neq q}}Y_{\Gamma_{< p}^v\setminus\{u\}}\bigg)\bigg(Y_{I_{q}}+\sum_{u\in\Gamma_{\lessdot q}^v}Y_{\Gamma_{< q}^v\setminus\{u\}}\bigg) \\
\hphantom{f(p)f(q)=}{}+ Y_{\Gamma_{< p}^v\setminus\{q\}}\bigg(Y_{I_{q}}+\sum_{u\in\Gamma_{\lessdot q}^v}Y_{\Gamma_{< q}^v\setminus\{u\}}\bigg)\\
\hphantom{f(p)f(q)}{} = \bigg(Y_{I_{p}}+\sum_{\substack{u\in\Gamma_{\lessdot p}^v\\u\neq q}}Y_{\Gamma_{< p}^v\setminus\{u\}}\bigg)\bigg(Y_{I_{q}}+\sum_{u\in\Gamma_{\lessdot q}^v}Y_{\Gamma_{< q}^v\setminus\{u\}}\bigg)\\
\hphantom{f(p)f(q)=}{} + Y_{\Gamma_{< p}^v}Y_{\Gamma_{< q}^v}+Y_{\Gamma_{< p}^v\setminus\{q\}}\sum_{u\in\Gamma_{\lessdot q}^v}Y_{\Gamma_{< q}^v\setminus\{u\}}.
\end{gather*}

Notice that $Y_{\Gamma_{< p}^v}Y_{\Gamma_{< q}^v}$ appears in $f(p)f(q)$. This means that if $t$ is a monomial that appears in $\prod_{x\in (S\setminus A')}Y_{\{x\}}$, it must also appear in $\prod_{x\in (S\setminus B')}Y_{\{x\}}$.
\end{proof}

\begin{Theorem}\label{thm:Y-formula}
Let $\Gamma$ be a tree rooted at $v$. For any subset $S$ of the vertices of $\Gamma$, we have
\[ Y_S=\sum_{\substack{O\subseteq S\text{ containing all}\\ \text{minimal elements of }\Gamma\text{ in }S}} \, \sum_{\substack{u\colon S\setminus O\to V(\Gamma)\\ u(x)\in\Gamma_{\lessdot x}^v\setminus O}}\frac{\left(\prod_{x\in O}Y_{I_x} \right)\left(\prod_{x\in S\setminus O}Y_{\Gamma_{< x}^v\setminus\{u(x)\}} \right)}{\prod_{x\in S}Y_{\Gamma_{< x}^v}}.\]
\end{Theorem}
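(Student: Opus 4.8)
The plan is to derive this closed formula for $Y_S$ by combining the inclusion-exclusion expansion from Lemma~\ref{lem:Y-sum} with the single-vertex formula from Proposition~\ref{prop:Y-single-formula}, and then showing that all cancellation among the resulting terms is accounted for exactly by the indexing set in the theorem. First I would substitute $Y_{\{x\}} = f(x)/Y_{\Gamma_{<x}^v}$ into the expression $Y_S = \sum_n (-1)^n Y_S^{(n)}$ from Lemma~\ref{lem:Y-sum}. For a fixed matching $A \in A(n)$ on the parent-child pairs $T$ of $S$, the term $\prod_{x \in S\setminus A'} Y_{\{x\}}$ has denominator $\prod_{x\in S\setminus A'} Y_{\Gamma_{<x}^v}$; to get the common denominator $\prod_{x\in S} Y_{\Gamma_{<x}^v}$ appearing in the theorem, one multiplies numerator and denominator by $\prod_{(a,b)\in A} Y_{\Gamma_{<a}^v} Y_{\Gamma_{<b}^v}$ (as was already done in the first displayed equation of the proof of Lemma~\ref{lem:Y-term-containment}). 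So the whole sum becomes $\frac{1}{\prod_{x\in S} Y_{\Gamma_{<x}^v}}$ times an alternating sum over matchings $A$ of $\bigl(\prod_{(a,b)\in A} Y_{\Gamma_{<a}^v}Y_{\Gamma_{<b}^v}\bigr)\bigl(\prod_{x\in S\setminus A'} f(x)\bigr)$.

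Next I would expand each $f(x) = Y_{I_x} + \sum_{w\in\Gamma_{\lessdot x}^v} Y_{\Gamma_{<x}^v\setminus\{w\}}$ fully over the product $\prod_{x\in S\setminus A'} f(x)$, so that the numerator is a sum over functions assigning to each $x \in S\setminus A'$ either the ``diagonal'' choice $Y_{I_x}$ or an ``off-diagonal'' choice $Y_{\Gamma_{<x}^v\setminus\{w\}}$ for some child $w$ of $x$. The claim of the theorem is that after the alternating sum over $A$, the surviving monomials are exactly those indexed by a pair $(O, u)$ where $O\subseteq S$ contains all $\Gamma$-minimal elements of $S$ (these are forced to take the diagonal choice $Y_{I_x}$, since a minimal vertex has no children in $S$), and $u\colon S\setminus O\to V(\Gamma)$ picks a child $u(x)\in\Gamma_{\lessdot x}^v\setminus O$. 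The key identity driving the cancellation is the one isolated in Lemma~\ref{lem:Y-term-containment}: whenever $(p,q)$ is a matched parent-child pair, $f(q)$ contains the summand $Y_{\Gamma_{<q}^v\setminus\{\text{nothing}\}}$ only through its diagonal piece, and crucially $f(p)f(q)$ contains the term $Y_{\Gamma_{<p}^v}Y_{\Gamma_{<q}^v}$, which is exactly the factor introduced by the common-denominator step for the matched pair with the empty choice at $p$ and $q$. I would set up a sign-reversing involution: given any contribution in the expansion, locate a minimal (in some fixed linear extension) parent-child pair $(p,q)$ in $S$ such that $q$ receives the diagonal choice $Y_{I_q}$ and $p$ receives either the choice $Y_{\Gamma_{<p}^v\setminus\{q\}}$ or is matched to $q$ in $A$ with the corresponding $Y_{\Gamma_{<p}^v}Y_{\Gamma_{<q}^v}$ factor; toggling whether $(p,q)\in A$ pairs up such contributions with opposite signs and equal monomials, so they cancel. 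What survives are precisely the contributions with $A=\varnothing$ in which no such cancellable configuration occurs, and a careful bookkeeping shows these are indexed by $(O,u)$ as stated — with the constraint $u(x)\notin O$ being exactly the condition that the off-diagonal choice at $x$ cannot be ``absorbed'' by a matched pair.

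The main obstacle I expect is making the sign-reversing involution genuinely well-defined and showing it has no fixed points except the claimed surviving terms. The subtlety is that the expanded numerator is a sum of monomials in the $Y_{I_z}$ (after applying Lemma~\ref{lem:in-cluster} to rewrite each $Y_{\Gamma_{<x}^v}$ and $Y_{\Gamma_{<x}^v\setminus\{w\}}$ as a monomial in the cluster variables), and different $(A, \text{choice})$ data can a priori produce the same monomial, so one must verify that the pairing respects the actual monomials and not merely the formal data. I would handle this by choosing a fixed linear extension of the vertex poset on $S$ and always acting on the lexicographically-first vertex $q$ that is either a fixed point with diagonal choice whose parent is ``available'', or a matched lower element — this gives a canonical, self-inverse rule. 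An alternative, possibly cleaner route is to induct on $|S|$: peel off a minimal vertex $m$ of $S$, write $Y_S$ via the exchange relation Proposition~\ref{prop:exchange-rels}(b) or via cofactor expansion of the determinant $\det\mathfrak{N}_S^S$ along the row of $m$, and check that the two resulting terms ($Y_{I_m}\cdot Y_{S\setminus\{m\}}$-type and $Y_{\Gamma_{<m^+}^v\setminus\{m\}}$-type) match the split of the claimed formula according to whether $m\in O$ or $m\notin O$ with $u(m^+)=m$; the inductive hypothesis then applies to a graph LP algebra on a smaller tree. I would pursue whichever of these two is less computationally heavy, but either way the crux is a clean combinatorial accounting of which terms cancel, and I expect that to be where the real work lies.
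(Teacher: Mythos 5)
Your proposal is correct and follows essentially the same route as the paper: expand $Y_S$ via Lemma~\ref{lem:Y-sum}, substitute $Y_{\{x\}}=f(x)/Y_{\Gamma_{< x}^v}$, and cancel using the identity $Y_{\Gamma_{< p}^v\setminus\{q\}}Y_{I_q}=Y_{\Gamma_{< p}^v}Y_{\Gamma_{< q}^v}$ isolated in Lemma~\ref{lem:Y-term-containment}; the paper merely packages the cancellation as the count $\sum_{A\subseteq P_t^e}(-1)^{|A|}$, which vanishes unless $P_t^e=\varnothing$, where you phrase it as a sign-reversing involution toggling one canonical matched pair (an equivalent bookkeeping that, if anything, sidesteps the monomial-collision worry you raise, since the involution acts on the formal data $(A,\text{choices})$). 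One small quibble: the reason $\Gamma$-minimal elements of $S$ are forced into $O$ is that they have no children in $\Gamma$ at all (so $f(x)=Y_{I_x}$ has only the diagonal term), not merely no children in $S$.
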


\begin{proof}
By Lemma~\ref{lem:Y-term-containment}, if we expand the sum given in Lemma~\ref{lem:Y-sum}, the only monomials that will appear are those that appear in the expansion of $\prod_{x\in S}Y_{\{x\}}$. We will count how many times each of these monomials appear the sum in Lemma~\ref{lem:Y-sum} with signs.

Since
\[ \prod_{x\in S}Y_{\{x\}}=\frac{\prod_{x\in S}f(x)}{\prod_{x\in S}Y_{\Gamma_{< x}^v}},\]
we will consider a monomial in this expansion
\[ t=\frac{t'}{\prod_{x\in S}Y_{\Gamma_{< x}^v}}.\]
We obtain $t'$ by multiplying the terms $Y_{I_x}$ from $f(x)$ for some set $O_t$ and then multiplying the terms $Y_{\Gamma_{< x}^v\setminus\{u_t(x)\}}$ for some choice of $u_t(x)\in\Gamma_{\lessdot x}^v$ for each $x\in S\setminus O_t$. Then \[ t'=\bigg(\prod_{x\in O_t}Y_{I_x} \bigg)\bigg(\prod_{x\in S\setminus O_t}Y_{\Gamma_{< x}^v\setminus\Gamma_{\geq u_t(x)}^v} \bigg).\]
Notice that every minimal element of $\Gamma$ that is in $S$ must be in $O_t$ for every $t$.

Let $P_t$ be the set of maximal chains in $(S\setminus O_t)$ that follow $u_t$. That is,
\[ P_t=\{(x_1,\dots,x_n)\, |\, x_i\in S\setminus O_t;\ x_{i+1}=u_{x_i}(t);\ (x_1,\dots,x_n)\text{ is maximal}\}.\]
Every element of $S\setminus O_t$ is in exactly one chain of $P_t$. Let \[ P_t^e=\{(x,y)\, |\, \text{there is a chain in }P_t\text{ that ends with }x;\ u_t(x)=y\in S\}.\]
We can see that if $(x,y)\in P_t^e$, then $t'$ contains a factor $Y_{\Gamma_{< x}^v}Y_{\Gamma_{< y}^v}$. Further, if
\[
F_t=\{x\, |\, (x,y)\in P_t^e\text{ for some }y\} \qquad \text{and} \qquad L_t=\{y\, |\, (x,y)\in P_t^e\text{ for some }x\},
\] then we have
\[ t'=\bigg(\prod_{x\in F_t\cup L_t}Y_{\Gamma_{< x}^v}\bigg)\bigg(\prod_{x\in O_t\setminus L_t}Y_{I_x} \bigg)\bigg(\prod_{x\in S\setminus O_t\setminus F_t}Y_{\Gamma_{< x}^v\setminus\{u_t(x)\}} \bigg),\]
because the pairs in $P_t^e$ are disjoint by construction. This means $t'$ is a term in the product $\big(\prod_{x\in F_t\cup L_t}Y_{\Gamma_{< x}^v}\big)\big(\prod_{x\in S\setminus F_t\setminus L_t} f(x) \big)$.

Recall that for $A\in A(n)$,
\[ \prod_{x\in (S\setminus A')}Y_{\{x\}}=\frac{\prod_{x\in (S\setminus A')}f(x)}{\prod_{x\in (S\setminus A')}Y_{\Gamma_{< x}^v}}=\frac{\left(\prod_{S\in A'} Y_{\Gamma_{< x}^v}\right)\prod_{x\in (S\setminus A')}f(x)}{\prod_{x\in S}Y_{\Gamma_{< x}^v}}.\]
Since the numerator of this fraction contains $\prod_{S\in A'} Y_{\Gamma_{< x}^v}$, the term $t$ appears in $\prod_{x\in (S\setminus A')}Y_{\{x\}}$ exactly when $A'\subseteq F_t\cup L_t$. Since $F_t\cup L_t$ uniquely determines $P_t^e$, the term $t$ appears in $\prod_{x\in (S\setminus A')}Y_{\{x\}}$ exactly when $A\subseteq P_t^e$. Thus if $|P_t^e|=m$, when we count the number of times this~$t$ appears with sign, we get ${m \choose 0}-{m \choose 1}+\dots+(-1)^{m}{m \choose m}$. This is 0 if $m>0$ and 1 if $m=0$.

This means we want to sum over all possible choices of $O_t$ and $u_t$ that give rise to $P_t^e=\varnothing$. We have $P_t^e=\varnothing$, exactly when every chain in $P_t$ ends in an element $x$ such that $u_t(x)\not\in S$. Given $O_t$, this is equivalent to $u_t(x)\not\in O_t$ for every $x\in S\setminus O_t$. Summing over all subsets of $S$ that contain every minimal element and then all functions $u$ that meet these conditions gives us our formula.
\end{proof}

\begin{Example}Consider the following graph $\Gamma$ rooted at 1:
\begin{center}
\begin{tikzpicture}
\node at (0.3,0) {1};
\node at (-0.8,-1) {2};
\node at (0.8,-1) {3};
\node at (-1.4,-2) {4};
\node at (0,-2.3) {5};
\node at (1.3,-2) {6};
\node at (1.3,-3) {7};
\draw [line width=0.25mm, fill=black] (0,0) circle (0.75mm);
\draw [line width=0.25mm, fill=black] (-0.5,-1) circle (0.75mm);
\draw [line width=0.25mm, fill=black] (0.5,-1) circle (0.75mm);
\draw [line width=0.25mm, fill=black] (-1,-2) circle (0.75mm);
\draw [line width=0.25mm, fill=black] (0,-2) circle (0.75mm);
\draw [line width=0.25mm, fill=black] (1,-2) circle (0.75mm);
\draw [line width=0.25mm, fill=black] (1,-3) circle (0.75mm);
\draw (0,0) -- (-0.5,-1)
(0,0) -- (0.5,-1)
(-0.5,-1) -- (-1,-2)
(-0.5,-1) -- (0,-2)
(0.5,-1) -- (1,-2)
(1,-2) -- (1,-3);
\end{tikzpicture}
\end{center}

Let $S=\{1,3\}$. The possibilities for $O\subset S$ such that $O$ contains all leaves in $S$ are $\varnothing$,~$\{1\}$,~$\{3\}$, and $\{1,3\}$, as $S$ does not contain any minimal elements of the rooted tree. First consider when $O=\varnothing$. In this case, the domain of $u$ is $\{1,3\}$. Since 3 covers only 6, $u(3)=6$. Both~2 and~3 are covered by 1 and are not in $O$, so we could have $u(1)=2$ or $u(1)=3$. This gives us that $O=\varnothing$ contributes
\[ \frac{Y_{\Gamma_{< 1}^1\setminus\Gamma_{\geq 2}^1}Y_{\Gamma_{< 3}^1\setminus\Gamma_{\geq 6}^1}}{Y_{\Gamma_{< 1}^1}Y_{\Gamma_{< 3}^1}}+\frac{Y_{\Gamma_{< 1}^1\setminus\Gamma_{\geq 3}^1}Y_{\Gamma_{< 3}^1\setminus\Gamma_{\geq 6}^1}}{Y_{\Gamma_{< 1}^1}Y_{\Gamma_{< 3}^1}}\]
to $Y_S$.

Now we consider $O=\{1\}$. The domain of $u$ is then $\{3\}$ and $u(3)$ must be 6, as above. This means that $O=\{1\}$ contributes \[ \frac{Y_{I_1}Y_{\Gamma_{< 3}^1\setminus\Gamma_{\geq 6}^1}}{Y_{\Gamma_{< 1}^1}Y_{\Gamma_{< 3}^1}}.\]
If $O=\{3\}$ then the domain of $u$ is $\{1\}$. Although 1 covers both 2 and 3, $3\in O$, and therefore~$u(1)$ has to be~2. This means that $O=\{3\}$ contributes
\[ \frac{Y_{I_3}Y_{\Gamma_{< 1}^1\setminus\Gamma_{\geq 2}^1}}{Y_{\Gamma_{< 1}^1}Y_{\Gamma_{< 3}^1}}.\]
Finally, if $O=\{1,3\}$ then the domain of $u$ is $\varnothing$ and we get that $O=\{1,3\}$ contributes \[ \frac{Y_{I_1}Y_{I_3}}{Y_{\Gamma_{< 1}^1}Y_{\Gamma_{< 3}^1}}.\]

All together, this gives us
\begin{align*}
Y_S&=\frac{Y_{\Gamma_{< 1}^1\setminus\Gamma_{\geq 2}^1}Y_{\Gamma_{< 3}^1\setminus\Gamma_{\geq 6}^1}+Y_{\Gamma_{< 1}^1\setminus\Gamma_{\geq 3}^1}Y_{\Gamma_{< 3}^1\setminus\Gamma_{\geq 6}^1}+Y_{I_1}Y_{\Gamma_{< 3}^1\setminus\Gamma_{\geq 6}^1}+Y_{I_3}Y_{\Gamma_{< 1}^1\setminus\Gamma_{\geq 2}^1}+Y_{I_1}Y_{I_3}}{Y_{\Gamma_{< 1}^1}Y_{\Gamma_{< 3}^1}}\\
&=\frac{Y_{367}Y_4Y_5Y_7+Y_{245}Y_{67}Y_7+Y_{1234567}Y_7+Y_{367}Y_{367}Y_4Y_5+Y_{1234567}Y_{367}}{Y_{245}Y_{367}Y_{67}}.
\end{align*}
\end{Example}

As with the $X$-variables, we get positivity as a corollary.

\begin{Corollary}\label{cor:Y-pos}
Let $\Gamma$ be a tree and $\mathcal{C}$ a rooted cluster for $\Gamma$. For any set $S$ of vertices of $\Gamma$, $Y_S$ can be expressed as a Laurent polynomial in the elements of $\mathcal{C}$ with positive coefficients.
\end{Corollary}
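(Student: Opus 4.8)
The plan is to deduce Corollary~\ref{cor:Y-pos} directly from the explicit expansion in Theorem~\ref{thm:Y-formula}, together with the ``in-cluster'' identities of Lemma~\ref{lem:in-cluster}. The starting point is the observation that Theorem~\ref{thm:Y-formula} already expresses $Y_S$ as a sum, over pairs $(O,u)$ where $O\subseteq S$ contains every minimal element of $\Gamma$ lying in $S$ and $u\colon S\setminus O\to V(\Gamma)$ satisfies $u(x)\in\Gamma_{\lessdot x}^v\setminus O$, of the quantities
\[
\frac{\Big(\prod_{x\in O}Y_{I_x}\Big)\Big(\prod_{x\in S\setminus O}Y_{\Gamma_{< x}^v\setminus\{u(x)\}}\Big)}{\prod_{x\in S}Y_{\Gamma_{< x}^v}}.
\]
Each such summand has manifestly nonnegative (in fact, all equal to $1$) integer coefficients as written; what remains is to rewrite every $Y$ appearing here as a monomial in the rooted-cluster variables $\{Y_{I_x}\}_{x\in[n]}$, so that the whole expression becomes a genuine Laurent polynomial in those variables with nonnegative coefficients. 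Since a sum of monomials with positive coefficients is a Laurent polynomial with positive coefficients, this suffices.

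First I would note that for a rooted cluster $\mathcal{C}=\mathcal{C}_v$, the variables $Y_{I_x}$, $x\in[n]$, are exactly the elements of $\mathcal{C}$, so it is enough to show each factor in the displayed fraction is a monomial in these. For the denominator this is immediate from the first identity of Lemma~\ref{lem:in-cluster}: $Y_{\Gamma_{< x}^v}=\prod_{u\in\Gamma_{\lessdot x}^v}Y_{I_u}$, a monomial in cluster elements. For a factor $Y_{\Gamma_{< x}^v\setminus\{u(x)\}}$ in the numerator, observe that $u(x)\in\Gamma_{\lessdot x}^v$, so $\Gamma_{\geq u(x)}^v\cap\Gamma_{<x}^v=\Gamma_{\geq u(x)}^v$ and hence $\Gamma_{< x}^v\setminus\{u(x)\}=\Gamma_{< x}^v\setminus\Gamma_{\geq u(x)}^v$ (the set $\Gamma_{<x}^v$ contains all of $\Gamma_{\geq u(x)}^v$ whenever it contains $u(x)$, because $\Gamma_{<x}^v$ is downward closed relative to... more precisely, $\Gamma_{<x}^v=\bigsqcup_{w\in\Gamma_{\lessdot x}^v}I_w$ and $I_{u(x)}=\Gamma_{\geq u(x)}^v\cap\Gamma_{\le x}^v$, wait---$I_{u(x)}=\Gamma_{\le u(x)}^v\ne\Gamma_{\geq u(x)}^v$). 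Let me instead invoke the second identity of Lemma~\ref{lem:in-cluster} directly with $i=x$ and $u=u(x)$: it gives $Y_{\Gamma_{< x}^v\setminus\Gamma_{\geq u(x)}^v}=\prod_{w\in\Gamma_{\le x}^v\cap\Gamma_{\geq u(x)}^v}\prod_{y\in\Gamma_{\lessdot w}^v\setminus\Gamma_{\geq u(x)}^v}Y_{I_y}$, again a monomial in the $Y_{I_y}$; and since in the proof of Theorem~\ref{thm:Y-formula} the term $Y_{\Gamma_{<x}^v\setminus\{u_t(x)\}}$ is the same as $Y_{\Gamma_{<x}^v\setminus\Gamma_{\geq u_t(x)}^v}$ (the set-theoretic reduction used there, valid because $u(x)$ is a cover of $x$), this applies verbatim.

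Putting these together: each summand of Theorem~\ref{thm:Y-formula} equals a ratio of two monomials in the rooted-cluster variables, hence is itself a Laurent monomial in $\mathcal{C}$ with coefficient $1>0$; summing over the finitely many admissible pairs $(O,u)$ expresses $Y_S$ as a Laurent polynomial in the elements of $\mathcal{C}$ with nonnegative integer coefficients. Combined with Corollary~\ref{cor:X-pos}, which gives the analogous statement for the $X$-variables, and with Theorem~\ref{thm:graph-LP-defn}, which says that the only cluster variables of $\mathcal{A}_\Gamma$ are the $X_i$ and the $Y_S$ for connected $S$, this also yields Theorem~\ref{thm:main1}. I do not expect a genuine obstacle here: the entire content has been front-loaded into Theorem~\ref{thm:Y-formula} and Lemma~\ref{lem:in-cluster}. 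The only point that needs a line of care is the bookkeeping identification $\Gamma_{<x}^v\setminus\{u(x)\}=\Gamma_{<x}^v\setminus\Gamma_{\geq u(x)}^v$ for $u(x)\lessdot x$, so that the second clause of Lemma~\ref{lem:in-cluster} is literally applicable and no stray non-cluster variable survives in the final expression.
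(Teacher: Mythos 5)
Your proposal is correct and follows essentially the same route as the paper, which likewise treats this as an immediate consequence of Theorem~\ref{thm:Y-formula} together with Lemma~\ref{lem:in-cluster}: each summand is a Laurent monomial in the $Y_{I_x}$ because $\Gamma_{<x}^v\setminus\{u(x)\}=\Gamma_{<x}^v\setminus\Gamma_{\geq u(x)}^v$ when $u(x)\lessdot x$ (since $\Gamma_{\geq u(x)}^v\cap\Gamma_{<x}^v=\{u(x)\}$), so both parts of Lemma~\ref{lem:in-cluster} apply. The only blemish is the intermediate assertion $\Gamma_{\geq u(x)}^v\cap\Gamma_{<x}^v=\Gamma_{\geq u(x)}^v$, which is false as written (it would force $x\in\Gamma_{<x}^v$), but you correct course and the conclusion you actually use is the right one.
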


Theorem~\ref{thm:main1} is a direct consequence of Corollaries~\ref{cor:X-pos} and~\ref{cor:Y-pos}.

\section[Hyper T-paths]{Hyper $\boldsymbol{T}$-paths}\label{sec:T-path}

We now develop the tools to prove a combinatorial formula for the $Y$-variables and use that to reprove Corollary~\ref{cor:Y-pos}. Our constructions are a generalization of \emph{$T$-paths}.
The notion of a $T$-path was introduced by Schiffler in \cite{S-08}. Later, Schiffler and Thomas \cite{S-10, ST-09} extended this work to give cluster expansion formulas for cluster algebras from unpunctured surfaces. Subsequently, Gunawan and Musiker~\cite{GM-15} used $T$-paths to give expansion formulas for type $D$ cluster algebras and to prove that in these algebras the cluster monomials form the atomic basis. More recently, a generalization of $T$-paths called \emph{super $T$-paths} were introduced by Ovenhouse, Musiker, and the last author in the context of supersymmetric cluster algebras~\cite{MOZ-21}.

In Section~\ref{sec:T-path-bg}, we review the original $T$-path construction for type $A$ cluster algebras. We then introduce our generalization of the $T$-path construction to \emph{hyper $T$-paths}
and give some motivating examples in Section~\ref{sec:gen-T-paths}. Section~\ref{sec:T-path-lems} gives some additional properties of hyper $T$-paths which are derived from our definition. In Sections~\ref{sec:single-sets}, \ref{sec:gen-sets}, and \ref{sec:pf-thm-2}, we prove that our hyper $T$-path construction gives expansion formulas for graph LP algebras in terms of rooted clusters (Theorem~\ref{thm:main2}).

\subsection[T-paths for type A cluster algebras]{$\boldsymbol{T}$-paths for type $\boldsymbol{A}$ cluster algebras}\label{sec:T-path-bg}
For the purposes of this paper, it will be most useful for us to review the definition of a \emph{complete} $T$-path.
Type $A$ cluster algebras are modeled by triangulations of an $(n+3)$-gon, with each initial seed corresponding to a unique initial triangulation. Consider an $(n+3)$-gon with vertices labeled $1, \dots, n+3$ and a fixed triangulation $T = \{ T_1, \dots, T_n, T_{n+1}, \dots, T_{2n+3} \}$ where $T_1, \dots, T_n$ are interior diagonals and $T_{n+1},\dots, T_{2n+3}$ are boundary edges. Let $i$ and $j$ be non-adjacent boundary vertices and let $M_{i,j}$ denote the interior diagonal connecting $i$ and $j$. Fix an orientation on $M_{i,j}$ and let $i = p_0, p_1, \dots, p_{d}, p_{d+1} = j$ be the ordered list of intersection points of $M_{i,j}$ and arcs of $T$. Then let $i_1, \dots, i_{d}$ be a list of indices such that intersection point $p_k$ lies on the arc $T_{i_k} \in T$.
For $k \in [d]$, let $M_k$ denote the segment of the diagonal $M_{i,j}$ between the intersection points $p_k$ and $p_{k+1}$.

In \cite{MS-09}, Musiker and Schiffler define a \emph{complete $T$-path from $i$ to $j$} as a sequence $\alpha = ( t_1,\dots,t_{\ell(\alpha)})$ such that
\begin{itemize}\itemsep=0pt
 \item[(T1)] $i = a_0,a_1, \dots,a_{\ell(\alpha)} = j$ are vertices of the $(n+3)$-gon,
 \item[(T2)] $t_k \in \alpha$ is an arc in the triangulation $T$ that connects vertices $a_{k-1}$ and $a_k$, and
 \item[(T3)] the even arcs are precisely the arcs crossed by $M_{i,j}$ in order, i.e., $t_{2k} = T_{i_k}$.
\end{itemize}
One immediate consequence of (T3) is that
\new{$\ell(\alpha)=2d+1$}.
It is possible for a complete $T$-path to contain backtracking, so there is consequently a natural notion of a \emph{reduced} $T$-path where such backtracking is removed. When we define hyper $T$-paths in Section~\ref{sec:gen-T-paths}, we will see that there are also similar notions of complete and reduced hyper $T$-paths.

\begin{Example}\label{ex:cluster_t_path}
Consider the triangulation $T$ and arc $M_{i,j}$ shown below:
\begin{center}
\begin{tikzpicture}[scale=1.5]
 \draw[thick] (0,0) to node[scale=0.85,left,xshift=-2pt]{$T_{13}$} (0.75,0.75);
 \draw[thick] (0.75,0.75) to node[scale=0.85,above,yshift=-1pt]{$T_6$} (1.75,0.75) to node[scale=0.85,right,xshift=2pt]{$T_{7}$} (2.5,0) to node[scale=0.85,right]{$T_8$} (2.5,-1);
 \draw[thick] (2.5,-1) to node[scale=0.85,right,xshift=3pt]{$T_9$} (1.75,-1.75);
 \draw[thick] (1.75,-1.75) to node[scale=0.85,below]{$T_{10}$} (0.75,-1.75) to node[scale=0.85,left,xshift=-1pt]{$T_{11}$} (0,-1) to node[scale=0.85,left]{$T_{12}$}(0,0);

 \draw[thick] (0,0) to node[scale=0.85,below]{$T_1$} (1.75,0.75);
 \draw[thick] (1.75,0.75) to node[right,scale=0.85]{$T_4$} (2.5,-1);
 \draw[thick] (2.5,-1) to node[left,scale=0.85,yshift=5pt]{$T_5$} (0.75,-1.75);
 \draw[thick] (0.75,-1.75) to node[left,scale=0.85]{$T_2$} (0,0);
 \draw[thick] (0.75,-1.75) to node[left,scale=0.85]{$T_3$} (1.75,0.75);

 \draw[ultra thick,dashed,out=-15,in=60,->-] (0.75,0.75) to node[left,scale=0.85,yshift=-8pt,xshift=4pt]{$M_{i,j}$} (1.75,-1.75);
\end{tikzpicture}
\end{center}
Notice that the arcs crossed by $M_{i,j}$ are, in order, $T_1$, $T_3$, and $T_5$. The following is then a~complete $T$-path from $i$ to $j$. Odd arcs are depicted as blue and solid; even arcs are depicted as red and dashed:
\begin{center}
\begin{tikzpicture}[scale = 1.5]
\node at (-0.53,0) {\phantom{$a_0$}$i=$};
\node (0) at (0,0) {$a_0$};
\node (1) at (1,0) {$a_1$};
\node (2) at (2,0) {$a_2$};
\node (3) at (3,0) {$a_3$};
\node (4) at (4,0) {$a_4$};
\node (5) at (5,0) {$a_5$};
\node (6) at (6,0) {$a_6$};
\node (7) at (7,0) {$a_7$};
\node at (7.4,0) {$=j$.};
\draw[blue] (0) to node[above] {$T_{13}$} (1);
\draw[blue] (2) to node[above] {$T_{3}$} (3);
\draw[blue] (4) to node[above] {$T_{3}$} (5);
\draw[blue] (6) to node[above] {$T_{9}$} (7);
\draw[red,densely dashed] (1) to node[above] {$T_{1}$} (2);
\draw[red,densely dashed] (3) to node[above] {$T_{3}$} (4);
\draw[red,densely dashed] (5) to node[above] {$T_{5}$} (6);
\end{tikzpicture}
\end{center}
By removing the backtracking on the arc $T_3$, we obtain the following reduced $T$-path:
\begin{center}
\begin{tikzpicture}[scale = 1.5]
\node at (-0.53,0) {\phantom{$a_0$}$i=$};
\node (0) at (0,0) {$a_0$};
\node (1) at (1,0) {$a_1$};
\node (2) at (2,0) {$a_2$};
\node (3) at (3,0) {$a_3$};
\node (4) at (4,0) {$a_4$};
\node (5) at (5,0) {$a_5$};
\node at (5.4,0) {$=j$.};
\draw[blue] (0) to node[above] {$T_{13}$} (1);
\draw[red,densely dashed] (1) to node[above]{$T_{1}$} (2);
\draw[blue] (2) to node[above]{$T_{3}$} (3);
\draw[red, densely dashed] (3) to node[above]{$T_5$} (4);
\draw[blue] (4) to node[above]{$T_9$} (5);
\end{tikzpicture}
\end{center}
\end{Example}

We denote the set of all complete $T$-paths from $i$ to $j$ as $\mathcal{T}_{ij}$. Given a complete $T$-path $\alpha$, the \emph{weight} of $\alpha$ is defined to be the Laurent monomial
\[ \wt(\alpha) := \bigg( \prod_{i \textrm{ odd}} \wt(t_{i}) \bigg)\bigg( \prod_{i \textrm{ even}} \wt(t_i) \bigg)^{-1}, \]
where the weight of edge $t_i$ is given by $\wt(t_i) := x_{t_i}$. By summing over the set $\mathcal{T}_{ij}$, Schiffler \cite{S-10} then obtains an expansion formula for the cluster variable corresponding to $M_{i,j}$ in terms of the cluster seed corresponding to the triangulation $T$:
\[ x_{M_{i,j}} := \sum_{\alpha \in \mathcal{T}_{ij}} \wt(\alpha). \]
Although it is not immediately obvious, this cluster expansion formula is independent of the choice of orientation on $M_{i,j}$. This formula also holds if the summation is over all reduced $T$-paths rather than all complete $T$-paths.

\begin{Example}
The $T$-path in Example~\ref{ex:cluster_t_path} has weight \[\frac{x_{T_{13}}x_{T_3}x_{T_3}x_{T_9}}{x_{T_1}x_{T_3}x_{T_5}}=\frac{x_{T_{13}}x_{T_3}x_{T_9}}{x_{T_1}x_{T_5}}.\] In the case where the variables corresponding to boundary edges (the frozen variables) are given weight 1, the weight of the $T$-path becomes $\frac{x_{T_3}}{x_{T_1}x_{T_5}}.$ In this case, the summation over $\mathcal{T}_{ij}$ yields the cluster expansion formula
\[ x_{M_{i,j}} = \frac{x_{T_3}^2 + x_{T_3}x_{T_4} + x_{T_2}x_{T_3} + x_{T_2}x_{T_4} + x_{T_1}x_{T_5}}{x_{T_1}x_{T_3}x_{T_5}}.\]
\end{Example}

\subsection{Construction and examples}\label{sec:gen-T-paths}

In this section, we generalize the notion of $T$-paths to define \emph{hyper $T$-paths}. We first need the following construction of $\Gamma_{\mathcal{C}}$, an auxiliary graph.

Let $\Gamma$ be a tree and $\mathcal{C}$ be a rooted cluster for $\Gamma$. For each vertex $x$ of degree 1 in $\Gamma_{\mathcal{C}}$, we add an additional vertex $x'$ which is adjacent only to $x$. Call this extended graph $\Gamma'$. We will continue to think of $\Gamma'$ as a poset where $x'<x$ if $x$ is not the root and $x'>x$ if $x$ is the root. For every $S\in\mathcal{C}$, let $S'$ be the set of vertices in $\Gamma'$ that are adjacent to a vertex in $S$ but are themselves not in $S$. Add a~hyperedge labelled $S$ which connects all the vertices of $S'$. As a~convenient abuse of notation, we often refer to this hyperedge simply as $S$. We refer to this new hypergraph as~$\Gamma_{\mathcal{C}}$. See Figure~\ref{fig:example_hypergraph} for an example:

\begin{figure}[h]
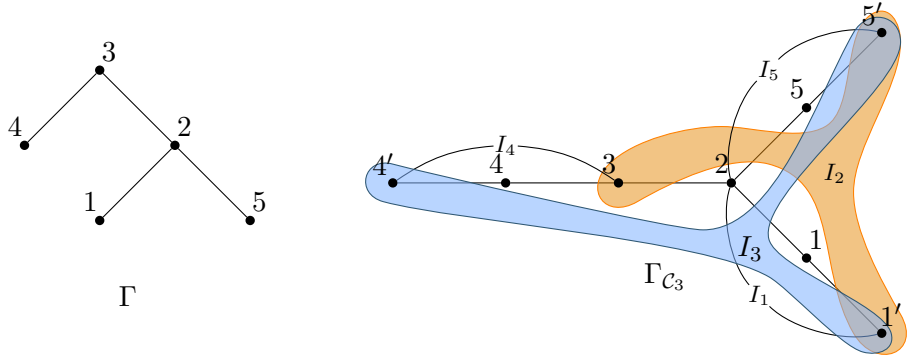

 \centering
\tikzfig{rooted_tree_example}
\quad\quad
\tikzfig{hypergraph_new}
\caption{A rooted tree and its associated hypergraph.}
\label{fig:example_hypergraph}
\end{figure}

For $x$ a vertex in the rooted tree $\Gamma$, we will use $\mathcal{L}_x$ to denote to all minimal elements of $\Gamma'$ that are less than $x$. Equivalently, $\mathcal{L}_x$ is the elements of $\Gamma'\setminus\Gamma$ that are less than $x$.

\begin{Definition}\label{def:hyperTpath}
Let $S$ be a connected subset of $\Gamma$. A \emph{complete hyper $T$-path} for $S$ with respect to $\mathcal{C}$ is a set of nodes, labelled by vertices of $\Gamma_{\mathcal{C}}$, joined by connections labelled by hyperedges of $\Gamma_{\mathcal{C}}$ such that the diagram is connected and the following hold.
\begin{enumerate}\itemsep=0pt
 \item If a connection is labelled by hyperedge $e$, it joins nodes labelled by all the endpoints of $e$ with multiplicity~1.
 \item There are a distinguished set of \emph{boundary nodes} labelled by elements of $S'$ with multiplicity~1. Other nodes are called \emph{internal nodes}.
 \item Connections are specified to be \emph{even} or \emph{odd}.
 \item Boundary nodes are adjacent only to odd connections.
 \item Internal nodes labelled by elements of $S$ are adjacent to exactly one even and at least one odd connection.
 \item Internal nodes labelled by elements not in $S$ are adjacent to exactly one even and exactly one odd connection.
 \item If $x$, $y$ are below elements of $S$, any path in any complete hyper $T$-path from boundary node $x$ to boundary node $y$ uses even connections labelled, in order, by $I_x,I_{a_p},I_{a_{p-1}},\dots,I_{a_1},I_{b_1},\allowbreak I_{b_2},\dots,I_{b_q},I_y$ where the shortest path from $x$ to $y$ in $\Gamma'$ is $x,a_p,$ $a_{p-1},\dots,a_1,x\vee y,b_1,b_2,\dots,\allowbreak b_q,y$ for $p,q \geq 0$.
 \item If $x$ is below an element of $S$ and $y$ above the maximal element of $S$, any path in any complete hyper $T$-path from the boundary node $x$ to the boundary node $y$ uses even connections labelled, in order, by $I_x,I_{a_p},\ldots,I_{a_2}$, where the shortest path from $x$ to $y$ in $\Gamma'$ is $x,a_p,a_{p-1},\ldots,a_1,y$, $p \geq 1$. If $p = 1$, then a path from $x$ to $y$ uses the even connec\-tion~$I_x$.\looseness=1

 \item If $x$, $y$ are boundary nodes, where the shortest path from $x$ to $y$ in $\Gamma'$ is $x,a_p,\ldots,a_1,x\vee y$, $b_1, \ldots,b_q,y$, then any path in any complete hyper $T$-path from $x$ to $y$ uses nodes labelled by elements of $\mathcal{L}_{x\vee y}$ and $a_p,$ $a_{p-1},\dots,a_1,x\vee y,b_1,b_2,\dots,b_q$, with any multiplicity. If one of the nodes, say $y$, is adjacent to the maximal element of $S$, then $x \vee y = y$ and $q = 0$.
\end{enumerate}
\end{Definition}

When we draw complete hyper $T$-paths, we will always depict odd edges as blue/solid and even edges as red/dashed.

\begin{Example}\label{ex:hyper_T-path}
Consider the graph $\Gamma$ from Figure~\ref{fig:example_hypergraph} and cluster $\mathcal{C}_3$. Let $S$ be the set $\{2,3\}$.
The following are some examples of complete hyper $T$-paths for $S$:
\begin{center}
\begin{tabular}{@{}cc@{}}
 \begin{tabular}{@{}c@{}}\tikzfig{Tpath-1-c}\end{tabular} & \begin{tabular}{@{}c@{}}\tikzfig{Tpath-2-c}\end{tabular} \\
 &\\[-0.3em]
 \begin{tabular}{@{}c@{}}\tikzfig{Tpath-3-c}\end{tabular} & \begin{tabular}{@{}c@{}}\tikzfig{Tpath-5-c}\end{tabular}
\end{tabular}
\end{center}
\end{Example}

\begin{Example}
The following is very similar to the first complete hyper $T$-path from the previous example:
\begin{center}
\tikzfig{Tpath-1-c-2}
\end{center}
However, notice that the path from 1 to 4 uses the even edge $I_5$, which breaks Rule (7). Thus, this is not a complete hyper $T$-path.
\end{Example}

\begin{Example}
Consider the path graph on $[4]$ and cluster $\mathcal{C}_1$. Let $S = \{ 2, 3\}$. The following might initially appear to be a complete hyper $T$-path for $S$, but it actually violates Rule~(8):
\begin{center}
 \begin{tikzpicture}
 \node (1) at (0,0) {$4$};
 \node (3) at (1,0.5) {$3$};
 \node (2) at (1,-0.5) {$4'$};
 \node (4) at (2,0) {$2$};
 \node (5) at (3,0) {$4'$};
 \node (6) at (4,0) {$1$};

 \draw[blue] (1) to (3);
 \draw[blue] (1) to node[midway,below,style=label,yshift=-2]{$\color{blue}{I_4}$} (2);
 \draw[red,densely dashed] (3) to (4);
 \draw[red, densely dashed] (2) to node[midway,below,style=label,yshift=-2,xshift=1]{$\color{red}{I_3}$} (4);
 \draw[blue] (4) to node[midway,above,style=label,yshift=2]{$\color{blue}{I_3}$} (5);
 \draw[red,densely dashed] (5) to node[above,midway,style=label,yshift=2] {$\color{red}{I_2}$}(6);
 \end{tikzpicture}
\end{center}
Because $1$ is adjacent to the maximal element of $S$ and $4$ is adjacent to the minimal element of~$S$, Rule~(8) requires that the even edges of any complete hyper $T$-path from 1 to 4 be labeled, in order, by \new{$I_{2}$} and $I_{3}$.
\end{Example}

\begin{Example}
Our definition of complete hyper $T$-paths is motivated by the definition of complete $T$-paths in the last section. If $\Gamma$ is a path graph on $[n]$, then the clusters consisting of only $Y$-variables form a type $A_{n-1}$ cluster algebra \cite[Corollary 6.2]{LP-16}. In this case, complete hyper $T$-paths are exactly complete $T$-paths.

Let $\Gamma$ be the path graph on $[6]$ with vertices numbered in order. We can construct $\Gamma_{\mathcal{C}_3}$ as follows (with edge labels omitted for clarity):
\begin{center}
\begin{tikzpicture}
\node at (0,-0.3) {$1'$};
\node at (1,-0.3) {1};
\node at (2,-0.3) {2};
\node at (3,-0.3) {3};
\node at (4,-0.3) {4};
\node at (5,-0.3) {5};
\node at (6,-0.3) {6};
\node at (7,-0.3) {$6'$};
\draw [line width=0.25mm, fill=black] (0,0) circle (0.75mm);
\draw [line width=0.25mm, fill=black] (1,0) circle (0.75mm);
\draw [line width=0.25mm, fill=black] (2,0) circle (0.75mm);
\draw [line width=0.25mm, fill=black] (3,0) circle (0.75mm);
\draw [line width=0.25mm, fill=black] (4,0) circle (0.75mm);
\draw [line width=0.25mm, fill=black] (5,0) circle (0.75mm);
\draw [line width=0.25mm, fill=black] (6,0) circle (0.75mm);
\draw [line width=0.25mm, fill=black] (7,0) circle (0.75mm);
\draw (0,0) -- (7,0)
(0,0) edge[bend left=30] (2,0)
(0,0) edge[bend left=30] (3,0)
(7,0) edge[bend right=30] (5,0)
(7,0) edge[bend right=30] (4,0)
(7,0) edge[bend right=30] (3,0)
(7,0) edge[bend right=30] (0,0);
\end{tikzpicture}
\end{center}
This can be redrawn as an octogon:
\begin{center}
\begin{tikzpicture}[scale=1.5]
 \draw[thick] (0,0) to (0.75,0.75);
 \draw[thick] (0.75,0.75) to (1.75,0.75) to (2.5,0) to (2.5,-1);
 \draw[thick] (2.5,-1) to (1.75,-1.75);
 \draw[thick] (1.75,-1.75) to (0.75,-1.75) to (0,-1) to (0,0);

 \draw[thick] (0.75,0.75) to (0,-1);
 \draw[thick] (0.75,0.75) to (0.75,-1.75);
 \draw[thick] (1.75,0.75) to (2.5,-1);
 \draw[thick] (1.75,0.75) to (1.75,-1.75);
 \draw[thick] (1.75,0.75) to (0.75,-1.75);

 \node[above] at (0.75,0.75) {$1'$};
 \node[left] at (0,0) {1};
 \node[left] at (0,-1) {2};
 \node[below] at (0.75,-1.75) {3};
 \node[below] at (1.75,-1.75) {4};
 \node[right] at (2.5,-1) {5};
 \node[right] at (2.5,0) {6};
 \node[above] at (1.75,0.75) {$6'$};
\end{tikzpicture}
\end{center}
Complete hyper $T$-paths on this octagon are the same as complete $T$-paths.
\end{Example}

\begin{Remark}
 Rules (7) and (8) can be relaxed to instead require even connections to be labelled by a subset of the listed hyperedges, arranged in the order consistent with the complete list. Doing so gives us a definition for the more general object of (not necessarily complete) \emph{hyper $T$-paths}. In particular, we can define a \emph{reduced hyper $T$-path} as a hyper $T$-path where there are no internal nodes of degree two where both adjacent connections are labelled by the same hyperedge. Although we state and prove Theorem~\ref{thm:main2} with complete hyper $T$-paths, it also holds for reduced hyper $T$-paths.
\end{Remark}

For the remainder of this paper, we focus solely on complete hyper $T$-paths. For readability, we will drop the prefix ``complete'' and simply refer to complete hyper $T$-paths as hyper $T$-paths.

\begin{Definition}
The \emph{weight} of a hyper $T$-path $\alpha$ is \[ \wt(\alpha)=\bigg(\prod_{\text{odd connections }c} \wt(c)\bigg)\bigg(\prod_{\text{even connections }c} \wt(c)\bigg)^{-1},\]
where the weight of a connection labelled by a set $I_x$ is $Y_{I_x}$ and the weight of a connection labelled by an edge in $\Gamma'$ is~1.
\end{Definition}

\begin{Example}
The first hyper $T$-path in Example~\ref{ex:hyper_T-path} has weight \[\frac{Y_{I_5}Y_{I_2}}{Y_{I_1}Y_{I_4}Y_{I_5}Y_{I_2}}=\frac{1}{Y_{I_1}Y_{I_4}}=\frac{1}{Y_1Y_4}.\]
\end{Example}

We have now introduced all of the necessary definitions to precisely understand the statement of Theorem~\ref{thm:main2}. The rest of Section~\ref{sec:T-path} will be dedicated to proving this theorem.

\subsection[Properties of hyper T-paths]{Properties of hyper $\boldsymbol{T}$-paths}\label{sec:T-path-lems}
Before proving Theorem~\ref{thm:main2}, we highlight some properties of hyper $T$-paths which are implied by the rules stated in Definition~\ref{def:hyperTpath} and will be useful to our proof.

The following condition is immediately implied by Rule (9).

\begin{Lemma}\label{lem:internalnodes}
Let $S$ be a connected subset of $\Gamma$. Then, internal nodes in a hyper $T$-path for $S$ will always be labeled either by elements of $S$ or elements of $\mathcal{L}_y$ for some $y \in S$.
\end{Lemma}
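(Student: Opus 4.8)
The plan is to deduce this lemma directly from Rule~(9) of Definition~\ref{def:hyperTpath}, which constrains the labels appearing on any path between two boundary nodes of a hyper $T$-path. First I would argue that it suffices to understand, for each internal node $z$, some path through $z$ that connects two boundary nodes; since a hyper $T$-path is by definition connected and every node is adjacent to at least one connection, and since the boundary nodes are precisely the nodes labeled by elements of $S'$ (Rule~(2)), one can walk outward from $z$ in both directions and, using Rules (4)--(6) on the in/out degrees of internal nodes (every internal node is adjacent to at least one odd and exactly one even connection, so it is never a dead end), eventually reach boundary nodes on either side. Thus every internal node $z$ lies on some path between two boundary nodes $x$ and $y$.

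Next I would invoke Rule~(9): for boundary nodes $x,y$ with shortest $\Gamma'$-path $x, a_p, \dots, a_1, x\vee y, b_1, \dots, b_q, y$, any path in the hyper $T$-path from $x$ to $y$ uses only nodes labeled by elements of $\mathcal{L}_{x\vee y}$ together with $a_p, \dots, a_1, x\vee y, b_1, \dots, b_q$ (with the degenerate adjustment $x\vee y = y$, $q=0$ when $y$ is adjacent to the maximal element of $S$). The vertices $a_1, \dots, a_p, b_1, \dots, b_q, x\vee y$ all lie on a geodesic in $\Gamma'$ between two elements of $S'$, hence — since $S$ is connected — they all lie in $\bar S := S \cup \{x\vee y\}$; more carefully, $x\vee y$ is an element of $\Gamma'$ that is $\geq$ some element of $S$ and the intermediate $a_i, b_j$ between it and the boundary nodes adjacent to $S$ are exactly the elements of $S$ on these two root-ward chains. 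So these labels are elements of $S$. The remaining possibility, $\mathcal{L}_{x\vee y}$, consists of minimal elements of $\Gamma'$ below $x\vee y$; since $x\vee y$ is $\geq$ some vertex $w\in S$, we have $\mathcal{L}_{x\vee y} \subseteq \mathcal{L}_w$ for that $w\in S$. Combining, the label of $z$ is either an element of $S$ or an element of $\mathcal{L}_y$ for some $y\in S$, which is exactly the claim.

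The one point requiring a little care — and the main (minor) obstacle — is making precise the claim that the intermediate vertices $a_i, b_j$ on the geodesic in $\Gamma'$ between two elements of $S'$ are elements of $S$, and that $x\vee y$ is $\geq$ some element of $S$. This follows from the construction of $\Gamma'$ and the fact that $S$ is a connected subtree: the set $S'$ consists of vertices of $\Gamma'$ adjacent to but outside $S$, so a shortest path between two such vertices enters $S$, travels within the subtree $S$ (possibly passing through its unique highest vertex, which plays the role of $x\vee y$ when the path crosses ``over the top''), and exits; every vertex strictly between the two endpoints lies in $S$, and the join $x\vee y$ in the poset $\Gamma'$ is either one of the $b_j$'s / $a_i$'s (hence in $S$) or the root-ward apex of this subtree walk (hence $\geq$ an element of $S$). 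One must also handle the degenerate cases flagged in Rule~(9) (where a boundary node is adjacent to the maximal element of $S$, forcing $q=0$), but these only make the list of allowed labels shorter, so the conclusion is unaffected. I would keep this verification brief, as the substance is entirely contained in Rule~(9).
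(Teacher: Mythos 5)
Your proposal is correct and takes exactly the paper's route: the paper offers no proof beyond the single remark that the lemma ``is immediately implied by Rule~(9),'' and your write-up simply supplies the details of that implication (reducing to paths between boundary nodes and then reading off the allowed labels). Nothing further is needed.
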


We next prove a lemma regarding the even connections that appear in a hyper $T$-path. Note that in the statement of the lemma below, $x^+$ only exists if $x$ is not the root or if $x$ is the root and $\deg(x) = 1$.

\begin{Lemma}\label{lem:evenconnections}
Given $S$, a connected subset of $\Gamma$ endowed with rooted cluster $\mathcal{C}_v$ let $x$ be the maximal element of $S$ $($it is possible that $x = v)$. Let $\alpha$ be a hyper $T$-path for $S$. Then, for each $y \in S \cup S'$ where $y \neq x,x^+$, $\alpha$ contains exactly one even connection labeled $I_y$.
\end{Lemma}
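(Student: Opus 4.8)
The plan is to count, for a fixed $y \in (S \cup S') \setminus \{x, x^+\}$, the local parity constraints that Definition~\ref{def:hyperTpath} forces on connections labeled $I_y$, and then argue these can only be satisfied by exactly one even connection. First I would observe that a connection is labeled by the hyperedge $I_y$ precisely when it joins the nodes of $(I_y)'$, the ``boundary'' of $I_y$ in $\Gamma'$; by Rules (1)--(2), every such connection incident to a node touches it with multiplicity one, so the question reduces to a parity bookkeeping at the nodes of $(I_y)'$. The key structural fact is that $y \in (I_y)'$ is false but $y^+ \in (I_y)'$ (when $y \ne v$), and more importantly $y' \in (I_y)'$ for each leaf descendant, so the hyperedge $I_y$ always touches the node $y$ from ``above'' via $y^+$ --- I would want to pin down exactly which node of the diagram is the relevant ``pivot'' for $I_y$.

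Next I would use Rules (7) and (8) directly. Those rules dictate, for each pair of boundary nodes, the \emph{complete ordered list} of even-connection labels appearing on the unique connecting path; since a hyper $T$-path is connected (and since the boundary nodes are exactly the elements of $S'$ with multiplicity one by Rule (2)), every even connection labeled $I_y$ must appear on one of these prescribed paths. I would show that $I_y$ appears in the prescribed even-label list of a path from boundary node $p$ to boundary node $q$ exactly when $y$ lies on the $\Gamma'$-geodesic from $p$ to $q$ and $y$ is an element of $S$ or a minimal element below $S$ --- i.e.\ exactly when $y$ is an internal node of the diagram by Lemma~\ref{lem:internalnodes}. This gives that $I_y$ appears \emph{at least once} as an even connection. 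For the ``at most once'' direction, I would combine Rules (5) and (6): an internal node labeled by $y$ (or by the relevant element of $\mathcal{L}_y$) is adjacent to \emph{exactly one} even connection, and every even connection labeled $I_y$ is incident to such a node (the ``downhill endpoint'' dictated by the ordering in Rules (7)/(8)), so two distinct even $I_y$-connections would force two even connections at a single internal node, contradicting Rule (5) or (6).

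The cases $y = x$ and $y = x^+$ are excluded precisely because the ordered even-label lists in Rules (7) and (8) \emph{truncate} right around the top of $S$: in Rule (8) the list stops at $I_{a_2}$ (never reaching $I_{x^+}$-type labels beyond the top), and in Rule (7) the join $x \vee y$ can equal $x$ so $I_x$ need not appear, meaning $I_x$ and $I_{x^+}$ may legitimately appear zero times or more than once depending on the configuration. I would make this explicit by walking through the endpoint conventions ``if $p=1$ then the path uses the even connection $I_x$'' in Rule (8) and ``$x \vee y = y$ and $q = 0$'' in Rules (8)/(9), which is exactly where the $x, x^+$ exceptions come from.

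\textbf{Main obstacle.} The delicate point is the ``at most one'' half: Rules (7) and (8) constrain the even labels \emph{along each boundary-to-boundary path} but do not a priori forbid the \emph{same} label $I_y$ from recurring on two different such paths through disjoint parts of the diagram. The real content is a global connectivity/acyclicity argument --- one must show that all the prescribed even-connection paths, when glued along shared internal nodes, force every even $I_y$-connection to pass through the \emph{unique} node that is the topmost vertex of $(I_y)'$ (equivalently, the node labeled $y^+$, or $y$ when $y \in \mathcal{L}$), so that the exactly-one-even-connection rules (5)/(6) at that node finish the job. Getting the bookkeeping of which node is ``pivotal'' for $I_y$ correct, uniformly across the cases $y \in S$, $y \in S'$, and $y \in \mathcal{L}_z$, is where the care is needed; I would handle it by induction on the structure of $S$ as a rooted subtree, peeling off the maximal element $x$ and invoking the inductive statement for the (connected) pieces of $S \setminus \{x\}$.
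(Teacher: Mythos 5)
Your existence argument is essentially the paper's: for $y\in S$ one picks boundary nodes $c<y$ and $d\not<y$ (so that $c\vee d>y$) and invokes Rules (7)/(8) to force an even connection labeled $I_y$ on the path between them; for $y\in S'$ any path out of $y$ starts with the even connection $I_y$. One caution: your characterization ``$I_y$ appears on the path from $p$ to $q$ exactly when $y$ lies on the $\Gamma'$-geodesic'' is not quite right, because Rule (7) omits the label $I_{p\vee q}$ at the join. This is exactly why the paper chooses $d\nless y$, guaranteeing $y$ is not the join of the two chosen boundary nodes; without that choice the existence claim can fail for a particular pair.

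The genuine gap is in the ``at most one'' half. Your primary mechanism --- Rules (5)/(6) at a pivotal node --- does not work as stated: those rules constrain each \emph{node}, not each \emph{label}, and two even connections labeled $I_y$ would simply be incident to two \emph{distinct} nodes each labeled $y^+$ (the diagram is allowed to repeat labels on internal nodes). You correctly identify this obstacle yourself, but your proposed repair (show all even $I_y$-connections pass through ``the unique node'' labeled $y^+$, by induction on $S$) is circular --- the uniqueness of that node is precisely what is in question --- and is not carried out. The paper's actual argument is much more direct and bypasses node-counting entirely: since the diagram is connected, two even connections with the same label $I_y$ would yield two boundary nodes joined by a path containing two even connections labeled $I_y$, and this immediately contradicts Rules (7)/(8), which prescribe the complete ordered list of even labels on any boundary-to-boundary path with each label occurring once. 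You should replace your Rule (5)/(6) argument with this connectivity-plus-(7)/(8) contradiction.
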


\begin{proof}
First, we show that for every $y \in S \cup S'$ such that $y \neq x,x^+$, the set $I_y$ must label at least one even connection. First, suppose $y \in S$; then, at least one vertex $c$ such that $c<y$ must be in $S'$. Moreover, $S'$ will have at least one element, $d$, such that $d \nless y$. It follows that $c \vee d > y$. By Rules (7) and (8), a path from $c$ to $d$ in a hyper $T$-path for $S$ must use $I_y$ as an even edge.
If $y \in S'$, then a path from $y$ to any other boundary node in a hyper $T$-path must use $I_y$ as an even edge.

Now, we show that $I_y$ cannot be used to label multiple even connections in a hyper $T$-path for $S$. Suppose by way of contradiction that the label $I_y$ appeared on two even connections in a hyper $T$-path. Because all hyper $T$-paths are connected, this would require that there be two boundary nodes connected by a path containing two even connections labeled $I_y$, violating Rules (7) and (8). Thus, we can conclude that any hyper $T$-path associated to $S$ must contain exactly one even connection labeled $I_y$.
\end{proof}

It follows from Rules (7) and (8) that Lemma \ref{lem:evenconnections} describes all even connections in a hyper $T$-path.
Finally we introduce a technical lemma which we will use heavily in the proofs for the rest of this section.

\begin{Lemma}\label{lem:NoExtendedVertexEvenStep}
Let $S$ be a connected subset of vertices of $\Gamma$ and let $u$ and $v$ be distinct elements of $S'$. Let $x$ be maximal in $S$ and let $a', b' \in \mathcal{L}_x$. Then, there does not exist a path between boundary nodes labeled $u$ and $v$ which takes a step along an even connection between $a'$ and $b'$ in any hyper $T$-path $\alpha$ associated to $S$.
\end{Lemma}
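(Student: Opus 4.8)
The plan is to argue by contradiction. Suppose a hyper $T$-path $\alpha$ for $S$ had a path from the boundary node labeled $u$ to the boundary node labeled $v$ that, at some step, crossed an even connection $e$ between the node labeled $a'$ and the node labeled $b'$. I would first observe that neither end of $e$ can be a boundary node, since boundary nodes meet only odd connections (Rule~(4)); in particular neither end equals $u$ or $v$, and the nodes of the path labeled $a',b'$ are internal nodes. Since $e$ joins two distinct pendant vertices of $\Gamma'$ it cannot be a $\Gamma'$-edge, so by Lemma~\ref{lem:evenconnections} it carries a label $I_y$ with $y$ a vertex of $\Gamma$. Writing $a,b$ for the distinct leaves of $\Gamma$ lying just above $a',b'$, the fact that $a',b'$ are endpoints of $I_y$ forces $a,b\in I_y=\Gamma_{\leq y}^v$, i.e.\ $a\vee b\leq y$; and $a',b'\in\mathcal{L}_x$ forces $a\vee b\leq x$. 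Thus $x$ and $y$ have a common lower bound in the tree, hence are comparable, and by Lemma~\ref{lem:evenconnections} we also know $y\neq x,x^+$.

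Next I would rule out $y>x$. Every element of $S'$ either equals $x^+$ or is $\leq x$ in the poset $\Gamma'$ (it is $x^+$, a child of a vertex of $S$, or the pendant of a leaf of $S$). If neither $u$ nor $v$ is $x^+$, then $u\vee v\leq x$, and Rule~(7) places $y$ among the vertices of the $\Gamma'$-geodesic from $u$ to $v$, all of which are $\leq x$; so $y\leq x$, contradicting $y>x$. If one of $u,v$ equals $x^+$, the geodesic from the other to $x^+$ passes through $x$, and Rule~(8) lists the even connections of the path as $I_w$ with every such $w$ strictly below $x$; again $y>x$ is impossible. So $y<x$.

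With $y<x$ I would then locate $y$. Since the nodes labeled $a',b'$ are internal pendants of $\Gamma'$ and Rule~(9) permits interior nodes of the $u$–$v$ path only among $\mathcal{L}_{u\vee v}$ and the interior geodesic vertices (none of which is a pendant), we get $a',b'\in\mathcal{L}_{u\vee v}$. Hence $y$ is itself an interior geodesic vertex strictly below $u\vee v$; say $y=a_i$ on the branch $u\lessdot a_p\lessdot\cdots\lessdot a_1\lessdot u\vee v$, so $a,b\leq a_i$. By Rule~(7) the even connections of the path occur in the order $I_u,I_{a_p},\dots,I_{a_1},I_{b_1},\dots,I_{b_q},I_v$, so $I_{a_i}$ is used exactly once — at the step from $a'$ to $b'$ — and after that step the path uses only the even connections $I_{a_{i-1}},\dots,I_v$ before terminating at $v\notin\Gamma_{\leq a_i}^v$.

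The hard part will be the final step: deriving a contradiction from this configuration. The only endpoint of $I_{a_i}$ outside $\Gamma_{\leq a_i}^v$ is $(a_i)^+=a_{i-1}$, so crossing $I_{a_i}$ between the two pendants $a',b'\in\Gamma_{\leq a_i}^v$ keeps the path inside $\Gamma_{\leq a_i}^v$, whereas the tail of the path must reach $v$ outside $\Gamma_{\leq a_i}^v$ using only the even connections $I_{a_{i-1}},\dots,I_v$ and only the nodes allowed by Rule~(9). The obstacle is that $\mathcal{L}_{u\vee v}$ contains pendants lying below $a_i$, so Rule~(9) alone does not keep the tail out of $\Gamma_{\leq a_i}^v$; the argument must instead combine the geodesic ordering of the even connections in Rule~(7) (which forces the walk to move ``upward through'' each $a_j$ precisely when it crosses $I_{a_j}$), the degree-two condition at each internal pendant node (Rule~(6)), and the uniqueness of each even label (Lemma~\ref{lem:evenconnections}) to show that the first time the path escapes $\Gamma_{\leq a_i}^v$ it must cross $I_{a_i}$ onto the node $a_{i-1}$ — contradicting that $I_{a_i}$ is used between $a'$ and $b'$. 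Carrying out this bookkeeping carefully is the crux of the proof.
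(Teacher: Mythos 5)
Your proposal sets up the contradiction correctly and makes some sensible preliminary reductions (the endpoints of the even connection are internal nodes, the label is some $I_y$ with $y$ comparable to $x$ and in fact $y$ lies on the $u$--$v$ geodesic, and $a',b'\in\mathcal{L}_{u\vee v}$ by Rule~(9)). However, you stop exactly where the lemma actually gets proved: your final paragraph explicitly defers ``carrying out this bookkeeping'' of showing that the tail of the path cannot reach $v$ after the offending even step. That bookkeeping \emph{is} the content of the lemma, so as written the proposal has a genuine gap rather than a complete argument.

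For comparison, the paper closes this gap with an upward propagation argument rather than an ``escape from $\Gamma_{\leq a_i}^v$'' analysis. Writing the geodesic as $u=w_0,w_1,\dots,w_n,u\vee v,z_m,\dots,z_0=v$, one first shows that an even step between $a'$ and $b'$ along $I_{w_k}$ with $k<n$ forces (via the unique odd connection at $a'$, whose possible labels are pinned down by Rules~(6) and~(7) and acyclicity of $\Gamma$) the existence of \emph{another} pair of nodes labeled $a'$, $b'$ joined by an even connection labeled $I_{w_{k+1}}$. Iterating, it suffices to treat the even step along $I_{w_n}$, and there the same analysis forces a new node $b'$ that must be incident to an even connection $I_{u\vee v}$, then one labeled by ever larger sets, until one reaches the maximal element of $S$, where no admissible even connection remains (by Lemma~4.8 the label $I_{x^+}$ never occurs) --- a contradiction. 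A separate but parallel argument handles the case $v=x^+$, using Rule~(8) in place of Rule~(7). If you want to complete your version, you would need to supply an argument of comparable strength; in particular, your observation that ``the only endpoint of $I_{a_i}$ outside $\Gamma_{\leq a_i}^v$ is $a_{i-1}$'' is not by itself enough, because (as you note) Rule~(9) allows the tail of the path to revisit pendants below $a_i$, and ruling that out requires precisely the odd-connection case analysis that the paper's propagation step performs.
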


\begin{proof}
We assume for sake of contradiction that such a path exists in some $T$-path $\alpha$. First, suppose that $u$ and $v$ are adjacent to minimal elements of~$S$. Let the path between~$u$ and~$v$ in~$\Gamma$ be $u = w_0, w_1,\ldots,w_n,u \vee v, z_m,\ldots,z_1,v = z_0$. In order for a path from a node labeled $u$ to a~node labeled $v$ in a hyper $T$-path associated to~$S$ to have an even connection with nodes~$a'$,~$b'$, it must be that $a' < w_i$ and $b' < w_i$ or $a' < z_i$ and $b' < z_i$ for some $i$. Assume without of loss of generality that the former is true. Note that, if $0 \leq i \leq n$ is minimal such that $a' < w_i$ and $b' < w_i$, then $a',b' \in \mathcal{L}_{w_j}$ for all $i \leq j \leq n$.

Assume first that this even step is along a connection labeled $I_{w_k}$ for $k < n$. Also assume without loss of generality that this path, oriented from the node $u$ to the node $v$, goes through~$b'$ before it goes through~$a'$. Then, we know that $a'$ must connect to some endpoints of the next even connection on this path. By Rule~(7), this next connection is labeled $I_{w_{k+1}}$. It is possible that~$a'$ also connects to other even connections. By Rule~(7) and the fact that this connection is along a path using even connections labeled $I_u, I_{w_1},\ldots,I_{w_k}$, if the unique odd connection incident to~$a'$ connects to another even connection, it must be~$I_z$ for $z \lessdot w_{k+1}$. Thus, the odd connection incident to $a'$ can only have nodes $w_{k+1}$, $w_{k+2}$, and boundary nodes in $\mathcal{L}_{w_{k+2}}$; this implies that this odd connection must be labeled $I_{w_k}$ or $I_{w_{k+1}}$. Since $b' \in \mathcal{L}_{w_k} \subseteq \mathcal{L}_{w_{k+1}}$, this odd connection incident to~$a'$ will also have~$b'$ as a node. We know this is a distinct node from the~$b'$ incident to the even connection~$I_{w_k}$ since this would create a cycle, which would create paths between the boundary nodes $u$ and $v$ which violate Rule~(7). Since the underlying graph~$\Gamma$ is acyclic, $b' \notin \mathcal{L}_{z}$. It follows that the even edge incident to this node labeled~$b'$ must be labeled~$I_{w_{k+1}}$. Thus, if we have a path with even step from~$b'$ to $a'$ along $I_{w_k}$, we will also have a path with an even step from~$b'$ to~$a'$ along $I_{w_{k+1}}$:

\begin{center}
\begin{tikzpicture}[scale = 1.5]
\node[] at (0,2) {$b'$};
\draw[red,densely dashed] (0.2,2) to (1.2,2);
\node[above] at (0.7,2) {$I_{w_k}$};
\node[] at (1.4,2){$a'$};
\draw[blue] (1.6,2) to (2.3,2);
\draw[blue] (2.3,2) to (2.6,2.2);
\draw[blue] (2.3,2) to (2.6,2);
\draw[blue] (2.3,2) to (2.6,1.8);
\node[] at (2.8,2.2){$b'$};
\draw[red,densely dashed] (3,2.2) to (4,2.2);
\node[] at (4.2,2.2){$a'$};
\node[above] at (3.5,2.2){$I_{w_{k+1}}$};
\end{tikzpicture}
\end{center}

It therefore suffices to consider the case when we have an even step along $I_{w_n}$ between~$b'$ and~$a'$. As before, we assume that when traveling along this path from $u$ to $v$, we visit $b'$ first. The odd connection incident to this node $a'$ can only have nodes which are incident to~$(I_{u \vee v})'$ or~$(I_z)'$ for $z \lessdot u \vee v$. Thus, the odd connection incident to $a'$ must be labeled $I_{(u \vee v)}$ or $I_{w_n}$. These odd connections will have $b'$ as a node as well. Note that $b' \in (I_{u \vee v})'$ but $b' \notin (I_z)'$ for $z \lessdot u \vee v$ and $z \neq w_n$ since our underlying graph is acyclic. Thus, either $u\vee v$ is maximal in $S$ and we have a contradiction since $b'$ cannot connect to any even connection, or $b'$ is incident to the even connection $I_{u \vee v}$:
\begin{center}
\begin{tikzpicture}[scale = 1.5]
\node[] at (0,2) {$b'$};
\draw[red,densely dashed] (0.2,2) to (1.2,2);
\node[above] at (0.7,2) {$I_{w_n}$};
\node[] at (1.4,2){$a'$};
\draw[blue] (1.6,2) to (2.3,2);
\draw[blue] (2.3,2) to (2.6,2.2);
\draw[blue] (2.3,2) to (2.6,2);
\draw[blue] (2.3,2) to (2.6,1.8);
\node[] at (2.8,2.2){$b'$};
\draw[red,densely dashed] (3,2.2) to (4,2.2);
\node[] at (4.2,2.2){$a'$};
\node[above] at (3.5,2.2){$I_{u \vee v}$};
\node[] at (2.8,1.8){$*$};
\draw[red,densely dashed] (3,1.8) to (4,1.8);
\node[below] at (3.5,1.8){$I_{z_m}$};
\end{tikzpicture}
\end{center}

This logic continues; each new node $a'$ must have an incident odd connection which also has a~new node labeled $b'$. In order to preserve all rules, the node $b'$ must be incident to an even edge~$I_x$ for increasing vertices~$x$. Eventually, we will reach $x$ such that $x^+$ is maximal in~$S$. Since~$I_{x^+}$ does not appear as an even connection in a hyper $T$-path associated with $S$, we would be stuck at this point with a node~$b'$ incident to an odd connection but without any valid options for an incident even connection. Therefore, it is impossible to have a path in a hyper $T$-path between two boundary nodes which uses a step along an even connection between two extended vertices.

Now, we consider the case where $u \in S'$ is adjacent to a minimal element of $S$ and $x^+ = v \in S$ where $x$ is the maximal element of $S$. Let the path between $u$ and $v$ in $\Gamma$ be $u = w_0, w_1,\ldots, w_n, x, v = x^+$. In order for a path from a node labeled $u$ to a node labeled $v$ in a hyper $T$-path associated to $S$ to have an even connection with labels $a'$, $b'$, it must be that $a' < w_i$ and $b' < w_i$ for some $i$ so that $a',b' \in \mathcal{L}_{w_i}$. By the same argument as the previous case, if a path from the boundary node labeled $u$ to the boundary node labeled $v$ has a step along an even connection~$I_{w_k}$, $k < n$, between~$a'$ and~$b'$, then we can also find a path which has a step along an even connection $I_{w_{k+1}}$. Thus, it suffices to consider the case where the path uses a~step along an even connection labeled $I_{w_n}$ between $b'$ and $a'$; as before, suppose this path, when oriented from~$u$ to~$v$ passes through $b'$ before $a'$. By Rule (8), $I_{w_n}$ is the last even connection on this path when oriented from~$u$ to~$v$, so the node incident to \new{$I_{w_n}$} labeled $a'$ must be incident to an odd connection with a node labeled $v$. This connection must be $I_x$; accordingly, another node labeled~$b'$ will be incident to this odd connection. In order to preserve Rule (7) concerning paths from $u$ to other boundary nodes labeled by vertices of $S'$ incident to minimal elements of~$S$, this new node $b'$ must be adjacent to an even connection labeled $I_z$ for $z \lessdot x$ and $z \neq w_{n}$. However, since our underlying graph $\Gamma$ is acyclic, there is no vertex~$z$ with these properties such that $b' \in \mathcal{L}_z$. Thus, we have reached a contradiction and such a step along an even connection is not possible in this case.
\end{proof}

\subsection[Hyper T-paths for singleton sets]{Hyper $\boldsymbol{T}$-paths for singleton sets}\label{sec:single-sets}

Let $\Gamma$ be a tree with rooted cluster $\mathcal{C}_{v}$. We first describe the set of hyper $T$-paths for a singleton set $\{x\}$, where $x$ is a vertex of $\Gamma$.
If $\{x\} \in \mathcal{C}_v$ (equivalently, if $I_{x} = \{x\}$) then the only hyper $T$-path for $\{x\}$ consists of a single odd connection, labeled by $I_{x}$. Now, consider a vertex $x$ of $\Gamma$ such that $\{x\} \notin \mathcal{C}_v$; this includes the case where $x$ is the root $v$.
Let $\Gamma_{\lessdot x}^v$ be $\{c_0,\ldots,c_d\}$ where $d \geq 0$. We know this set is non-empty since $x$ is not a minimal element of $\Gamma$. If $x$ is the root of $\Gamma$ and $\deg(x) > 1$, then any hyper $T$-path associated to the set $\{x\}$ has endpoints $c_0,\ldots,c_d$. Otherwise, any hyper $T$-path associated to the set $\{x\}$ has endpoints $c_0,\ldots,c_d,x^+$.

Recall that $\mathcal{L}_{c_i}$ denotes the set of minimal elements of $\Gamma'$ which are less than $c_i$ (equivalently, the set of elements of $\Gamma' \setminus \Gamma$ that are less than $c_i$). For each $c_i$, there is one hyper $T$-path where $c_i$ is connected via a collection of odd connections to $\mathcal{L}_{c_i}$ and all other $c_j$, for $j \neq i$, are connected to $x$. We illustrate this below for $c_0$. Odd connections are shown as solid (blue) lines and even connections as dashed (red) lines:
\begin{center}
\begin{tikzpicture}[scale = 1.5]
\node[left] at (0,2) {$c_0$};
\node[left] at (0,1) {$c_{1}$};
\node[left] at (0,0) {$\vdots$};
\node[left] at (0,-1) {$c_{d}$};
\draw[blue] (0,2) to (0.7,2);
\draw[blue] (0,2.05) to (1,2.2);
\draw[blue] (0.7,2) to (1,2.1);
\draw[blue] (0.7,2) to (1,1.9);
\draw[blue] (0,1.95) to (1,1.8);
\node[above] at (0.5,2.1) {$\cup_{w \in \Gamma_{\lessdot c_0}^v} I_{w}$};
\draw[blue] (0,1) to (1,1);
\draw[blue] (0,-1) to (1,-1);
\node[right] at (1,2) {$\mathcal{L}_{c_0}$};
\node[right] at (1,1) {$x$};
\node[right] at (1,0) {$\vdots$};
\node[right] at (1,-1) {$x$};
\draw[red,densely dashed] (1.5,2.2) to (1.7,2);
\draw[red,densely dashed] (1.5,2) to (1.7,2);
\draw[red,densely dashed] (1.5,1.8) to (1.7,2);
\draw[red,densely dashed] (1.7,2) to (2.4,2);
\node[above] at (1.9,2) {$I_{c_0}$};
\draw[red,densely dashed] (1.4,1) to (2.4,1);
\node[above] at (1.9,1) {$I_{c_{1}}$};
\draw[red,densely dashed] (1.4,-1) to (2.4,-1);
\node[above] at (1.9,-1) {$I_{c_{d}}$};
\node[right] at (2.4,2) {$x$};
\node[right] at (2.4,1) {$\mathcal{L}_{c_{1}}$};
\node[right] at (2.4,0) {$\vdots$};
\node[right] at (2.4,-1) {$\mathcal{L}_{c_{d}}$};
\draw[blue] (3.4,-1) to [out = 0, in = 0] (2.8,2.05);
\draw[blue] (3.1,-0.8) to (3.4,-1);
\draw[blue] (3.1,-1) to (3.4,-1);
\draw[blue] (3.1,-1.2) to (3.4,-1);
\draw[blue] (3.4,1) to [out = 0, in = 0] (2.8,2);
\draw[blue] (3.1,0.8) to (3.4,1);
\draw[blue] (3.1,1) to (3.4,1);
\draw[blue] (3.1,1.2) to (3.4,1);
\draw[blue] (2.8,2.1) to (3.8,2.1);
\node[right] at (3.8,2.15) {$x^+$};
\node[left] at (3.4,1.5){$I_{c_1}$};
\node[right] at (4,0.5) {$I_{c_d}$};
\end{tikzpicture}
\end{center}

Notice that for each element $w$ covered by $c_0$, there is one odd connection labeled $I_w$ with nodes $c_0$ and $\mathcal{L}_{w}$. The collection of all such connections will connect $c_0$ to all of the vertices of $\mathcal{L}_{c_0}$.
If $x^+$ does not exist, this hyper $T$-path can be updated by simply deleting that boundary node and its incident odd connection.
This hyper $T$-path satisfies the rules given in Definition~\ref{def:hyperTpath}. When $c_i$ is the distinguished boundary node, we refer to this hyper $T$-path as $T_{x}^{c_i}$.

There is one additional hyper $T$-path, shown below, where none of the $c_i$ boundary nodes connect to $\mathcal{L}_{c_i}$ via an odd connection:

\begin{center}
\begin{tikzpicture}[scale = 1.5]
\node[left] at (0,2) {$c_0$};
\node[left] at (0,1) {$c_{1}$};
\node[left] at (0,0) {$\vdots$};
\node[left] at (0,-1) {$c_{d}$};
\draw[blue] (0,2) to (1,2);
\draw[blue] (0,1) to (1,1);
\draw[blue] (0,-1) to (1,-1);
\node[right] at (1,2) {$x$};
\node[right] at (1,1) {$x$};
\node[right] at (1,-1) {$x$};
\draw[red,densely dashed] (1.4,2) to (2.1,2);
\draw[red,densely dashed] (2.1,2) to (2.4,2);
\draw[red,densely dashed] (2.1,2) to (2.4,2.2);
\draw[red,densely dashed] (2.1,2) to (2.4,1.8);
\node[above] at (1.9,2) {$I_{c_0}$};
\draw[red,densely dashed] (1.4,1) to (2.1,1);
\draw[red,densely dashed] (2.1,1) to (2.4,1);
\draw[red,densely dashed] (2.1,1) to (2.4,1.2);
\draw[red,densely dashed] (2.1,1) to (2.4,0.8);
\node[above] at (1.9,1) {$I_{c_{1}}$};
\draw[red,densely dashed] (1.4,-1) to (2.1,-1);
\draw[red,densely dashed] (2.1,-1) to (2.4,-1);
\draw[red,densely dashed] (2.1,-1) to (2.4,-0.8);
\draw[red,densely dashed] (2.1,-1) to (2.4,-1.2);
\node[above] at (1.9,-1) {$I_{c_{d}}$};
\node[right] at (2.4,2) {$\mathcal{L}_{c_0}$};
\node[right] at (2.4,1) {$\mathcal{L}_{c_{1}}$};
\node[right] at (2.4,-1) {$\mathcal{L}_{c_{d}}$};
\draw[blue] (3.3,-1) to (3.8,0.5);
\draw[blue] (3,-0.8) to (3.3,-1);
\draw[blue] (3,-1) to (3.3,-1);
\draw[blue] (3,-1.2) to (3.3,-1);
\draw[blue] (3.3,1) to (3.8,0.5);
\draw[blue] (3,0.8) to (3.3,1);
\draw[blue] (3,1) to (3.3,1);
\draw[blue] (3,1.2) to (3.3,1);
\draw[blue] (3,1.8) to (3.3,2);
\draw[blue] (3,2) to (3.3,2);
\draw[blue] (3,2.2) to (3.3,2);
\draw[blue] (3.3,2) to (3.8,0.5);
\node[right] at (3.8,0.5) {$x^+$};
\node[right] at (3.5,1.3){$I_{x}$};
\node[right] at (1,0) {$\vdots$};
\node[right] at (2.4,0) {$\vdots$};
\end{tikzpicture}
\end{center}

If $x^+$ does not exist, then $x$ is the root and $I_x$ is the entire vertex set of $\Gamma$; therefore, the connection labeled $I_x$ has nodes $\mathcal{L}_{c_0} \sqcup \cdots \sqcup \mathcal{L}_{c_d}$. One can check that this hyper $T$-path also satisfies the rules of Definition~\ref{def:hyperTpath}. We use $T_x^{+}$ to denote this hyper $T$-path.

We now prove that this list of hyper $T$-paths is in fact a complete list for a singleton set. This will provide the base case of Theorem \ref{thm:CanPeelSingleton}, which describes all hyper $T$-paths for any connected set~$S$.

\begin{Theorem}\label{thm:TPathOneVertex}
Let $\Gamma$ be a tree with rooted cluster $\mathcal{C}_v$ and $y$ be an arbitrary vertex in $\Gamma$ with $\Gamma^{v}_{\lessdot y} = \{ c_0, \dots, c_d \}$. Then, with respect to $\mathcal{C}_v$, the collection of hyper $T$-paths for $\{ y \}$ consists exactly of $T_y^+$ and $T_{y}^{c_i}$ for each $c_i \in \Gamma^{v}_{\lessdot y}$.
\end{Theorem}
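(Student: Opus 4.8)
The plan is to prove both directions: that each listed diagram is a hyper $T$-path, and that nothing else is. The first is the routine check of Rules~(1)--(9) alluded to in the construction above; I would do it node-by-node on the degree conditions and observe that in $\Gamma'$ every shortest path leaving the region below $y$ runs through $y$ and then $y^{+}$, which is exactly what Rules~(7)--(8) require. The substance is the converse, so fix an arbitrary hyper $T$-path $\alpha$ for $\{y\}$; discarding the trivial case $\{y\}\in\mathcal C_v$, write $\Gamma^{v}_{\lessdot y}=\{c_0,\dots,c_d\}$ with $d\ge 0$.

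First I would pin down the skeleton of $\alpha$. By Lemma~\ref{lem:evenconnections} (with $x=y$) the even connections of $\alpha$ are exactly one copy each of $I_{c_0},\dots,I_{c_d}$; in particular $I_y$ and $I_{y^{+}}$ never occur as even connections. Since $(I_{c_i})'=\{y\}\cup\mathcal L_{c_i}$, Rule~(1) forces each $I_{c_i}$ to meet exactly one node labelled $y$, and Rule~(5) says each node labelled $y$ meets exactly one even connection; so $\alpha$ has exactly $d+1$ internal nodes $n_0,\dots,n_d$ labelled $y$, with $n_i$ on $I_{c_i}$. Rules~(5)--(6) and Lemma~\ref{lem:internalnodes} force every internal node to be labelled by $y$ or by an element of $\mathcal L_y=\bigsqcup_i\mathcal L_{c_i}$; and since for $\ell\in\mathcal L_{c_i}$ the only even connection that can meet $\ell$ is $I_{c_i}$, which meets it once, there is exactly one node labelled $\ell$ for each $\ell\in\mathcal L_y$. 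Adjoining the boundary nodes $c_0,\dots,c_d$ (and $y^{+}$, when it exists) fixes the node set of $\alpha$. Next I would catalogue the odd connections: a connection can only carry a label all of whose endpoints occur among these labels, and a short check leaves exactly the edges $\{y,c_i\}$, $\{y,y^{+}\}$ and $\{c_i,c_i'\}$ (the last only when $c_i$ is a leaf) together with the hyperedges $I_y$ and $I_z$ for grandchildren $z\lessdot c_i$. In particular the unique odd connection at a node $\ell\in\mathcal L_{c_i}$ is either $I_y$ --- whose endpoint set $\{y^{+}\}\cup\mathcal L_y$ drags in \emph{every} node of $\mathcal L_y$ --- or a ``local'' connection ($I_{z(\ell)}$, or $\{c_i,c_i'\}$) joining $\ell$ to the boundary node $c_i$.

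This gives a dichotomy. If $I_y$ occurs in $\alpha$, then it is forced to be the odd connection at every $\ell\in\mathcal L_y$; tracing the remaining incidences (each $n_i$ then meets only the edge $\{y,c_i\}$ on the odd side, and $y^{+}$ meets only $I_y$) and checking against Rules~(7)--(8) gives $\alpha=T_y^{+}$. If $I_y$ does not occur, then every $\ell$ uses a local connection into its $c_i$ and $y^{+}$ (when present) can be reached only by the edge $\{y,y^{+}\}$; here connectivity of $\alpha$ together with Rule~(7) --- every path between boundary nodes $c_i,c_j$ traverses exactly $I_{c_i},I_{c_j}$, in that order --- and Lemma~\ref{lem:NoExtendedVertexEvenStep} force exactly one $c_i$ to be ``chained down'' to $\mathcal L_{c_i}$ while each other $c_j$ attaches directly to $n_j$ via $\{y,c_j\}$, so that $\alpha=T_y^{c_i}$. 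The hard part is this last step: I expect the real work to be ruling out the hybrid wirings (two $c_i$ chaining down at once, the $n_i$-blobs strung into a path rather than a star, a spurious use of $\{y,y^{+}\}$ alongside $I_y$, and so on), and it is the ordering clauses in Rules~(7)--(8), fed through Lemma~\ref{lem:NoExtendedVertexEvenStep}, that supply the contradictions. A secondary nuisance is the root case, where $y^{+}$ is either the added vertex $v'$ or absent, and the argument has to be read with the conventions fixed just before the theorem.
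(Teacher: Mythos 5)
Your skeleton analysis is sound and in places cleaner than the paper's: using Lemma~\ref{lem:evenconnections} to fix the even connections as exactly one copy of each $I_{c_0},\dots,I_{c_d}$, and then Rules~(1) and (5)--(6) to count one internal node labelled $y$ per even connection and exactly one node per element of $\mathcal{L}_y$, correctly pins down the node set. The gap is in your catalogue of odd connections. The hyperedge $I_{c_i}$ itself has endpoint set $\{y\}\cup\mathcal{L}_{c_i}$, all of which are available labels, so it is an admissible odd connection; your list omits it. This omission is not cosmetic: in $T_y^{c_0}$ (for $d\geq 1$) the unique odd connection at a node $\ell\in\mathcal{L}_{c_j}$ with $j\neq 0$ is precisely an odd connection labelled $I_{c_j}$, joining $\ell$ and the rest of $\mathcal{L}_{c_j}$ to the internal node labelled $y$ at the far end of the even connection $I_{c_0}$ --- it is neither $I_y$ nor a ``local'' connection into the boundary node $c_j$. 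Consequently the second branch of your dichotomy is false as stated: if $I_y$ does not occur, it does \emph{not} follow that every $\ell$ attaches locally to its own $c_i$; that conclusion would force all $d+1$ boundary nodes to be chained down simultaneously, which is exactly the configuration the paper's third case shows to be impossible when $d\geq 1$, and it fails to produce any of the $T_y^{c_i}$. As written, branch two proves that no hyper $T$-path exists when $I_y$ is absent, contradicting the theorem.

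To repair this you would restore $I_{c_i}$ to the catalogue and then run the case analysis the paper uses: classify $\alpha$ by how many boundary nodes $c_i$ fail to attach directly to a $y$-node, i.e., chain down to $\mathcal{L}_{c_i}$. Zero gives $T_y^+$; exactly one gives $T_y^{c_i}$, with the remaining sets $\mathcal{L}_{c_j}$ forced onto odd connections labelled $I_{c_j}$ into the single free $y$-node by Rule~(7) and Lemma~\ref{lem:NoExtendedVertexEvenStep}; and two or more is impossible because the resulting components cannot be joined without violating Rule~(6) at the nodes of some $\mathcal{L}_{c_i}$, Rule~(9), or Lemma~\ref{lem:NoExtendedVertexEvenStep}. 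Your first branch ($I_y$ present forces $\alpha=T_y^+$) is essentially correct, and your treatment of the forward direction as a routine rule-check matches the paper, which likewise asserts it without detail.
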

\begin{proof}
By Lemma~\ref{lem:internalnodes}, the internal nodes in a hyper $T$-path for $\{y\}$ can only be labelled by $y$ or elements of $\mathcal{L}_y$. Accordingly, the odd connections adjacent to the each boundary node $c_i$ can connect to nodes labeled by $y$ or by vertices in $\mathcal{L}_{c_i}$. By Rules (7) and (8), a path in the hyper $T$-path from $c_i$ to any other boundary node must first use the even connection $I_{c_i}$. In addition, by Lemma \ref{lem:NoExtendedVertexEvenStep}, we cannot have a path from $c_i$ to any other boundary node which uses an even step between two extended vertices. If $c_i$ connected to a set of nodes labeled by a~proper subset of $\mathcal{L}_{c_i}$, then since the even connection incident to these nodes must be labeled~$I_{c_i}$ and have all vertices from $\mathcal{L}_{c_i}$ as labels of nodes, we would create a path which Lemma \ref{lem:NoExtendedVertexEvenStep} forbids. Thus, if $c_i$ does not connect to a node labeled~$y$, it must connect to a collection of nodes labeled by all vertices in $\mathcal{L}_{c_i}$. To determine the set of valid hyper $T$-paths, we will consider how many of the nodes labeled by boundary vertices $c_i$ can connect directly to nodes labeled by~$y$.\looseness=1

First, consider the case where all of the nodes labeled by boundary vertices $c_i$ connect via an odd connection to nodes labeled by $y$. By Rules (7) and (8), this determines the label of the subsequent even connection in each branch and leaves us with the following configuration:\looseness=1

\begin{center}
\begin{tikzpicture}[scale = 1.5]
\node[left] at (0,2) {$c_0$};
\node[left] at (0,1) {$c_{1}$};
\node[left] at (0,0.5) {$\vdots$};
\node[left] at (0,0) {$c_{d}$};
\draw[blue] (0,2) to (1,2);
\draw[blue] (0,1) to (1,1);
\draw[blue] (0,0) to (1,0);
\node[right] at (1,2) {$y$};
\node[right] at (1,1) {$y$};
\node[right] at (1,0) {$y$};
\draw[red,densely dashed] (1.4,2) to (2.1,2);
\draw[red,densely dashed] (2.1,2) to (2.4,2);
\draw[red,densely dashed] (2.1,2) to (2.4,2.2);
\draw[red,densely dashed] (2.1,2) to (2.4,1.8);
\node[above] at (1.9,2) {$I_{c_0}$};
\draw[red,densely dashed] (1.4,1) to (2.1,1);
\draw[red,densely dashed] (2.1,1) to (2.4,1);
\draw[red,densely dashed] (2.1,1) to (2.4,1.2);
\draw[red,densely dashed] (2.1,1) to (2.4,0.8);
\node[above] at (1.9,1) {$I_{c_{1}}$};
\draw[red,densely dashed] (1.4,0) to (2.1,0);
\draw[red,densely dashed] (2.1,0) to (2.4,0);
\draw[red,densely dashed] (2.1,0) to (2.4,0.2);
\draw[red,densely dashed] (2.1,0) to (2.4,-0.2);
\node[above] at (1.9,0) {$I_{c_{d}}$};
\node[right] at (2.4,2) {$\mathcal{L}_{c_0}$};
\node[right] at (2.4,1) {$\mathcal{L}_{c_{1}}$};
\node[right] at (2.4,0) {$\mathcal{L}_{c_{d}}$};
\node[right] at (1,0.5) {$\vdots$};
\node[right] at (2.4,0.5) {$\vdots$};
\end{tikzpicture}
\end{center}

Because the sets $I_{c_0},\ldots,I_{c_d}$ are a complete set of the elements of the cluster $\mathcal{C}_v$ which are incompatible with \new{$\{y\}$}, by Lemma \ref{lem:evenconnections} we can no longer add even connections. This also means we cannot introduce more internal nodes, since each must have an incident even connection. All of the nodes in $\mathcal{L}_{c_0} \new{\sqcup \cdots \sqcup} \mathcal{L}_{c_d}$ can be connected by odd connections labeled either $I_y$ or $I_u$ where $u > y$. If we use $I_u$, then we would have to introduce the node $u^+$. Because $u^{+}$ is not in either~$S$ or~$S'$, however, it cannot label a node. Therefore, we must use the connection~$I_y$. If $y$ is not the root of $\Gamma$ or if $y$ is the root and $\textrm{deg}(y) = 1$, this connection will also have node~$y^+$. If $y$ is the root and $\deg(y) > 1$, then $S' = \Gamma_{\lessdot y}^v$, and a connection labeled $I_y$ connects the nodes labeled by all vertices in $\mathcal{L}_{c_0} \new{\sqcup \cdots \sqcup} \mathcal{L}_{c_d}$ to each other. Thus, this hyper $T$-path must be~$T_y^{+}$.

Next, we consider the case where all but one of the boundary nodes are connected to $y$. Without loss of generality, let $c_0$ be the unique boundary node which instead is connected via odd connections to the set $\mathcal{L}_{c_0}$. The following even connections on each branch are again forced by Rules~(7) and~(8):

\begin{center}
\begin{tikzpicture}[scale = 1.5]
\node[left] at (0,2) {$c_0$};
\node[left] at (0,1) {$c_{1}$};
\node[left] at (0,0.5) {$\vdots$};
\node[left] at (0,0) {$c_{d}$};
\draw[blue] (0,2) to (0.7,2);
\draw[blue] (0,2.05) to (1,2.2);
\draw[blue] (0.7,2) to (1,2.1);
\draw[blue] (0.7,2) to (1,1.9);
\draw[blue] (0,1.95) to (1,1.8);
\node[above] at (0.5,2.1) {$\cup_{w \in \Gamma_{\lessdot c_0}^v} I_{w}$};
\draw[blue] (0,1) to (1,1);
\draw[blue] (0,0) to (1,0);
\node[right] at (1,2) {$\mathcal{L}_{c_0}$};
\node[right] at (1,1) {$y$};
\node[right] at (1,0.5) {$\vdots$};
\node[right] at (1,0) {$y$};
\draw[red,densely dashed] (1.4,2.2) to (1.7,2);
\draw[red,densely dashed] (1.4,2) to (1.7,2);
\draw[red,densely dashed] (1.4,1.8) to (1.7,2);
\draw[red,densely dashed] (1.7,2) to (2.4,2);
\node[above] at (1.9,2) {$I_{c_0}$};
\draw[red,densely dashed] (1.4,1) to (2.1,1);
\draw[red,densely dashed] (2.1,1) to (2.4,0.8);
\draw[red,densely dashed] (2.1,1) to (2.4,1);
\draw[red,densely dashed] (2.1,1) to (2.4,1.2);
\node[above] at (1.9,1) {$I_{c_{1}}$};
\draw[red,densely dashed] (1.4,0) to (2.1,0);
\draw[red,densely dashed] (2.1,0) to (2.4,0.2);
\draw[red,densely dashed] (2.1,0) to (2.4,0);
\draw[red,densely dashed] (2.1,0) to (2.4,-0.2);
\node[above] at (1.9,0) {$I_{c_{d}}$};
\node[right] at (2.4,2) {$y$};
\node[right] at (2.4,1) {$\mathcal{L}_{c_{1}}$};
\node[right] at (2.4,0.5) {$\vdots$};
\node[right] at (2.4,0) {$\mathcal{L}_{c_{d}}$};
\end{tikzpicture}
\end{center}

As in the previous case, we cannot introduce any more internal nodes since we have used the complete set of allowable even connections. A connection between $\cup_{i=1}^d \mathcal{L}_{c_i}$ and $y^+$ would also have vertices in $\mathcal{L}_{c_0}$ as nodes. We can neither introduce new nodes nor, by (6), connect this new connection to the existing set of nodes $\mathcal{L}_{c_0}$. Thus, we must connect each set $\mathcal{L}_{c_i}$ individually to $y$, using a connection labeled $I_{c_i}$. That is, any hyper $T$-path with this configuration which satisfies every rule must be $T_{y}^{c_0}$.

Finally, we consider the case where more than one of the boundary nodes $c_i$ connect to~$\mathcal{L}_{c_i}$ via an odd connection. If $y$ only covers one element, then this case is not possible. Without loss of generality, suppose $c_0,\ldots,c_k$ for $1 \leq k \leq d$ connect to $\mathcal{L}_{c_0},\ldots,\mathcal{L}_{c_k}$ respectively:

\begin{center}
\begin{tikzpicture}[scale = 1.5]
\node[left] at (0,1.5) {$c_0$};
\node[left] at (0,1) {$\vdots$};
\node[left] at (0,0) {$c_k$};
\node[left] at (0,-1) {$c_{k+1}$};
\node[left] at (0,-1.5) {$\vdots$};
\node[left] at (0,-2) {$c_d$};
\draw[blue] (0,1.5) to (0.7,1.5);
\draw[blue] (0,1.55) to (1,1.7);
\draw[blue] (0.7,1.5) to (1,1.6);
\draw[blue] (0.7,1.5) to (1,1.4);
\draw[blue] (0,1.45) to (1,1.3);
\node[above] at (0.5,1.6) {$\cup_{w \in \Gamma_{\lessdot c_0}^v} I_{w}$};
\draw[blue] (0,0) to (0.7,0);
\draw[blue] (0,0.05) to (1,0.2);
\draw[blue] (0.7,0) to (1,0.1);
\draw[blue] (0.7,0) to (1,-.1);
\draw[blue] (0,-0.05) to (1,-.2);
\node[above] at (0.5,0.1) {$\cup_{w \in \Gamma_{\lessdot c_k}^v} I_{w}$};
\draw[blue] (0,-1) to (1,-1);
\draw[blue] (0,-2) to (1,-2);
\node[right] at (1,1.5) {$\mathcal{L}_{c_0}$};
\node[right] at (1,1) {$\vdots$};
\node[right] at (1,0) {$\mathcal{L}_{c_k}$};
\node[right] at (1,-1) {$y$};
\node[right] at (1,-1.5) {$\vdots$};
\node[right] at (1,-2) {$y$};
\draw[red,densely dashed] (1.4,1.7) to (1.7,1.5);
\draw[red,densely dashed] (1.4,1.5) to (1.7,1.5);
\draw[red,densely dashed] (1.4,1.3) to (1.7,1.5);
\draw[red,densely dashed] (1.7,1.5) to (2.4,1.5);
\node[above] at (1.9,1.5) {$I_{c_0}$};
\draw[red,densely dashed] (1.4,0.2) to (1.7,0);
\draw[red,densely dashed] (1.4,0) to (1.7,0);
\draw[red,densely dashed,densely dashed] (1.4,-0.2) to (1.7,0);
\draw[red,densely dashed] (1.7,0) to (2.4,0);
\node[above] at (1.9,0) {$I_{c_k}$};
\draw[red,densely dashed] (1.4,-1) to (2.1,-1);
\draw[red,densely dashed] (2.1,-1) to (2.4,-0.8);
\draw[red,densely dashed] (2.1,-1) to (2.4,-1);
\draw[red,densely dashed] (2.1,-1) to (2.4,-1.2);
\node[above] at (1.9,-1) {$I_{c_{k+1}}$};
\draw[red,densely dashed] (1.4,-2) to (2.1,-2);
\draw[red,densely dashed] (2.1,-2) to (2.4,-1.8);
\draw[red,densely dashed] (2.1,-2) to (2.4,-2);
\draw[red,densely dashed] (2.1,-2) to (2.4,-2.2);
\node[above] at (1.9,-2) {$I_{c_{d}}$};
\node[right] at (2.4,1.5) {$y$};
\node[right] at (2.4,1) {$\vdots$};
\node[right] at (2.4,0) {$y$};
\node[right] at (2.4,-1) {$\mathcal{L}_{c_{k+1}}$};
\node[right] at (2.4,-1.5) {$\vdots$};
\node[right] at (2.4,-2) {$\mathcal{L}_{c_{d}}$};
\end{tikzpicture}
\end{center}

The same discussion as in the previous case holds. However, now there are multiple nodes labeled $y$ which need an adjacent odd connection. Each node in a set $\mathcal{L}_{c_i}$ can only be adjacent to one odd connection by Rule (6). Moreover, these nodes must connect to a node labeled $y$; otherwise we would create subpaths in the hyper $T$-path that violate Rule (9).
Thus, there is no way to connect all of these initial components while still satisfying the rules for a valid hyper $T$-path.
\end{proof}

\subsection[Hyper T-paths for general sets]{Hyper $\boldsymbol{T}$-paths for general sets}\label{sec:gen-sets}

The goal of this section is the proof of Theorem \ref{thm:CanPeelSingleton}, which will enable us to prove our second main result, Theorem~\ref{thm:main2}, in which we verify that any hyper $T$-path satisfying the rules given in Definition~\ref{def:hyperTpath} can be constructed by pasting together hyper $T$-paths for singleton sets, which we described in Section~\ref{sec:single-sets}.

The procedure for \emph{pasting} two hyper $T$-paths together is as follows. Let $u$, $v$ be vertices connected by an edge $(u,v)$ in $\Gamma$. Let $A$ and $B$ be disjoint subsets of the vertex set of $\Gamma$ such that $A$ contains $u$ but not $v$ and $B$ contains $v$ but not $u$. Then $v \in A'$ and $u \in B'$. Let~$T_A$,~$T_B$ be hyper $T$-paths for the sets $A$ and $B$, respectively. Suppose that in~$T_A$, the boundary node labeled $v$ is connected via an odd connection to a node labeled $u$ or in $T_B$ the boundary node labeled $u$ is connected via an odd connection to a node labeled $v$. By Rules~(7) and~(8), if this is not true for both hyper $T$-paths, then in one of them there are nodes labeled $u$ and $v$ connected by a sequence of an odd connection and an even connection. Then, we can paste $T_A$ and $T_B$ together as follows. Suppose without loss of generality that in $T_A$ there is an odd connection with nodes labeled~$u$ and~$v$. Then deleting that connection and the node $u$ and identifying the node $v$ in $T_A$ and $T_B$ gives us $T_A\oplus T_B$, a hyper $T$-path for $A \cup B$.

\begin{Example}
Consider the graph $\Gamma$ in Figure \ref{fig:example_hypergraph}. Let $A = \{2\}$ and $B = \{3\}$. Then, $u = 2$ and $v = 3$. Below we display a hyper $T$-path for $A$, call it $T_A$, and a hyper $T$-path for $B$, call it $T_B$. In the notation of Section~\ref{sec:single-sets}, $T_A = T_2^1$ and $T_B = T_3^4$. We can see that $T_A$ has the boundary node 3 joined to an internal node 2 by an odd edge and $T_B$ has a boundary node 2. We paste these two hyper $T$-paths together by deleting the boundary node 3 from $T_A$ and identifying the internal node 2 to which this boundary node was formerly connected with the boundary node 2 from $T_B$:
$$\raisebox{-0.4in}{\tikzfig{Tpath-A}}\ \oplus \ \raisebox{-0.4in}{\tikzfig{Tpath-B}} \ =$$
$$\tikzfig{Tpath-1-c}$$

In this case, since $T_B$ has a connection between the boundary node 3 and an internal node 2, we could have instead thought about this pasting as deleting the boundary node 2 from $T_B$ and identifying the formerly connected node 3 with the boundary node 3 from $T_A$. Both processes give the same result.

In this case, any pair of $T$-paths for $A $ and $B$ can be pasted except the pair $T_2^+$ and $T_3^2$.
\end{Example}

\begin{Proposition}\label{prop:pasting}
Let $A$ and $B$ be disjoint connected subsets of $\Gamma$ such that there exists an edge $(u,v)$ in $\Gamma$ where $A$ contains $u$ and not $v$ and $B$ contains $v$ but not $u$. If $T_A$ is a hyper $T$-path for $A$ and $T_B$ is a hyper $T$-path for $B$ such that $u$ and $v$ are joined by a connection in at least one of $T_A$, $T_B$, then $T_A\oplus T_B$ is a hyper $T$-path for $A \sqcup B$.
\end{Proposition}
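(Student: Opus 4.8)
The plan is to verify directly that the pasted diagram $T_A\oplus T_B$ satisfies each of the nine conditions in Definition~\ref{def:hyperTpath} with respect to $S:=A\sqcup B$. By hypothesis the nodes labeled $u$ and $v$ are joined by a connection in at least one of $T_A$, $T_B$; this connection is necessarily odd by Rule~(4), and we may assume, by the symmetry between $A$ and $B$, that it lies in $T_A$. Write $\bar v$ for its endpoint labeled $v$ (a boundary node of $T_A$, since $v\in A'$), $\bar u$ for its endpoint labeled $u$ (an internal node of $T_A$, since $u\in A$), and $c$ for the connection itself; pasting deletes $\bar v$ and $c$ from $T_A$ and identifies $\bar u$ with the boundary node of $T_B$ labeled $u\in B'$.

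First I would record the preliminaries and clear away the easy rules. Since $A$ and $B$ are connected and linked by the edge $(u,v)$, the set $S$ is connected; since $\Gamma$ is a tree, $(u,v)$ is the only edge between $A$ and $B$ and no vertex of $\Gamma'$ outside $S$ is adjacent to both $A$ and $B$ (that would create a cycle), so $A'\cap B'=\varnothing$ and $S'=(A'\setminus\{v\})\sqcup(B'\setminus\{u\})$. I would then check that $\bar v$ has degree one in $T_A$: a second odd connection at $\bar v$ would, by Rules~(7) and~(8) together with Lemmas~\ref{lem:evenconnections} and~\ref{lem:NoExtendedVertexEvenStep}, force either a second even connection labeled $I_v$ or an even step between two extended vertices, both of which are impossible. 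Hence deleting $\bar v$ and $c$ leaves $T_A$ connected, and after the identification the whole diagram is connected. The boundary nodes of $T_A\oplus T_B$ are then exactly those of $T_A$ and $T_B$ other than $\bar v$ and the identified copy of $u$, so by the description of $S'$ they are labeled by $S'$ with multiplicity one, which is Rule~(2); Rules~(1) and~(3) are immediate, because pasting only removes one odd connection and one node and fuses two equally-labeled nodes, preserving all remaining incidences and all even/odd markings.

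Next I would handle the local rules. Every node other than the fused node labeled $u$ keeps its original incident connections and hence still satisfies whichever of Rules~(4)--(6) governed it. The fused node is labeled by $u\in A\subseteq S$; it retains its single even connection from the $T_A$-side (when $u$ is maximal in $A$, this is the even connection $I_{a_2}$ with $a_2\lessdot u$ that Rule~(8) forces, in place of $I_u$) and acquires at least one odd connection from the $T_B$-side, so it is incident to exactly one even and at least one odd connection, as Rule~(5) requires. A short bookkeeping check, using $A'\cap B'=\varnothing$ and the fact that $u$ is maximal in $S$ exactly when the edge $(u,v)$ goes downward from $u$ to $v\in B$ (in which case $I_u$ labels an even connection in neither $T_A$ nor $T_B$, matching its exclusion for $S$), should show that no even-connection label is repeated and that the even connections of $T_A\oplus T_B$ form precisely the list $\{I_y: y\in(S\cup S')\setminus\{x_S,x_S^+\}\}$ from Lemma~\ref{lem:evenconnections}, with $x_S=\max(x_A,x_B)$; this is exactly the completeness information demanded by Rules~(7) and~(8).

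Finally I would turn to the path rules, which I expect to be the hard part. Any path in $T_A\oplus T_B$ between two boundary nodes either stays in the surviving part of $T_A$, stays in $T_B$, or passes through the fused node $u$; in the first two cases the deleted $\bar v$ and $c$ cannot lie on it, so Rules~(7)--(9) follow from the corresponding rules for $T_A$ or $T_B$. In the third case the path splits as a path in $T_A$ from a boundary node $w_1\in A'\setminus\{v\}$ to $u$, followed by a path in $T_B$ from $u$ to a boundary node $w_2\in B'\setminus\{u\}$, and the task is to show that concatenating the even-connection label sequences guaranteed for these two halves by Rules~(7) and~(8) yields exactly the sequence prescribed for $S$, and that the node labels occurring along the path are the union of those allowed on the two halves (Rule~(9)). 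This rests on the tree fact that the shortest $\Gamma'$-path from $w_1$ to $w_2$ is the concatenation, across the edge $(u,v)$, of the shortest $\Gamma'$-path from $w_1$ toward $u$ with the shortest from $v$ toward $w_2$, so that the meet $w_1\vee w_2$ and all the intermediate vertices match up. I anticipate the real difficulty to be carrying this splicing through every configuration of the root (inside $A$, inside $B$, or outside $S$) and of $u$ relative to the maximal element of $S$, and in particular handling the special $p=1$ and $q=0$ clauses of Rules~(8) and~(9) that apply when $w_1$ or $w_2$ is adjacent to the maximal element of $S$.
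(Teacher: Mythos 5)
Your proposal takes the same approach as the paper, namely a direct rule-by-rule verification of Definition~\ref{def:hyperTpath} for the pasted diagram; in fact the paper's entire proof is the single sentence ``It is easy to check that this construction follows all of the rules listed in Definition~\ref{def:hyperTpath},'' so your outline (correctly reading the pasting operation as in the paper's worked example: delete the boundary node $v$ and its odd connection from $T_A$ and identify the internal node $u$ with the boundary node $u$ of $T_B$) supplies strictly more detail than the published argument. The plan is sound and correctly isolates where the real work lies, in splicing the even-label sequences of Rules~(7)--(9) across the edge $(u,v)$.
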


\begin{proof}
It is easy to check that this construction follows all of the rules listed in Defini\-tion~\ref{def:hyperTpath}.\looseness=1
\end{proof}

\begin{Theorem}\label{thm:CanPeelSingleton}
Let $S$ be a connected set of vertices of the graph $\Gamma$. Let $\mathcal{C}_v$ be a rooted cluster on~$\Gamma$. Let $y$ be a minimal element of $S$; that is, there are no elements in $S$ less than $y$. Then, any hyper $T$-path associated to $S$ can be constructed by pasting a hyper $T$-path for $\{ y \}$, $T_{y}^*$, together with a hyper $T$-path for $S \backslash \{ y \}$.
\end{Theorem}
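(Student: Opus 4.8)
The plan is to take an arbitrary hyper $T$-path $\alpha$ for $S$ and peel off the part of it surrounding the minimal vertex $y$. First observe that since $y$ is minimal in $S$, every element of $\Gamma_{\lessdot y}^v=\{c_0,\dots,c_d\}$ lies in $S'$ and hence labels a boundary node of $\alpha$; moreover, if $|S|>1$ then connectedness of $S$ forces $y^+\in S$, whereas if $|S|=1$ the statement is exactly Theorem~\ref{thm:TPathOneVertex} (with empty second factor, since $S\setminus\{y\}=\varnothing$). So assume $|S|>1$, put $B=S\setminus\{y\}$, and aim to produce a hyper $T$-path $T_A$ for $\{y\}$ and a hyper $T$-path $T_B$ for $B$ with $\alpha=T_A\oplus T_B$, the pasting being along the edge $(y,y^+)$ of $\Gamma$ as in Section~\ref{sec:gen-sets}.

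The first step is a local analysis of $\alpha$ around $y$, which I expect to run essentially in parallel with the proof of Theorem~\ref{thm:TPathOneVertex}: because the tree separates $\{c_0,\dots,c_d\}$ and the sets $\mathcal{L}_{c_i}$ from $B$ at the vertex $y$, the ingredients used there carry over. Lemma~\ref{lem:internalnodes} forces every node adjacent to a boundary node $c_i$ to be labelled by $y$ or by an element of $\mathcal{L}_{c_i}$; Lemma~\ref{lem:evenconnections} shows that each of $I_{c_0},\dots,I_{c_d}$ labels exactly one even connection of $\alpha$; and Lemma~\ref{lem:NoExtendedVertexEvenStep} together with Rules~(7)--(9) forces each $c_i$ to be joined by odd connections either to a single node labelled $y$ or to all of $\mathcal{L}_{c_i}$, with at most one $c_i$ of the latter type (two or more being impossible for exactly the reason given in Theorem~\ref{thm:TPathOneVertex}). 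The leftover copies of $\mathcal{L}$-nodes must then be joined toward a node labelled $y^+$ by an odd connection labelled $I_y$, which Rules~(7)--(9) single out as the only admissible choice. The sub-diagram of $\alpha$ formed by $c_0,\dots,c_d$, the $y$-nodes and $\mathcal{L}_{c_i}$-nodes they reach, the even connections $I_{c_i}$, and the odd connections among them---where the $y^+$-node at the far end of that odd $I_y$-connection is now regarded as a boundary node, retaining only its incident odd $I_y$-connection---is then precisely one of the singleton hyper $T$-paths $T_y^{+}$ or $T_y^{c_i}$ of Section~\ref{sec:single-sets}; call it $T_A$.

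Next I would take $T_B$ to be the remainder of $\alpha$ together with a fresh boundary node labelled $y$, attached by a single odd connection along the edge $(y,y^+)$ of $\Gamma'$ to the copy of the $y^+$-node from which $T_A$ was severed. One then checks that $T_B$ is a hyper $T$-path for $B$: it is connected; its boundary nodes are $\big(S'\setminus\{c_0,\dots,c_d\}\big)\cup\{y\}=B'$; its even connections are those of $\alpha$ other than $I_{c_0},\dots,I_{c_d}$, in agreement with Lemma~\ref{lem:evenconnections} for $B$ (in particular $I_y$ is retained, as the first even connection met along any path leaving the new boundary node $y$); and Rules~(1)--(9) for $T_B$ are inherited from those for $\alpha$, the only genuinely new check being that the added boundary node $y$ obeys Rules~(4) and~(7)--(9), which holds because in $\alpha$ the even connection $I_y$ already occupies exactly the position, just above $y^+$, that it must for $y$. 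Finally, applying the pasting operation of Section~\ref{sec:gen-sets} to $T_A$ and $T_B$ along $(y,y^+)$---delete the odd $(y,y^+)$-connection and the boundary node $y$ from $T_B$ and identify the two $y^+$-nodes---returns $\alpha$ by construction, so $\alpha=T_A\oplus T_B$ as required (iterating then yields the full decomposition of $\alpha$ into singleton pieces).

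I expect the main obstacle to be the rule-by-rule verification that the constructed $T_B$ really is a hyper $T$-path for $S\setminus\{y\}$---above all that the new boundary node $y$ satisfies Rules~(7)--(9) and that no even connection of $\alpha$ besides the $I_{c_i}$ is stranded---together with making rigorous the claim that the local analysis at $y$ is unchanged from the singleton case. For the latter one must rule out the configurations at the $c_i$ that the extra structure of $\alpha$ lying above $y^+$ might seem to permit (for instance, routing the leftover $\mathcal{L}$-nodes via an odd connection $I_u$ with $u>y$, or via $I_x$ to the boundary node $x^+$); this is exactly where the ``$u^+\notin S\cup S'$''-type obstructions and Lemma~\ref{lem:NoExtendedVertexEvenStep} do the work, now applied with $S$ in place of $\{y\}$.
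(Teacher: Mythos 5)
Your proposal follows essentially the same route as the paper's proof: a local analysis at the minimal vertex $y$ using Lemmas~\ref{lem:internalnodes}, \ref{lem:evenconnections}, and~\ref{lem:NoExtendedVertexEvenStep} together with Rules~(7)--(9) to force the neighbourhood of the boundary nodes $c_0,\dots,c_d$ into one of the singleton configurations $T_y^{+}$ or $T_y^{c_i}$ (ruling out, exactly as in Theorem~\ref{thm:TPathOneVertex}, the case where two or more of the $c_i$ attach to $\mathcal{L}_{c_i}$), followed by severing that piece and verifying that the remainder is a hyper $T$-path for $S\setminus\{y\}$ which pastes back to give $\alpha$. The one small imprecision is your claim that the fresh boundary node $y$ of $T_B$ is always attached to $y^+$ by the odd edge $(y,y^+)$: this is what happens when the local piece is $T_y^{+}$, but when it is $T_y^{c_0}$ the paper points out that the boundary node $y$ of $T_B$ need not connect to $y^+$ directly, so $T_B$ should simply be defined as the remainder of $\alpha$ with the internal node $y$ retained as a boundary node, carrying whichever odd connections of $\alpha$ at that node do not belong to $T_A$.
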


\begin{proof}
We proceed by induction on $\vert S \vert$. The $\vert S \vert = 1$ case is covered by Theorem~\ref{thm:TPathOneVertex}, so let $\vert S \vert > 1$, and suppose we have proved the statement of the theorem for all sets of size $\vert S \vert-1$. Let~$y$ be a minimal element of $S$ and let $\Gamma_{\lessdot y}^v = \{c_0,\ldots,c_d\}$ with $d \geq 0$. As in the singleton case, each boundary node $c_i$ must connect via odd connection to either~$y$ or all of $\mathcal{L}_{c_i}$. We consider the same cases as in the singleton case.

First, consider the case where all $c_i$ connect directly to $y$ via an odd connection. Rules~(7) and~(8) then determine the label attached to the even connections adjacent to each of these internal nodes labeled $y$. By Rule~(7), any subpath in the hyper $T$-path from $c_i$ to $c_j$ for $i \neq j$ must use even connections labeled $I_{c_i}$ and $I_{c_j}$, in this order. Because hyper $T$-paths must be connected, the nodes labeled by vertices in $\mathcal{L}_{c_0}, \dots, \mathcal{L}_{c_d}$ must all be joined by a series of odd connections. Moreover, the first two even connections on a path from $c_i$ to an element of $S'$ which is not labelled by an element of $\Gamma_{\lessdot y}^v$ must be $I_{c_i}$ and $I_y$. The nodes in $\mathcal{L}_{c_0} \new{\sqcup \cdots \sqcup} \mathcal{L}_{c_d}$ must then all connect to one or multiple endpoints of $I_y$. Recall that the endpoints of~$I_y$ are~$y^+$ and~$\mathcal{L}_{c_0} \new{\sqcup \cdots \sqcup} \mathcal{L}_{c_d}$ and, by Lemma~\ref{lem:evenconnections}, this hyper $T$-path can only use one even connection labeled~$I_y$. So, we must connect all the nodes in $\mathcal{L}_{c_i}$ to one node labeled~$y^+$:
\begin{center}
\begin{tikzpicture}[scale = 1.8]
\node[left] at (0,2) {$c_0$};
\node[left] at (0,1) {$c_{1}$};
\node[left] at (0,0) {$\vdots$};
\node[left] at (0,-1) {$c_{d}$};
\draw[blue] (0,2) to (1,2);
\draw[blue] (0,1) to (1,1);
\draw[blue] (0,-1) to (1,-1);
\node[right] at (1,2) {$y$};
\node[right] at (1,1) {$y$};
\node[right] at (1,-1) {$y$};
\draw[red,densely dashed] (1.4,2) to (2.1,2);
\draw[red,densely dashed] (2.1,2) to (2.4,2);
\draw[red,densely dashed] (2.1,2) to (2.4,2.2);
\draw[red,densely dashed] (2.1,2) to (2.4,1.8);
\node[above] at (1.9,2) {$I_{c_0}$};
\draw[red,densely dashed] (1.4,1) to (2.1,1);
\draw[red,densely dashed] (2.1,1) to (2.4,1);
\draw[red,densely dashed] (2.1,1) to (2.4,1.2);
\draw[red,densely dashed] (2.1,1) to (2.4,0.8);
\node[above] at (1.9,1) {$I_{c_{1}}$};
\draw[red,densely dashed] (1.4,-1) to (2.1,-1);
\draw[red,densely dashed] (2.1,-1) to (2.4,-1);
\draw[red,densely dashed] (2.1,-1) to (2.4,-0.8);
\draw[red,densely dashed] (2.1,-1) to (2.4,-1.2);
\node[above] at (1.9,-1) {$I_{c_{d}}$};
\node[right] at (2.4,2) {$\mathcal{L}_{c_0}$};
\node[right] at (2.4,1) {$\mathcal{L}_{c_{1}}$};
\node[right] at (2.4,-1) {$\mathcal{L}_{c_{d}}$};
\node[right] at (1,0) {$\vdots$};
\node[right] at (2.4,0) {$\vdots$};
\draw[blue] (3.3,-1) to (3.8,0.5);
\draw[blue] (3,-0.8) to (3.3,-1);
\draw[blue] (3,-1) to (3.3,-1);
\draw[blue] (3,-1.2) to (3.3,-1);
\draw[blue] (3.3,1) to (3.8,0.5);
\draw[blue] (3,0.8) to (3.3,1);
\draw[blue] (3,1) to (3.3,1);
\draw[blue] (3,1.2) to (3.3,1);
\draw[blue] (3,1.8) to (3.3,2);
\draw[blue] (3,2) to (3.3,2);
\draw[blue] (3,2.2) to (3.3,2);
\draw[blue] (3.3,2) to (3.8,0.5);
\node[right] at (3.5,1.3){$I_{y}$};
\node[right] at (3.8,0.5) {$y^+$};
\draw[red, densely dashed] (4.2,0.5) to (5.2,1);
\node[] at (5.4,1) {$*$};
\draw[blue] (4.2,0.5) to (5.2,0.23);
\draw[blue] (4.2,0.5) to (5.2,-0.23);
\node[] at (5.4,0) {$**$};
\node[left] at (5.2,0.15) {$\vdots$};
\end{tikzpicture}
\end{center}

We can decompose this $T$-path as $T_{y}^{+}$ and a $T$-path for $S - \{y\}$. In the latter $T$-path, $y$ is a~boundary node and is connected via an odd connection to $y^+$:
\begin{center}
\begin{tikzpicture}[scale = 1.8]
\node[left] at (0,2) {$c_0$};
\node[left] at (0,1) {$c_{1}$};
\node[left] at (0,0) {$\vdots$};
\node[left] at (0,-1) {$c_{d}$};
\draw[blue] (0,2) to (1,2);
\draw[blue] (0,1) to (1,1);
\draw[blue] (0,-1) to (1,-1);
\node[right] at (1,2) {$y$};
\node[right] at (1,1) {$y$};
\node[right] at (1,-1) {$y$};
\draw[red,densely dashed] (1.4,2) to (2.1,2);
\draw[red,densely dashed] (2.1,2) to (2.4,2);
\draw[red,densely dashed] (2.1,2) to (2.4,2.2);
\draw[red,densely dashed] (2.1,2) to (2.4,1.8);
\node[above] at (1.9,2) {$I_{c_{0}}$};
\draw[red,densely dashed] (1.4,1) to (2.1,1);
\draw[red,densely dashed] (2.1,1) to (2.4,1);
\draw[red,densely dashed] (2.1,1) to (2.4,1.2);
\draw[red,densely dashed] (2.1,1) to (2.4,0.8);
\node[above] at (1.9,1) {$I_{c_{1}}$};
\draw[red,densely dashed] (1.4,-1) to (2.1,-1);
\draw[red,densely dashed] (2.1,-1) to (2.4,-1);
\draw[red,densely dashed] (2.1,-1) to (2.4,-0.8);
\draw[red,densely dashed] (2.1,-1) to (2.4,-1.2);
\node[above] at (1.9,-1) {$I_{c_{d}}$};
\node[right] at (2.4,2) {$\mathcal{L}_{c_0}$};
\node[right] at (2.4,1) {$\mathcal{L}_{c_{1}}$};
\node[right] at (2.4,-1) {$\mathcal{L}_{c_{d}}$};
\node[right] at (1,0) {$\vdots$};
\node[right] at (2.4,0) {$\vdots$};
\draw[blue] (3.3,-1) to (3.8,0.5);
\draw[blue] (3,-0.8) to (3.3,-1);
\draw[blue] (3,-1) to (3.3,-1);
\draw[blue] (3,-1.2) to (3.3,-1);
\draw[blue] (3.3,1) to (3.8,0.5);
\draw[blue] (3,0.8) to (3.3,1);
\draw[blue] (3,1) to (3.3,1);
\draw[blue] (3,1.2) to (3.3,1);
\draw[blue] (3,1.8) to (3.3,2);
\draw[blue] (3,2) to (3.3,2);
\draw[blue] (3,2.2) to (3.3,2);
\draw[blue] (3.3,2) to (3.8,0.5);
\node[right] at (3.8,0.5) {$y^+$};
\node[right] at (3.5,1.3){$I_{y}$};
\node[right] at (1,0) {$\vdots$};
\node[right] at (2.4,0) {$\vdots$};
\node[] at (4.5,0.5){$\oplus$};
\node[left] at (5.2,0.5) {$y$};
\draw[blue] (5.2,0.5) -- (6.2,0.5);
\node[right] at (6.2,0.5) {$y^+$};
\draw[red, densely dashed] (6.6,0.5) to (7.6,1);
\node[] at (7.8,1) {$*$};
\draw[blue] (6.6,0.5) to (7.6,0.23);
\draw[blue] (6.6,0.5) to (7.6,-0.23);
\node[] at (7.8,0) {$**$};
\node[left] at (7.6,0.15) {$\vdots$};
\end{tikzpicture}
\end{center}

Now, we consider the case where exactly one boundary node, call it $c_0$, is not connected to~$y$ by an odd connection. Recall from the proof of Theorem~\ref{thm:TPathOneVertex} that we must then connect the node $c_0$ to all vertices in~$\mathcal{L}_{c_0}$.
As discussed in the previous case, these branches must again meet via odd connections, and the next even connection on each branch must be $I_y$. In order to connect, via an odd connection, the nodes labeled by $\mathcal{L}_{c_i}$ to nodes labeled by $\mathcal{L}_{c_j}$, for $i \neq j$, we would need to use a connection labeled either $I_y$ or $I_u$, where $u\in\Gamma_{> y}^v$. This would require introducing more internal nodes; at very least, we would have to introduce nodes labeled by vertices in $\mathcal{L}_{c_0}$. Then, by Rules \new{(7) and (8)}, the nodes labeled by vertices in $\mathcal{L}_{c_0}$ would be incident to an even connection with label $I_y$. Since this even connection would also have nodes labeled by elements in $\mathcal{L}_{c_i}$, we would create paths which violate Lemma~\ref{lem:NoExtendedVertexEvenStep}. Thus, we must connect each $\mathcal{L}_{c_i}$ for $1 \leq i\leq d$ directly to~$y$:
\begin{center}
\begin{tikzpicture}[scale = 1.8]
\node[left] at (0,2) {$c_0$};
\node[left] at (0,1) {$c_{1}$};
\node[left] at (0,0) {$\vdots$};
\node[left] at (0,-1) {$c_{d}$};
\draw[blue] (0,2) to (0.7,2);
\draw[blue] (0,2.05) to (1,2.2);
\draw[blue] (0.7,2) to (1,2.1);
\draw[blue] (0.7,2) to (1,1.9);
\draw[blue] (0,1.95) to (1,1.8);
\node[above] at (0.5,2.1) {$\bigcup_{w \in \Gamma_{\lessdot c_0}^v} I_{w}$};
\draw[blue] (0,1) to (1,1);
\draw[blue] (0,-1) to (1,-1);
\node[right] at (1,2) {$\mathcal{L}_{c_0}$};
\node[right] at (1,1) {$y$};
\node[right] at (1,0) {$\vdots$};
\node[right] at (1,-1) {$y$};
\draw[red,densely dashed] (1.4,2.2) to (1.7,2);
\draw[red,densely dashed] (1.4,2) to (1.7,2);
\draw[red,densely dashed] (1.4,1.8) to (1.7,2);
\draw[red,densely dashed] (1.7,2) to (2.4,2);
\node[above] at (1.9,2) {$I_{c_0}$};
\draw[red,densely dashed] (1.4,1) to (2.4,1);
\node[above] at (1.9,1) {$I_{c_{1}}$};
\draw[red,densely dashed] (1.4,-1) to (2.4,-1);
\node[above] at (1.9,-1) {$I_{c_{d}}$};
\node[right] at (2.4,2) {$y$};
\node[right] at (2.4,1) {$\mathcal{L}_{c_{1}}$};
\node[right] at (2.4,0) {$\vdots$};
\node[right] at (3.2,0) {$\vdots$};
\node[right] at (2.4,-1) {$\mathcal{L}_{c_{d}}$};
\draw[blue] (3.4,-1) to [out = 0, in = 0] (2.8,2.05);
\draw[blue] (3.1,-0.8) to (3.4,-1);
\draw[blue] (3.1,-1) to (3.4,-1);
\draw[blue] (3.1,-1.2) to (3.4,-1);
\draw[blue] (3.4,1) to [out = 0, in = 0] (2.8,2);
\draw[blue] (3.1,0.8) to (3.4,1);
\draw[blue] (3.1,1) to (3.4,1);
\draw[blue] (3.1,1.2) to (3.4,1);
\node[left] at (3.4,1.5){$I_{c_1}$};
\node[right] at (4,0.5) {$I_{c_d}$};
\end{tikzpicture}
\end{center}

 The node $y$ can either connect via an odd connection to $y^+$ or $\mathcal{L}_{c_0} \new{\sqcup \cdots \sqcup} \mathcal{L}_{c_d}$. In this case, we can decompose the hyper $T$-path into $T_y^{c_0}$ and a hyper $T$-path associated to $S - \{y\}$. Unlike in the previous case, the boundary node $y$ in the hyper $T$-path associated to $S - \{y\}$ does not necessarily connect to $y^+$ by an odd connection:
\begin{center}
\begin{tikzpicture}[scale = 1.5]
\node[left] at (0,2) {$c_0$};
\node[left] at (0,1) {$c_{1}$};
\node[left] at (0,0) {$\vdots$};
\node[left] at (0,-1) {$c_{d}$};
\draw[blue] (0,2) to (0.7,2);
\draw[blue] (0,2.05) to (1,2.2);
\draw[blue] (0.7,2) to (1,2.1);
\draw[blue] (0.7,2) to (1,1.9);
\draw[blue] (0,1.95) to (1,1.8);
\node[above] at (0.5,2.1) {$\prod_{w \in \Gamma_{\lessdot c_0}^v} I_{w}$};
\draw[blue] (0,1) to (1,1);
\draw[blue] (0,-1) to (1,-1);
\node[right] at (1,2) {$\mathcal{L}_{c_0}$};
\node[right] at (1,1) {$y$};
\node[right] at (1,0) {$\vdots$};
\node[right] at (1,-1) {$y$};
\draw[red,densely dashed] (1.4,2.2) to (1.7,2);
\draw[red,densely dashed] (1.4,2) to (1.7,2);
\draw[red,densely dashed] (1.4,1.8) to (1.7,2);
\draw[red,densely dashed] (1.7,2) to (2.4,2);
\node[above] at (1.9,2) {$I_{c_0}$};
\draw[red,densely dashed] (1.4,1) to (2.1,1);
\draw[red,densely dashed] (2.1,1) to (2.4,0.8);
\draw[red,densely dashed] (2.1,1) to (2.4,1);
\draw[red,densely dashed] (2.1,1) to (2.4,1.2);
\node[above] at (1.9,1) {$I_{c_{1}}$};
\draw[red,densely dashed] (1.4,-1) to (2.1,-1);
\draw[red,densely dashed] (2.1,-1) to (2.4,-0.8);
\draw[red,densely dashed] (2.1,-1) to (2.4,-1);
\draw[red,densely dashed] (2.1,-1) to (2.4,-1.2);
\node[above] at (1.9,-1) {$I_{c_{d}}$};
\node[right] at (2.4,2) {$y$};
\node[right] at (2.4,1) {$\mathcal{L}_{c_{1}}$};
\node[right] at (2.4,0) {$\vdots$};
\node[right] at (3.2,0) {$\vdots$};
\node[right] at (2.4,-1) {$\mathcal{L}_{c_{d}}$};
\draw[blue] (3.4,-1) to [out = 0, in = 0] (2.8,2.05);
\draw[blue] (3.1,-0.8) to (3.4,-1);
\draw[blue] (3.1,-1) to (3.4,-1);
\draw[blue] (3.1,-1.2) to (3.4,-1);
\draw[blue] (3.4,1) to [out = 0, in = 0] (2.8,2);
\draw[blue] (3.1,0.8) to (3.4,1);
\draw[blue] (3.1,1) to (3.4,1);
\draw[blue] (3.1,1.2) to (3.4,1);
\draw[blue] (2.8,2.1) to (3.8,2.1);
\node[right] at (3.8,2.15) {$y^+$};
\node[left] at (3.4,1.5){$I_{c_1}$};
\node[right] at (4,0.5) {$I_{c_d}$};
\node[] at (5,0.5){$\oplus$};
\node[left] at (6,0.5) {$y$};
\draw[blue] (6,0.5) -- (7,0.5);
\node[] at (7.3,0.5) {$\cdots$};
\end{tikzpicture}
\end{center}

The final case is when more than one of the boundary nodes $c_i$ are connected via odd connections to $\mathcal{L}_{c_i}$. If $y$ only covers one element, this case is not possible. Without loss of generality, label the elements covered by $y$ so that $c_0,\ldots,c_k$ are connected $\mathcal{L}_{c_0}, \ldots, \mathcal{L}_{c_k}$ respectively for $0 < k \leq d$ and $c_{k+1},\ldots,c_d$ are connected to $y$:
\begin{center}
\begin{tikzpicture}[scale = 1.5]
\node[left] at (0,1.5) {$c_0$};
\node[left] at (0,1) {$\vdots$};
\node[left] at (0,0) {$c_k$};
\node[left] at (0,-1) {$c_{k+1}$};
\node[left] at (0,-1.5) {$\vdots$};
\node[left] at (0,-2) {$c_d$};
\draw[blue] (0,1.5) to (0.7,1.5);
\draw[blue] (0,1.55) to (1,1.7);
\draw[blue] (0.7,1.5) to (1,1.6);
\draw[blue] (0.7,1.5) to (1,1.4);
\draw[blue] (0,1.45) to (1,1.3);
\node[above] at (0.5,1.6) {$\prod_{w \in \Gamma_{\lessdot c_0}^v} I_{w}$};
\draw[blue] (0,0) to (0.7,0);
\draw[blue] (0,0.05) to (1,0.2);
\draw[blue] (0.7,0) to (1,0.1);
\draw[blue] (0.7,0) to (1,-.1);
\draw[blue] (0,-0.05) to (1,-.2);
\node[above] at (0.5,0.1) {$\prod_{w \in \Gamma_{\lessdot c_k}^v} I_{w}$};
\draw[blue] (0,-1) to (1,-1);
\draw[blue] (0,-2) to (1,-2);
\node[right] at (1,1.5) {$\mathcal{L}_{c_0}$};
\node[right] at (1,1) {$\vdots$};
\node[right] at (1,0) {$\mathcal{L}_{c_k}$};
\node[right] at (1,-1) {$y$};
\node[right] at (1,-1.5) {$\vdots$};
\node[right] at (1,-2) {$y$};
\draw[red,densely dashed] (1.4,1.7) to (1.7,1.5);
\draw[red,densely dashed] (1.4,1.5) to (1.7,1.5);
\draw[red,densely dashed] (1.4,1.3) to (1.7,1.5);
\draw[red,densely dashed] (1.7,1.5) to (2.4,1.5);
\node[above] at (1.9,1.5) {$I_{c_0}$};
\draw[red,densely dashed] (1.4,0.2) to (1.7,0);
\draw[red,densely dashed] (1.4,0) to (1.7,0);
\draw[red,densely dashed] (1.4,-0.2) to (1.7,0);
\draw[red,densely dashed] (1.7,0) to (2.4,0);
\node[above] at (1.9,0) {$I_{c_k}$};
\draw[red,densely dashed] (1.4,-1) to (2.1,-1);
\draw[red,densely dashed] (2.1,-1) to (2.4,-0.8);
\draw[red,densely dashed] (2.1,-1) to (2.4,-1);
\draw[red,densely dashed] (2.1,-1) to (2.4,-1.2);
\node[above] at (1.9,-1) {$I_{c_{k+1}}$};
\draw[red,densely dashed] (1.4,-2) to (2.1,-2);
\draw[red,densely dashed] (2.1,-2) to (2.4,-1.8);
\draw[red,densely dashed] (2.1,-2) to (2.4,-2);
\draw[red,densely dashed] (2.1,-2) to (2.4,-2.2);
\node[above] at (1.9,-2) {$I_{c_{d}}$};
\node[right] at (2.4,1.5) {$y$};
\node[right] at (2.4,1) {$\vdots$};
\node[right] at (2.4,0) {$y$};
\node[right] at (2.4,-1) {$\mathcal{L}_{c_{k+1}}$};
\node[right] at (2.4,-1.5) {$\vdots$};
\node[right] at (2.4,-2) {$\mathcal{L}_{c_{d}}$};
\end{tikzpicture}
\end{center}

Since the hyper $T$-path is connected, there must be a path between the boundary nodes~$c_i$ and $c_j$ for $i \neq j$; by Rule~(9), this path can only pass through nodes labeled by $y$ and vertices in~$\mathcal{L}_{y}$. Each node in $\mathcal{L}_{c_i}$ can only be incident to one odd connection, so connecting these nodes to one of the nodes labeled $y$ would produce a disconnected hyper $T$-path. There is no way for all of these $d+1$ branches to come together in the next odd connection without producing extra branches. As discussed in the previous case, connecting the nodes in $\mathcal{L}_{c_{k+1}} \new{\sqcup \cdots \sqcup} \mathcal{L}_{c_d}$ would create paths between boundary nodes which violate Lemma~\ref{lem:NoExtendedVertexEvenStep}. Hence, this case does not produce any valid hyper $T$-paths.

We have therefore shown that all valid hyper $T$-paths associated to $S$ can be constructed via pasting in the desired manner.
\end{proof}

\begin{Theorem}
The hyper $T$-paths for a connected set $S$ are exactly obtained by pasting hyper $T$-paths for each element of $S$ together.
\end{Theorem}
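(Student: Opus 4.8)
The plan is to deduce this statement from Theorem~\ref{thm:CanPeelSingleton} (for the claim that every hyper $T$-path for $S$ arises by pasting) together with Proposition~\ref{prop:pasting} (for the converse), proceeding by induction on $|S|$. A preliminary observation makes the induction go through: if $y$ is a minimal element of a connected set $S$ with $|S|>1$, then since $\Gamma$ is a tree rooted at $v$ the only neighbors of $y$ in $\Gamma$ are $y^+$ and the vertices it covers; as $y$ is minimal in $S$, none of the latter lie in $S$, so $y$ is a leaf of the subtree induced by $S$, $y^+\in S$, and $S\setminus\{y\}$ is again a connected set. Thus we may always peel off a minimal element and stay within the hypotheses of the theorem.

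For the direction ``every hyper $T$-path for $S$ is a pasting,'' the base case $|S|=1$ is Theorem~\ref{thm:TPathOneVertex}, under which each hyper $T$-path for a singleton is (trivially) the pasting of its one singleton piece. For $|S|>1$, fix a minimal element $y\in S$. By Theorem~\ref{thm:CanPeelSingleton}, an arbitrary hyper $T$-path $\alpha$ for $S$ decomposes as $T_y^{*}\oplus\beta$, where $T_y^{*}$ is one of the singleton hyper $T$-paths for $\{y\}$ classified in Theorem~\ref{thm:TPathOneVertex} and $\beta$ is a hyper $T$-path for the connected set $S\setminus\{y\}$. By the inductive hypothesis $\beta$ is obtained by pasting hyper $T$-paths for the elements of $S\setminus\{y\}$, and prepending the factor $T_y^{*}$ exhibits $\alpha$ as a pasting of hyper $T$-paths for all elements of $S$.

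For the converse, ``every (well-defined) such pasting is a hyper $T$-path for $S$,'' again induct on $|S|$, building $S$ up along a vertex order $s_1,\dots,s_k$ in which every prefix is connected (such an order exists because $S$ induces a subtree: repeatedly delete a minimal element and reverse). Given singleton hyper $T$-paths $T_{s_1},\dots,T_{s_k}$ that can be successively pasted, set $A=\{s_1,\dots,s_{k-1}\}$; by induction $T_{s_1}\oplus\cdots\oplus T_{s_{k-1}}$ is a hyper $T$-path for the connected set $A$. Since $\Gamma$ is a tree, $s_k$ is adjacent to exactly one vertex $u\in A$, and the odd/even structure forced by Rules~(7)--(8) at the relevant boundary node guarantees that the compatibility hypothesis of Proposition~\ref{prop:pasting} holds; that proposition then shows the final pasting with $T_{s_k}$ is a hyper $T$-path for $A\cup\{s_k\}=S$. (As the examples in Section~\ref{sec:gen-sets} show, not every tuple of singleton hyper $T$-paths admits a valid pasting — e.g.\ $T_2^{+}$ with $T_3^{2}$ — so ``obtained by pasting'' is understood to range over those assignments for which the successive pasting operations are defined.)

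The main obstacle I anticipate is not conceptual depth but bookkeeping around the pasting operation: one must check that the iterated pasting is insensitive to the order in which the pieces are combined — so that ``pasting hyper $T$-paths for each element of $S$ together'' is unambiguous — which follows from applying Theorem~\ref{thm:CanPeelSingleton} with any minimal element and tracking that the factor removed at each stage is uniquely determined by $\alpha$ near that boundary node. One should also dispatch, as a small separate case exactly as in Theorem~\ref{thm:TPathOneVertex}, the situation where the root $v$ belongs to $S$ and $x^{+}$ fails to exist, since then the singleton piece $T_v^{*}$ and the pasting at $v$ take the slightly modified form described there.
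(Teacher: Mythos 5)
Your proposal is correct and follows essentially the same route as the paper, which proves this theorem in two sentences by citing Proposition~\ref{prop:pasting} for the direction that pastings yield hyper $T$-paths and Theorem~\ref{thm:CanPeelSingleton} for the direction that every hyper $T$-path arises this way. Your version simply makes explicit the induction and the connectivity of $S\setminus\{y\}$ that the paper leaves implicit.
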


\begin{proof}
By Proposition~\ref{prop:pasting}, pasting hyper $T$-paths for each element of $S$ together will always give us a hyper $T$-path for $S$. We also know by Theorem~\ref{thm:CanPeelSingleton} that all hyper $T$-paths for $S$ are formed this way.
\end{proof}

\subsection{Proof of Theorem~\ref{thm:main2}}\label{sec:pf-thm-2}

We begin by proving Theorem~\ref{thm:main2} when $|S|=1$.

\begin{Lemma}
Let $\Gamma$ be a tree and $\calC_v$ a rooted cluster for $\Gamma$. Then the cluster variable $Y_{\{i\}}$ has the combinatorial formula \[ Y_{\{i\}}=\sum_{\substack{\text{complete hyper}\\T\text{-paths }\gamma\text{ for }\{i\}}}\wt(\gamma).\]
\end{Lemma}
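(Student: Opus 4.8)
The plan is to enumerate the hyper $T$-paths for $\{i\}$ using Theorem~\ref{thm:TPathOneVertex}, compute the weight of each one, and check that these weights sum to the closed formula for $Y_{\{i\}}$ from Proposition~\ref{prop:Y-single-formula}. First I would dispose of the degenerate case $\{i\}\in\mathcal{C}_v$, equivalently $\Gamma_{\lessdot i}^v=\varnothing$: here the only hyper $T$-path is a single odd connection labeled $I_i=\{i\}$, whose weight is $Y_{I_i}=Y_{\{i\}}$, which agrees with Proposition~\ref{prop:Y-single-formula} (reading there as $Y_{\{i\}}=Y_{\{i\}}$).

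For the remaining case write $\Gamma_{\lessdot i}^v=\{c_0,\dots,c_d\}$ with $d\ge 0$. By Theorem~\ref{thm:TPathOneVertex} the hyper $T$-paths for $\{i\}$ are exactly $T_i^+$ together with $T_i^{c_j}$ for $j=0,\dots,d$, so the sum in the statement has $d+2$ terms. The heart of the argument is to read off the weights of these explicit diagrams from the pictures in Section~\ref{sec:single-sets}: in each of them every odd connection carries either an ordinary edge of $\Gamma'$ (weight $1$) or a hyperedge $I_w$, and the same is true of the even connections, so each weight is a monomial ratio of $Y_{I_w}$'s. Using Lemma~\ref{lem:in-cluster} to identify $\prod_{u\in\Gamma_{\lessdot x}^v}Y_{I_u}=Y_{\Gamma_{<x}^v}$ and $Y_{\Gamma_{<i}^v\setminus\{c_j\}}=Y_{\Gamma_{<c_j}^v}\prod_{k\ne j}Y_{I_{c_k}}$, I expect to obtain
\[ \wt(T_i^+)=\frac{Y_{I_i}}{Y_{\Gamma_{<i}^v}}, \qquad \wt(T_i^{c_j})=\frac{Y_{\Gamma_{<i}^v\setminus\{c_j\}}}{Y_{\Gamma_{<i}^v}}. \]
Adding these over the $d+2$ hyper $T$-paths yields
\[ \sum_{\gamma}\wt(\gamma)=\frac{Y_{I_i}+\sum_{u\in\Gamma_{\lessdot i}^v}Y_{\Gamma_{<i}^v\setminus\{u\}}}{Y_{\Gamma_{<i}^v}}=Y_{\{i\}}, \]
the last equality being Proposition~\ref{prop:Y-single-formula}.

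The main obstacle is the careful bookkeeping needed for the weight identities: one must go connection by connection through $T_i^+$ and $T_i^{c_j}$, distinguishing the weight-$1$ edges of $\Gamma'$ from the hyperedges $I_w$, and one must treat the degenerate sub-cases built into the construction in Section~\ref{sec:single-sets} (when $i=v$, so that $i^+$ is an extended vertex or is absent; when a child $c_j$ is a leaf of $\Gamma$, so that the odd connection out of $c_j$ is the single edge $(c_j,c_j')$ rather than the fan $\bigcup_{w\in\Gamma_{\lessdot c_j}^v}I_w$), verifying in each such sub-case that the modified or deleted pieces contribute only factors of $1$ and hence leave the two displayed weight identities intact. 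Once those identities are established, the final two-line computation above completes the proof.
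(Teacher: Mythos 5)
Your proposal is correct and follows essentially the same route as the paper: enumerate the hyper $T$-paths for $\{i\}$ via Theorem~\ref{thm:TPathOneVertex}, compute $\wt\big(T_i^+\big)=Y_{I_i}/Y_{\Gamma_{<i}^v}$ and $\wt\big(T_i^{c_j}\big)=Y_{\Gamma_{<i}^v\setminus\{c_j\}}/Y_{\Gamma_{<i}^v}$, and match the sum against Proposition~\ref{prop:Y-single-formula}. The weight identities you anticipate are exactly the ones the paper uses, so the extra bookkeeping you flag for the degenerate sub-cases is the only remaining (routine) work.
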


\begin{proof}
From Theorem \ref{thm:TPathOneVertex}, the hyper $T$-paths for $\{i\}$ are $T_i^{+}$ and $T_i^x$ for each $x\in\Gamma_{\lessdot i}^v$. The hyper $T$-path $T_i^{+}$ has weight \[ \frac{Y_{I_{i}}}{\prod_{w\in\Gamma_{\lessdot i}^v} Y_{I_{w}}}=\frac{Y_{I_{i}}}{Y_{\Gamma_{< i}^v}}.\]
The hyper $T$-path $T_i^x$ has weight
\[ \frac{\prod_{w \in \Gamma_{\lessdot x}^v} Y_{I_w}}{Y_{I_{x}}} = \frac{\Big(\prod_{w \in \Gamma_{\lessdot x}^v} Y_{I_w}\Big)\Big(\prod_{w\in\Gamma_{\lessdot i}^v\setminus\{x\}} Y_{I_{w}}\Big)}{\prod_{w\in\Gamma_{\lessdot i}^v} Y_{I_{w}}}= \frac{Y_{\Gamma_{< i}^v\setminus\{x\}} }{Y_{\Gamma_{< i}^v}}.\]
Added together, this is the same as the formula for $Y_{\{i\}}$ from Proposition~\ref{prop:Y-single-formula}.
\end{proof}

We can now prove our the theorem for general sets.

\begin{proof}[Proof of Theorem~\ref{thm:main2}]
To get a hyper $T$-path for $S$, we choose a hyper $T$-path for each $x\in S$ such that we are able to paste them all together. Let $O\subset S$ be the set of elements $x$ where we choose $T_x^{+}$. Notice that if $x$ is a minimal element of $\Gamma$, we must choose $T_x^{+}$ and therefore~$x$ must be in $O$. Define $u\colon S\setminus O\to V(\Gamma)$ so that we choose $T_x^{u(x)}$ for each $x\in S\setminus O$. Since~$T_x^{u(x)}$ does not have an connection joining nodes $x$ and $u(x)$, if $u(x)\in S$ we must have chosen a~hyper $T$-path for $u(x)$ that has such a connection. That means we must not have chosen $T_{u(x)}^+$. Equivalently, $u(x)$ must not be in $O$.

Thus, summing over all possible choices of hyper $T$-paths for the elements of $S$, we find
\begin{align*}
\sum_{\substack{\text{complete hyper}\\T\text{-paths }\gamma\text{ for }S}}\wt(\gamma)&=\sum_{\substack{O\subseteq S\text{ containing all}\\ \text{minimal elements of }\Gamma\text{ in }S}} \, \sum_{\substack{u\colon S\setminus O\to V(\Gamma)\\ u(x)\in\Gamma_{\lessdot x}^v\setminus O}}\bigg(\prod_{x\in O}\wt(T_x^{+})\bigg)\bigg(\prod_{x\in S\setminus O}\wt(T_x^{u(x)})\bigg)\\
&=\sum_{\substack{O\subseteq S\text{ containing all}\\ \text{minimal elements of }\Gamma\text{ in }S}} \, \sum_{\substack{u\colon S\setminus O\to V(\Gamma)\\ u(x)\in\Gamma_{\lessdot x}^v\setminus O}}\bigg(\prod_{x\in O}\frac{Y_{I_{x}}}{Y_{\Gamma_{< x}^v}}\bigg)\bigg(\prod_{x\in S\setminus O}\frac{Y_{\Gamma_{< x}^v\setminus\{u(x)\}} }{Y_{\Gamma_{< x}^v}}\bigg).
\end{align*}

By Theorem~\ref{thm:Y-formula}, this is $Y_S$.
\end{proof}

\section{Future directions}\label{sec:future}
\subsection{From rooted clusters to other clusters}

We would like to extend our results to other clusters for trees.

Our definition of rooted clusters comes from the algebraic formulas for the exchange relations. That is, rooted clusters are exactly the ones where the formulas in Proposition~\ref{prop:exchange-rels} give expansions in terms of the cluster variables. Proving formulas algebraically for other types of clusters will likely require an inductive argument. This induction seems to be easiest in star graphs because of their symmetry.

\begin{Conjecture}
Let $S_n$ denote the star graph on $n$ vertices whose central vertex is labeled by $1$. Let $\mathcal{C}$ be the cluster $\{ \{3 \}, \{ 4 \}, \dots, \{ n \}, \{ 1, 3, 4, \dots n \} \}$. For any vertex subset $S$ such that $1 \in S$, we conjecture that
\begin{align*}
 Y_S = \begin{cases}
 \displaystyle\frac{\prod\limits_{i \in S \backslash \{ 1, 2 \}} Y_i^2}{Y_{[n] \backslash \{ 1, 2 \}}} \bigg( \sum\limits_{i \not\in S} \prod\limits_{j \neq 1, 2, i} Y_j \bigg) + \frac{Y_{[n]}}{\prod\limits_{i \in [n] \backslash S} Y_i} + \frac{Y_{[n]}}{Y_{[n] \backslash \{ 2 \}}} \bigg( \sum\limits_{i \in [n] \backslash S} \frac{1}{Y_i} \bigg) & \textrm{if } 2 \not\in S, \vspace{2mm}\\
 \displaystyle \frac{Y_{[n] \backslash \{ 2 \}}}{\prod\limits_{i \in [n] \backslash (S \cup \{ 2 \})} Y_i} + \bigg(\prod\limits_{i \in S \backslash \{ 1 \}} Y_i \bigg) \bigg( \sum\limits_{j \in [n] \backslash (S \cup \{ 2 \})} \frac{1}{Y_i} \bigg) & \textrm{if } 2 \in S.
 \end{cases}
\end{align*}
If $1 \not\in S$, then $S$ either consists of a set of disconnected leaves or a single leaf. Because $\{ i \} \in \mathcal{C}$ for all $i \neq 2$, we then have
\begin{align*}
 Y_s = \begin{cases}
 Y_{i} & \textrm{if }S = \{ i \}, \\
 \prod\limits_{i \in S} Y_i & \textrm{else}.
 \end{cases}
\end{align*}
\end{Conjecture}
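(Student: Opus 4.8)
The plan is to exploit that $\mathcal{C}$ sits a single mutation away from a rooted cluster. Root $S_n$ at the leaf $2$: the rooted cluster is $\mathcal{C}_2=\{Y_{[n]},\,Y_{[n]\setminus\{2\}},\,Y_{\{3\}},\dots,Y_{\{n\}}\}$, since $I_2=[n]$, $I_1=[n]\setminus\{2\}$, and $I_i=\{i\}$ for $i\geq 3$. Applying Proposition~\ref{prop:exchange-rels}(a) with $i=2$ and $S=[n]\setminus\{2\}$ — and using that in a star every path out of $2$ must first pass through $1$ — gives the single exchange relation
\[
X_2\,Y_{[n]}=A_1\prod_{i=3}^{n}Y_{\{i\}}\;+\;A_2\,Y_{[n]\setminus\{2\}}\;+\;\sum_{k=3}^{n}A_k\prod_{\substack{3\leq i\leq n\\ i\neq k}}Y_{\{i\}}.
\]
Hence mutating $\mathcal{C}_2$ in the direction of $Y_{[n]}$ replaces it by $X_2$ and produces exactly $\mathcal{C}$, and the displayed identity expresses $Y_{[n]}$ as a Laurent polynomial in $\mathcal{C}$ with coefficients in $\mathbb{Z}_{\geq 0}[A_1,\dots,A_n]$.

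First I would specialize Theorem~\ref{thm:Y-formula} (and Proposition~\ref{prop:X-formula-2}) to $S_n$ rooted at $2$. Because the vertex poset of a star has only three levels ($2$ above $1$ above the leaves $3,\dots,n$), the double sum in Theorem~\ref{thm:Y-formula} collapses: the outer index $O$ is essentially a choice of subset of $\{1,2\}\cap S$, and the inner index $u$ is a choice of a single covered leaf for $1$ (together with $u(2)=1$ when relevant). The key structural observation is then that $Y_{[n]}=Y_{I_2}$ never appears in a denominator of these $\mathcal{C}_2$-expansions: the denominator $\prod_{x\in S}Y_{\Gamma^{v}_{<x}}$ only involves $Y_{[n]\setminus\{2\}}$ and the $Y_{\{i\}}$, while $Y_{[n]}$ enters solely through the numerator factor $Y_{I_2}$ with exponent $0$ or $1$. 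Consequently each $\mathcal{C}_2$-expansion of $Y_S$ is a polynomial of degree $\leq 1$ in $Y_{[n]}$ with positive Laurent coefficients in the other variables, so substituting the exchange relation above makes every expansion manifestly a positive Laurent polynomial in $\mathcal{C}$. Carrying out this substitution, splitting into the cases $2\in S$ and $2\notin S$, and exploiting the symmetry of the star in its leaves, should collect into the closed forms in the statement — with the understanding that the residual occurrences of $Y_{[n]}$ and $Y_{\{2\}}=(A_2+X_1)/X_2$ there stand for their further $\mathcal{C}$-expansions.

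An alternative, more self-contained route, which I would run in parallel as a check, is induction on $|S|$ via Proposition~\ref{prop:exchange-rels}(b). For $S=\{1\}\cup L$ with $L\subseteq\{2,\dots,n\}$ and two leaves $i,j\notin L$, that proposition specializes (again because the unique path between two leaves passes through $1$) to
\[
Y_{S\cup\{i\}}\,Y_{S\cup\{j\}}=Y_{S\cup\{i,j\}}\,Y_S+\Bigl(\prod_{\ell\in L}Y_{\{\ell\}}\Bigr)^{2},
\]
which builds every $Y_{\{1\}\cup L}$ from the base data $Y_{\{1\}}$ and the $Y_{\{1,i\}}$; the connected sets not containing $1$ are singletons, handled directly by Proposition~\ref{prop:Y-single-formula}. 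It then remains only to verify that the conjectured closed forms satisfy this quadratic recursion and agree on the base cases.

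The hard part will not be conceptual but combinatorial bookkeeping: matching the output of Theorem~\ref{thm:Y-formula} against the explicit closed forms after the substitution $Y_{[n]}\mapsto(\cdots)/X_2$ requires careful control of the products over $[n]\setminus S$, of the $2\in S$ versus $2\notin S$ dichotomy, and of degenerate small cases — most notably $n=3$, where $S_n$ is the $A_2$ path and several of the products are empty. It is plausible that the precise statement of the conjecture needs minor corrections to index ranges, or to which quantities are re-expanded, before the identity holds literally; the positivity assertion itself, however, follows cleanly from the ``$Y_{[n]}$ appears only in numerators'' observation together with the single exchange relation displayed above.
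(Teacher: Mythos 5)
There is nothing in the paper to compare your argument against: this statement appears in Section~\ref{sec:future} as a \emph{Conjecture}, supported only by computational data, and the authors offer no proof. So the only question is whether your proposal would itself constitute a proof. Your structural observations are correct and genuinely useful: rooting the star at the leaf $2$ does give $\calC_2=\{Y_{[n]},Y_{[n]\setminus\{2\}},Y_{\{3\}},\dots,Y_{\{n\}}\}$; the cluster $\calC$ of the conjecture is obtained from $\calC_2$ by the single mutation exchanging $Y_{[n]}$ for $X_2$; your displayed exchange relation is exactly Lemma~\ref{lem:X-formula-1} with $i=v=2$; and in the $\calC_2$-expansion of any $Y_S$ from Theorem~\ref{thm:Y-formula} the variable $Y_{[n]}=Y_{I_2}$ indeed occurs only through the numerator factor $\prod_{x\in O}Y_{I_x}$, hence with exponent $0$ or $1$ and never in the denominator $\prod_{x\in S}Y_{\Gamma^v_{<x}}$. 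Substituting the exchange relation therefore does prove \emph{positivity} of every $Y_S$ with respect to $\calC$ -- a worthwhile observation that goes beyond what the paper establishes (note, though, that this trick does not extend to the $X$-variables, since Proposition~\ref{prop:X-formula-2} puts $Y_{I_2}=Y_{[n]}$ in the denominator of $X_1$ and hence of $Y_{\{2\}}$).

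The gap is that positivity is not what the conjecture asserts: it asserts two specific closed-form identities, and your proposal explicitly defers their verification ("should collect into the closed forms", "plausible that the precise statement needs minor corrections"). That verification is the entire content here, and it is not routine bookkeeping. If you actually specialize Theorem~\ref{thm:Y-formula} to the star rooted at $2$, the case $1\in S$, $2\notin S$ has only two admissible choices of $O$ ($S$ and $S\setminus\{1\}$) and produces $Y_{[n]\setminus\{2\}}/\prod_{i\in[n]\setminus(S\cup\{2\})}Y_i+\big(\prod_{i\in S\setminus\{1\}}Y_i\big)\sum_{j\in[n]\setminus(S\cup\{2\})}Y_j^{-1}$ -- which is the conjecture's \emph{$2\in S$} formula, not its $2\notin S$ formula -- while the case $1,2\in S$ produces three terms that match the conjecture's $2\notin S$ display only up to factors of $\prod_{i\in S\setminus\{1,2\}}Y_i$ and $Y_{[n]\setminus\{2\}}$. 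So either the conjecture's two cases are mislabeled or its formulas contain errors, and a proof must first decide what the correct statement is (including what the stray occurrences of $Y_{[n]}$ and $Y_{\{2\}}$, neither of which lies in $\calC$, are meant to denote). Your second route via Proposition~\ref{prop:exchange-rels}(b) is set up correctly -- the quadratic recursion $Y_{S\cup\{i\}}Y_{S\cup\{j\}}=Y_{S\cup\{i,j\}}Y_S+\big(\prod_{\ell\in L}Y_{\{\ell\}}\big)^2$ is right -- but it is likewise left unexecuted. As it stands the proposal is a sound plan for the positivity claim and a reasonable diagnostic framework for the identities, but not a proof of the conjectured formulas.
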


We are hopeful that we can extend our hyper $T$-path expansion formula to other types of clusters. For a~type~$A$ cluster algebra, $T$-paths can be used to find expansions for cluster variables in terms of any cluster. Because the definitions of $T$-paths and hyper $T$-paths for path graphs are similar, this suggests that it might be possible to use hyper $T$-paths for other clusters when~$\Gamma$ is an arbitrary tree.

Unfortunately, our current hyper $T$-path construction does not work for arbitrary clusters. One immediate problem is that Rules (7) and (8) would need to be rewritten to allow the even edges to be labelled by any set in the cluster that is incompatible with $S$. However, that change still would not be sufficient because we still wouldn't have ``enough'' valid hyper $T$-paths. For example, suppose $\Gamma$ is the graph from Figure~\ref{fig:example_hypergraph} and $\mathcal{C}$ is the cluster $\{Y_5,y_{25},Y_{125}Y_{1235},Y_{12345}\}$. Then the term $\frac{Y_5^2}{Y_{125}}$ appears in the expansion of $Y_{235}$, but we have been unable to find a hyper $T$-path with that weight. Further, the expansions of some cluster variables with respect to certain clusters contain monomials with squared terms in the denominator. This is not possible with our current definition, as we see from Lemma~\ref{lem:evenconnections}. Thus, it is clear that there is some other substantial change required for us to be able to extend our construction to other clusters.

\subsection{Snake graphs}
In \cite{MSW}, Musiker, Schiffler and Williams provided an alternative combinatorial formula for type $A$ cluster algebras using perfect matchings (also known as dimer models) on certain \emph{snake graphs}.

For an arc $\gamma$ in a triangulation $T$, the snake graph $G_{\gamma, T}$ is built up from quadrilateral tiles whose diagonals correspond to arcs of $T$ that intersect with $\gamma$. We illustrate the definition via the following example and refer to~\cite{MSW} for a complete exposition on snake graphs.

The snake graph corresponding to the arc $M_{i,j}$ in the triangulation $T$ from Example~\ref{ex:cluster_t_path} is as follows:
\begin{center}
\begin{tikzpicture}[scale=0.85]
	\node [style=none] (0) at (-6, 4) {};
		\node [style=none] (1) at (-6, 2) {};
		\node [style=none] (2) at (-4, 2) {};
		\node [style=none] (3) at (-4, 4) {};
		\node [style=label] (4) at (-6, 3) {$T_{10}$};
		\node [style=label] (5) at (-5, 2) {$T_9$};
		\node [style=label] (6) at (-4, 3) {$T_4$};
		\node [style=label] (7) at (-5, 4) {$T_3$};
		\node [style=none] (8) at (-2, 2) {};
		\node [style=none] (9) at (-2, 4) {};
		\node [style=label] (10) at (-3, 4) {$T_1$};
		\node [style=label] (11) at (-3, 2) {$T_5$};
		\node [style=label] (12) at (-2, 3) {$T_2$};
		\node [style=none] (13) at (0, 4) {};
		\node [style=none] (14) at (0, 2) {};
		\node [style=label] (15) at (-1, 4) {$T_{13}$};
		\node [style=label] (16) at (-1, 2) {$T_3$};
		\node [style=label] (17) at (0, 3) {$T_6$};
		\draw (0.center) to (4);
		\draw (4) to (1.center);
		\draw (1.center) to (5);
		\draw (5) to (2.center);
		\draw (2.center) to (6);
		\draw (3.center) to (6);
		\draw (3.center) to (7);
		\draw (7) to (0.center);
		\draw (2.center) to (11);
		\draw (11) to (8.center);
		\draw (12) to (8.center);
		\draw (12) to (9.center);
		\draw (9.center) to (10);
		\draw (10) to (3.center);
		\draw (9.center) to (15);
		\draw (15) to (13.center);
		\draw (13.center) to (17);
		\draw (17) to (14.center);
		\draw (14.center) to (16);
		\draw (16) to (8.center);
\end{tikzpicture}
\end{center}

A \emph{perfect matching} of a graph is a collection $M$ of its edges such that every vertex in the graph is incident to exactly one edge in $M$. The \emph{weight} of a perfect matching is the product of all of the cluster variables associated to edges in $M$. For example, the set of edges with labels $T_3$, $T_3$, $T_9$, $T_{13}$ is a perfect matching with weight $x_{3}^2x_{9}x_{13}$ for the previous snake graph:
\begin{center}
\begin{tikzpicture}[scale=0.85]
 \node [style=none] (0) at (-6, 4) {};
		\node [style=none] (1) at (-6, 2) {};
		\node [style=none] (2) at (-4, 2) {};
		\node [style=none] (3) at (-4, 4) {};
		\node [style=label] (4) at (-6, 3) {$T_{10}$};
		\node [style=label] (5) at (-5, 1.5) {$T_9$};
		\node [style=label] (6) at (-4, 3) {$T_4$};
		\node [style=label] (7) at (-5, 4.5) {$T_3$};
		\node [style=none] (8) at (-2, 2) {};
		\node [style=none] (9) at (-2, 4) {};
		\node [style=label] (10) at (-3, 4) {$T_1$};
		\node [style=label] (11) at (-3, 2) {$T_5$};
		\node [style=label] (12) at (-2, 3) {$T_2$};
		\node [style=none] (13) at (0, 4) {};
		\node [style=none] (14) at (0, 2) {};
		\node [style=label] (15) at (-1, 4.5) {$T_{13}$};
		\node [style=label] (16) at (-1, 1.5) {$T_3$};
		\node [style=label] (17) at (0, 3) {$T_6$};
		\draw (0.center) to (4);
		\draw (4) to (1.center);
		\draw (2.center) to (6);
		\draw (3.center) to (6);
		\draw (2.center) to (11);
		\draw (11) to (8.center);
		\draw (12) to (8.center);
		\draw (12) to (9.center);
		\draw (9.center) to (10);
		\draw (10) to (3.center);
		\draw (13.center) to (17);
		\draw (17) to (14.center);
		\draw [style=blue thick] (9.center) to (13.center);
		\draw [style=blue thick] (8.center) to (14.center);
		\draw [style=blue thick] (0.center) to (3.center);
		\draw [style=blue thick] (1.center) to (2.center);
\end{tikzpicture}
\end{center}

The following theorem of \cite{MSW} gives an explicit combinatorial formula for the cluster variable associated to the arc $\gamma=M_{i,j}$.
\begin{Theorem}[{\cite[Theorem~4.9]{MSW}}]
\label{thm:snake_graph}
The cluster variable $x_\gamma$ of an arc $\gamma$ has a Laurent expansion in terms of the cluster corresponding to $T$ which is given by
	\[x_{\gamma}=
	{1\over\textup{cross}(\gamma)}
	\sum_{\substack{M\text{ is a perfect}\\ \text{matching of }G_{\gamma, T}}} \wt(M),\]
	where $\textup{cross}(\gamma)$ is the weighted product of all diagonals in $T$ which $\gamma$ crosses.
\end{Theorem}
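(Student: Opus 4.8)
The plan is to deduce Theorem~\ref{thm:snake_graph} from Schiffler's $T$-path expansion quoted above, by constructing a weight-preserving bijection between the perfect matchings of the snake graph $G_{\gamma,T}$ and the $T$-paths from $i$ to $j$; it is cleanest to use the \emph{reduced} $T$-paths, which biject naturally with perfect matchings. Since Schiffler's formula $x_{M_{i,j}}=\sum_{\alpha}\wt(\alpha)$ holds with the sum taken over reduced $T$-paths, it then suffices to match that sum against the matching generating function, after accounting for the normalization by $\textup{cross}(\gamma)$ and the convention that frozen (boundary) arcs carry weight $1$.

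First I would set up the correspondence at the level of tiles. The tiles $G_1,\dots,G_d$ of $G_{\gamma,T}$ are in order-preserving bijection with the diagonals $T_{i_1},\dots,T_{i_d}$ crossed by $\gamma$, and the diagonal of $G_k$ is $T_{i_k}$, which is \emph{not} an edge of $G_{\gamma,T}$; the four sides of $G_k$ are the arcs of $T$ bounding the two triangles incident to $T_{i_k}$, and consecutive tiles are glued along a common side, the gluing alternating between the two free corners (the zigzag/sign datum of the snake graph). A reduced $T$-path $\alpha=(t_1,\dots,t_{\ell(\alpha)})$ records, at each crossed diagonal $T_{i_k}$, a binary choice of which of the two incident triangles the path traverses. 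I would show this sequence of side-choices is precisely the data of a perfect matching $M=\Phi(\alpha)$ of $G_{\gamma,T}$: the arcs used by $\alpha$ that are not among the crossed diagonals become the matched edges. Concretely, a matching of the snake graph is determined by an order ideal in the poset on $\{1,\dots,d\}$ recording which tiles are ``flipped'' relative to the minimal matching, and the compatibility condition on $\alpha$ (that consecutive steps share the chosen triangle's vertex) translates exactly into that order-ideal condition, while reducedness of $\alpha$ corresponds to $M$ being a genuine perfect matching with no doubled tile. One must check $\Phi$ is well defined, injective, and surjective, the last by reading a matching tile-by-tile to reconstruct the zigzag of arcs.

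Next I would verify the weights agree. By definition $\wt(\alpha)=\big(\prod_{k}x_{t_{2k-1}}\big)\big(\prod_{k}x_{t_{2k}}\big)^{-1}$, and since $\gamma$ crosses each $T_{i_k}$ exactly once, the denominator $\prod_{k}x_{t_{2k}}=\prod_{k=1}^{d}x_{T_{i_k}}=\textup{cross}(\gamma)$. On the other side $\wt(M)=\prod_{e\in M}x_e$, so the identity $\wt(\alpha)=\wt(M)/\textup{cross}(\gamma)$ reduces to the claim that the multiset of non-diagonal arcs of $\alpha$ equals the edge set of $M=\Phi(\alpha)$, which is exactly how $\Phi$ was built. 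Summing over all reduced $T$-paths and invoking Schiffler's formula then yields $x_\gamma=\sum_\alpha\wt(\alpha)=\frac{1}{\textup{cross}(\gamma)}\sum_M\wt(M)$.

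I expect the main obstacle to be the careful bookkeeping linking the zigzag/sign function of the snake graph with the ``above vs.\ below $\gamma$'' dichotomy for $T$-paths: identifying which side-choice at tile $G_k$ corresponds to which triangle incident to $T_{i_k}$, and confirming that the order-ideal constraint for matchings is equivalent to path-compatibility together with reducedness, requires a local analysis at each pair of consecutive tiles and at the two ends (including the boundary-edge/frozen conventions). An alternative route, avoiding $T$-paths, is a direct induction on $d$: one decomposes $G_{\gamma,T}$ along a grafting edge into $G_{\gamma_1,T}$ and $G_{\gamma_2,T}$, shows the two ways of matching the connecting edges produce the Ptolemy relation $x_{\gamma_1}x_{\gamma_2}=x_\gamma x_{\gamma'}+(\text{frozen contribution})$ for the matching generating functions, checks the base cases $d=0,1$, and concludes by uniqueness of cluster variables from the initial data; this is essentially the original argument of \cite{MSW}, but the bijective route is shorter here because Schiffler's $T$-path formula is already in hand.
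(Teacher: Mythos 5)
The paper does not actually prove this statement: it is quoted from Musiker and Schiffler \cite{MSW} (Theorem 4.9 there), and the only justification the paper offers is the informal remark immediately following the theorem that adding the diagonals of $\textup{cross}(\gamma)$ to a perfect matching of $G_{\gamma,T}$ produces a \emph{complete} $T$-path whose odd steps are the matched edges and whose even steps are the crossed diagonals. Your overall strategy---deducing the matching formula from Schiffler's $T$-path expansion via a weight-preserving bijection---is exactly this correspondence, so the approach is sound in outline.

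There is, however, a genuine internal inconsistency in your write-up: you commit to working with \emph{reduced} $T$-paths, but your weight verification uses property (T3) of \emph{complete} $T$-paths. The identity $\prod_k x_{t_{2k}}=\prod_{k=1}^d x_{T_{i_k}}=\textup{cross}(\gamma)$ holds only for complete $T$-paths, whose even steps are by definition all $d$ crossed diagonals in order; once backtracking is removed, the even steps of a reduced $T$-path form a proper subset of the crossed diagonals, and the multiset of its odd arcs has fewer than $d+1$ elements, so it cannot equal the edge set of a perfect matching (which always has $d+1$ edges, since the $d$-tile snake graph has $2d+2$ vertices). The paper's own running example makes this concrete: the complete $T$-path with backtracking on $T_3$ corresponds to the perfect matching $\{T_3,T_3,T_9,T_{13}\}$, whereas the reduced path uses only three odd arcs and two even arcs. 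Your claim that ``reducedness of $\alpha$ corresponds to $M$ being a genuine perfect matching with no doubled tile'' is therefore backwards---complete $T$-paths with backtracking correspond to perfectly valid matchings, and it is precisely these that account for matchings using two edges with the same label. The repair is simply to run the bijection with complete $T$-paths throughout (Schiffler's formula holds for either indexing set); with that substitution the rest of your argument, including the tile-by-tile reconstruction and the boundary conventions, goes through, and your alternative inductive route via the Ptolemy relation is indeed essentially the original argument of \cite{MSW}.
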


There is a weight-preserving bijection between the set of perfect matching of $G_{\gamma,T}$ and the set of all (usual) $T$-paths from $i$ to $j$: by adding diagonals representing arcs in $\textup{cross}(\gamma)$ to a~perfect matching $M$, one obtains a complete $T$-path where the edges in $M$ are the odd steps and the diagonals in $\textup{cross}(\gamma)$ are the even steps. For example, adding diagonals to the above perfect matching, we obtain the same complete $T$-path as in Example~\ref{ex:cluster_t_path}:

\begin{center}
\begin{tikzpicture}[scale=0.85]
	\begin{pgfonlayer}{nodelayer}
		\node [style=none] (0) at (-6, 4) {};
		\node [style=none] (1) at (-6, 2) {};
		\node [style=none] (2) at (-4, 2) {};
		\node [style=none] (3) at (-4, 4) {};
		\node [style=label] (4) at (-6, 3) {$T_{10}$};
		\node [style=label] (5) at (-5, 1.5) {$T_9$};
		\node [style=label] (6) at (-4, 3) {$T_4$};
		\node [style=label] (7) at (-5, 4.5) {$T_3$};
		\node [style=none] (8) at (-2, 2) {};
		\node [style=none] (9) at (-2, 4) {};
		\node [style=label] (10) at (-3, 4) {$T_1$};
		\node [style=label] (11) at (-3, 2) {$T_5$};
		\node [style=label] (12) at (-2, 3) {$T_2$};
		\node [style=none] (13) at (0, 4) {};
		\node [style=none] (14) at (0, 2) {};
		\node [style=label] (15) at (-1, 4.5) {$T_{13}$};
		\node [style=label] (16) at (-1, 1.5) {$T_3$};
		\node [style=label] (17) at (0, 3) {$T_6$};
		\node [style=label] (18) at (-5, 3) {$T_5$};
		\node [style=label] (19) at (-3, 3) {$T_3$};
		\node [style=label] (20) at (-1, 3) {$T_1$};
	\end{pgfonlayer}
	\begin{pgfonlayer}{edgelayer}
		\draw (0.center) to (4);
		\draw (4) to (1.center);
		\draw (2.center) to (6);
		\draw (3.center) to (6);
		\draw (2.center) to (11);
		\draw (11) to (8.center);
		\draw (12) to (8.center);
		\draw (12) to (9.center);
		\draw (9.center) to (10);
		\draw (10) to (3.center);
		\draw (13.center) to (17);
		\draw (17) to (14.center);
		\draw [style=blue thick] (9.center) to (13.center);
		\draw [style=blue thick] (8.center) to (14.center);
		\draw [style=blue thick] (0.center) to (3.center);
		\draw [style=blue thick] (1.center) to (2.center);
		\draw [style=red] (0.center) to (18);
		\draw [style=red] (18) to (2.center);
		\draw [style=red] (3.center) to (19);
		\draw [style=red] (19) to (8.center);
		\draw [style=red] (9.center) to (20);
		\draw [style=red] (20) to (14.center);
	\end{pgfonlayer}
\end{tikzpicture}
\end{center}

We hope that a graph-theoretic formula similar to the cluster expansion formula of Musiker, Schiffler, and Williams exists for graph LP algebras from generic trees; this would provide another approach to prove the positivity conjecture for more general clusters of LP algebras from trees. The rough idea is explained in the following example.

\begin{Example} We draw a snake graph associated to the set $\{2,3\}$ of the graph $\Gamma$ in Figure~\ref{fig:example_hypergraph} as follows.
Note that the two edges $1'-2$ and $5'-2$ are considered as one single edge when we take a matching:
\begin{center}
\tikzfig{Snake_Graph_new}
\end{center}

Similar to the cluster case (Theorem~\ref{thm:snake_graph}), the cluster variable $Y_{23}$ is given by the weighted sum of all matchings divided by a monomial given by the elements of the cluster incompatible with $\{2,3\}$.

The following are several matchings of the above snake graph, where vertex $2$ is allowed (but not required) to have 2 adjacent matched edges:
\begin{center}
 \tikzfig{SG01new}\quad\tikzfig{SG02new}

 \tikzfig{SG03new}\quad\tikzfig{SG04new}

\end{center}
Note that these matchings have the same weight as the complete hyper $T$-paths in Example~\ref{ex:hyper_T-path}.
\end{Example}

Observe that in the preceding example, all vertices in the underlying graph $\Gamma$ had degree three or less. When the vertices in $\Gamma$ have higher degree, it is unclear how to draw the snake graphs.

\subsection*{Acknowledgements}

We would like to thank Pavlo Pylyavskyy for suggesting this problem, Trevor Karn for organizing the 2021 Minnesota Combinatorics Working Group, and Kayla Wright and Libby Farrell for generating initial computational data. We thank the anonymous referees for their thoughtful comments, which significantly improved the paper.
This material is based upon work supported by the National Science Foundation under Grant No.~DMS-1439786 while the second author was in residence at the Institute for Computational and Experimental Research in Mathematics in Providence, RI, during the Combinatorial Algebraic Geometry program. The third author was partially supported by NSF Grant No. DMS-1937241.


\begin{thebibliography}{99}
\footnotesize\itemsep=0pt

\bibitem{BMDKTY-18}
Bazier-Matte V., Chapelier-Laget N., Douville G., Mousavand K., Thomas H.,
 Yildrm E., {ABHY} Associahedra and Newton polytopes of ${F}$-polynomials for
 finite type cluster algebras, \href{https://arxiv.org/abs/1808.09986}{arXiv:1808.09986}.

\bibitem{B-17}
Bridgeland T., Scattering diagrams, {H}all algebras and stability conditions,
 \href{https://doi.org/10.14231/2017-027}{\textit{Algebr. Geom.}} \textbf{4} (2017), 523--561, \href{https://arxiv.org/abs/1603.00416}{arXiv:1603.00416}.

\bibitem{BMRRT-06}
Buan A.B., Marsh R., Reineke M., Reiten I., Todorov G., Tilting theory and
 cluster combinatorics, \href{https://doi.org/10.1016/j.aim.2005.06.003}{\textit{Adv. Math.}} \textbf{204} (2006), 572--618,
 \href{https://arxiv.org/abs/math.RT/0402054}{arXiv:math.RT/0402054}.

\bibitem{C-04}
Chapoton F., Enumerative properties of generalized associahedra, \textit{S\'em.
 Lothar. Combin.} \textbf{51} (2004), Art.~B51b, 16~pages,
 \href{https://arxiv.org/abs/math.CO/0401237}{arXiv:math.CO/0401237}.

\bibitem{CFZ-02}
Chapoton F., Fomin S., Zelevinsky A., Polytopal realizations of generalized
 associahedra, \href{https://doi.org/10.4153/CMB-2002-054-1}{\textit{Canad. Math. Bull.}} \textbf{45} (2002), 537--566,
 \href{https://arxiv.org/abs/math.CO/0202004}{arXiv:math.CO/0202004}.

\bibitem{FG-09}
Fock V.V., Goncharov A.B., Cluster ensembles, quantization and the dilogarithm,
 \href{https://doi.org/10.24033/asens.2112}{\textit{Ann. Sci. \'Ec. Norm. Sup\'er.}} \textbf{42} (2009), 865--930,
 \href{https://arxiv.org/abs/math.AG/0311245}{arXiv:math.AG/0311245}.

\bibitem{FG-10}
Fock V.V., Goncharov A.B., Cluster ensembles, quantization and the
 dilogarithm.~{II}. {T}he intertwiner, in Algebra, Arithmetic, and Geometry:
 in Honor of {Y}u.{I}.~{M}anin, {V}ol.~{I}, \textit{Progr. Math.}, Vol.~269,
 \href{https://doi.org/10.1007/978-0-8176-4745-2_15}{Birkh\"auser Boston, Boston}, MA, 2009, 655--673,
 \href{https://arxiv.org/abs/math.QA/0702398}{arXiv:math.QA/0702398}.

\bibitem{FT-18}
Fomin S., Thurston D., Cluster algebras and triangulated surfaces, {P}art~{II}:
 {L}ambda lengths, \href{https://doi.org/10.1090/memo/1223}{\textit{Mem. Amer. Math. Soc.}} \textbf{255} (2018),
 v+97~pages, \href{https://arxiv.org/abs/1210.5569}{arXiv:1210.5569}.

\bibitem{FZ-02}
Fomin S., Zelevinsky A., Cluster algebras.~{I}. {F}oundations, \href{https://doi.org/10.1090/S0894-0347-01-00385-X}{\textit{J.~Amer.
 Math. Soc.}} \textbf{15} (2002), 497--529, \href{https://arxiv.org/abs/1210.5569}{arXiv:1210.5569}.

\bibitem{FH-13}
Fordy A.P., Hone A., Discrete integrable systems and {P}oisson algebras from
 cluster maps, \href{https://doi.org/10.1007/s00220-013-1867-y}{\textit{Comm. Math. Phys.}} \textbf{325} (2014), 527--584,
 \href{https://arxiv.org/abs/1207.6072}{arXiv:1207.6072}.

\bibitem{F-12}
Franco S., Bipartite field theories: from {D}-brane probes to scattering
 amplitudes, \href{https://doi.org/10.1007/JHEP11(2012)141}{\textit{J.~High Energy Phys.}} \textbf{2012} (2012), 141~pages,
 49~pages, \href{https://arxiv.org/abs/1207.0807}{arXiv:1207.0807}.

\bibitem{GSV-05}
Gekhtman M., Shapiro M., Vainshtein A., Cluster algebras and
 {W}eil--{P}etersson forms, \href{https://doi.org/10.1215/S0012-7094-04-12723-X}{\textit{Duke Math.~J.}} \textbf{127} (2005),
 291--311, correction,
 \href{https://doi.org/10.1215/S0012-7094-07-13925-5}{\textit{Duke Math.~J.}}
 \textbf{139} (2007), 407--409, \href{https://arxiv.org/abs/math.QA/0309138}{arXiv:math.QA/0309138}.

\bibitem{GSV-10}
Gekhtman M., Shapiro M., Vainshtein A., Cluster algebras and {P}oisson
 geometry, \textit{Math. Surveys Monogr.}, Vol.~167, \href{https://doi.org/10.1090/surv/167}{Amer. Math. Soc.},
 Providence, RI, 2010.

\bibitem{GHK-15}
Gross M., Hacking P., Keel S., Birational geometry of cluster algebras,
 \href{https://doi.org/10.14231/AG-2015-007}{\textit{Algebr. Geom.}} \textbf{2} (2015), 137--175, \href{https://arxiv.org/abs/1309.2573}{arXiv:1309.2573}.

\bibitem{GM-15}
Gunawan E., Musiker G., {$T$}-path formula and atomic bases for cluster
 algebras of type~{$D$}, \href{https://doi.org/10.3842/SIGMA.2015.060}{\textit{SIGMA}} \textbf{11} (2015), 060, 46~pages,
 \href{https://arxiv.org/abs/1409.3610}{arXiv:1409.3610}.

\bibitem{IT-09}
Ingalls C., Thomas H., Noncrossing partitions and representations of quivers,
 \href{https://doi.org/10.1112/S0010437X09004023}{\textit{Compos. Math.}} \textbf{145} (2009), 1533--1562,
 \href{https://arxiv.org/abs/math.RT/0612219}{arXiv:math.RT/0612219}.

\bibitem{K-10}
Keller B., Cluster algebras, quiver representations and triangulated
 categories, in Triangulated Categories, \textit{London Math. Soc. Lecture
 Note Ser.}, Vol.~375, \href{https://doi.org/10.1017/CBO9781139107075.004}{Cambridge University Press}, Cambridge, 2010, 76--160,
 \href{https://arxiv.org/abs/0807.1960}{arXiv:0807.1960}.

\bibitem{K-10-II}
Keller B., The periodicity conjecture for pairs of {D}ynkin diagrams,
 \href{https://doi.org/10.4007/annals.2013.177.1.3}{\textit{Ann. of Math.}} \textbf{177} (2013), 111--170, \href{https://arxiv.org/abs/1001.1531}{arXiv:1001.1531}.

\bibitem{LP-12}
Lam T., Pylyavskyy P., Laurent phenomenon algebras, \href{https://doi.org/10.4310/CJM.2016.v4.n1.a2}{\textit{Camb. J.~Math.}}
 \textbf{4} (2016), 121--162, \href{https://arxiv.org/abs/1206.2611}{arXiv:1206.2611}.

\bibitem{LP-16}
Lam T., Pylyavskyy P., Linear {L}aurent phenomenon algebras, \href{https://doi.org/10.1093/imrn/rnv237}{\textit{Int. Math.
 Res. Not.}} \textbf{2016} (2016), 3163--3203, \href{https://arxiv.org/abs/1206.2612}{arXiv:1206.2612}.

\bibitem{MOZ-21}
Musiker G., Ovenhouse N., Zhang S.W., An expansion formula for decorated
 super-{T}eichm\"uller spaces, \href{https://doi.org/10.3842/SIGMA.2021.080}{\textit{SIGMA}} \textbf{17} (2021), 080,
 34~pages, \href{https://arxiv.org/abs/2102.09143}{arXiv:2102.09143}.

\bibitem{MS-09}
Musiker G., Schiffler R., Cluster expansion formulas and perfect matchings,
 \href{https://doi.org/10.1007/s10801-009-0210-3}{\textit{J.~Algebraic Combin.}} \textbf{32} (2010), 187--209,
 \href{https://arxiv.org/abs/0810.3638}{arXiv:0810.3638}.

\bibitem{MSW}
Musiker G., Schiffler R., Williams L., Positivity for cluster algebras from
 surfaces, \href{https://doi.org/10.1016/j.aim.2011.04.018}{\textit{Adv. Math.}} \textbf{227} (2011), 2241--2308,
 \href{https://arxiv.org/abs/0906.0748}{arXiv:0906.0748}.

\bibitem{R-10}
Reiten I., Tilting theory and quasitilted algebras, \textit{Doc. Math.}
 \textbf{3} (1998), Extra Vol.~II, 109--120, \href{https://arxiv.org/abs/1012.6014}{arXiv:1012.6014}.

\bibitem{S-08}
Schiffler R., A cluster expansion formula ({$A_n$} case), \href{https://doi.org/10.37236/788}{\textit{Electron.
 J.~Combin.}} \textbf{15} (2008), 64, 9~pages, \href{https://arxiv.org/abs/math.RT/0611956}{arXiv:math.RT/0611956}.

\bibitem{S-10}
Schiffler R., On cluster algebras arising from unpunctured surfaces.~{II},
 \href{https://doi.org/10.1016/j.aim.2009.10.015}{\textit{Adv. Math.}} \textbf{223} (2010), 1885--1923, \href{https://arxiv.org/abs/0809.2593}{arXiv:0809.2593}.

\bibitem{ST-09}
Schiffler R., Thomas H., On cluster algebras arising from unpunctured surfaces,
 \href{https://doi.org/10.1093/imrn/rnp047}{\textit{Int. Math. Res. Not.}} \textbf{2009} (2009), 3160--3189,
 \href{https://arxiv.org/abs/0712.4131}{arXiv:0712.4131}.

\end{thebibliography}

\pdfbookmark[1]{References}{ref}
\LastPageEnding

\end{document}